\newtheorem{theorem}{Theorem}[section]
\newtheorem{lemma}[theorem]{Lemma}
\newtheorem{proposition}[theorem]{Proposition}
\newtheorem{corollary}[theorem]{Corollary}
\newtheorem{definition}{Definition}[section]
\newtheorem{remark}{Remark}[section]
\newtheorem{example}{Example}[section]
\DeclareFontFamily{U}{MnSymbolC}{}
\DeclareSymbolFont{MnSyC}{U}{MnSymbolC}{m}{n}
\DeclareFontShape{U}{MnSymbolC}{m}{n}{
    <-6>  MnSymbolC5
   <6-7>  MnSymbolC6
   <7-8>  MnSymbolC7
   <8-9>  MnSymbolC8
   <9-10> MnSymbolC9
  <10-12> MnSymbolC10
  <12->   MnSymbolC12}{}
\DeclareMathSymbol{\lefthook}{\mathbin}{MnSyC}{'270}
\title{Complex Engel Structures}
\date{}
\begin{document}
\maketitle
\begin{center}{\bf Abstract} \end{center}
\begin{center}
\begin{minipage}{130mm}  
We study the geometry of Engel structures, which are 2-plane fields on 4-manifolds satisfying a generic condition, that are compatible with other geometric structures. A complex Engel structure is an Engel 2-plane field on a complex surface for which the 2-planes are complex lines. We solve the equivalence problems for complex Engel structures and use the resulting structure equations to classify homogeneous complex Engel structures. This allows us to determine all compact, homogeneous examples. Compact manifolds that support homogeneous complex Engel structures are  diffeomorphic to $S^1\times SU(2)$ or quotients of  $\mathbb{C}^2$, $S^1\times SU(2)$, $S^1\times G$ or $H$ by co-compact lattices, where $G$ is the connected and simply-connected Lie group with Lie algebra $\mathfrak{sl}_2(\mathbb{R})$ and $H$ is a solvable Lie group. 

{\bf Key words and phrases:} complex Engel structure, structure equation, homogeneous manifold.

{\bf 1991 Mathematics Subject Classification:} 58H99.

\end{minipage}
\end{center}

\section{Introduction}
A \emph{distribution} is a subbundle $D\subset TM$ of the tangent bundle of a manifold $M$. We will consider certain distributions with special properties, for example, distributions with some integrability conditions.  Given a distribution of rank $n$ on a manifold $M^m$, if, for each point of $M$, there exists a coordinate neighborhood $U$ and local coordinates $x_1,x_2,\cdots,x_m$ such that $\tfrac{\partial}{\partial x_i}$, $i = 1,\cdots,n$ forms a local basis for the distribution on $U$, then the distribution is said to be \emph{completely integrable}. Besides complete integrability, one can consider partially integrable distributions. An extreme condition is to be nowhere integrable, i.e., for every $p\in M$, there exist $X, Y$, sections of $D$, such that $[X, Y]_p$ is not a section of $D$. For example, contact structures are nowhere integrable distributions on odd dimensional manifolds. The study of contact structures \cite{MR2194671} usually involves interplay between geometry, topology and dynamics. Contact structures play an important role in the study of low-dimensional topology.

 We will study \emph{Engel structures}, which are certain non-integrable distributions defined on 4-manifolds. We will see that, locally, all Engel structures are isomorphic but the global theory of Engel structures is not trivial. A 4-manifold can carry many nonisomorphic Engel structures \cite{MR1350504}. There are relations between contact structures on 3-dimensional manfiolds and Engel structures. For example, V. Gershkovich \cite{MR1350504} proved that each Engel manifold carries a canonical one-dimensional foliation and an Engel structure defines a contact distribution on any three-dimensional submanifold transversal to the canonical foliation.

We will solve the equivalence problem for complex Engel structures and present the classification of compact quotients of homogeneous complex Engel structures. Engel structures (to be defined below) can be characterized in terms of the derived system construction.
 
 \begin{proposition} \cite{MR1083148}\label{pro:1}
 Given a Pfaffian system $I$, there exists a bundle map $\delta : I\rightarrow\Lambda^2(T^*M/I)$ that satisfies $\delta\omega \equiv d\omega\mod(I)$ for all $\omega\in\Gamma(I)$.
 \end{proposition}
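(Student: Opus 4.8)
The plan is to define $\delta$ directly as the composition of exterior differentiation with the natural projection onto $\Lambda^2(T^*M/I)$, and then to verify that this composition descends to a genuine bundle map even though $d$ itself is not tensorial. Concretely, the quotient map $\pi_1 : T^*M \to T^*M/I$ induces a surjection $\pi : \Lambda^2 T^*M \to \Lambda^2(T^*M/I)$ whose kernel is exactly the subbundle $I \wedge T^*M$ of $2$-forms divisible by a section of $I$; reducing a $2$-form $\bmod\,(I)$ means nothing other than applying $\pi$. For $\omega \in \Gamma(I)$ I would set $\delta\omega := \pi(d\omega)$, so that $\delta\omega \equiv d\omega \bmod (I)$ holds by construction. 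The content of the proposition is then that $\delta$, a priori only an $\mathbb{R}$-linear operator on sections, is in fact induced by a bundle homomorphism $I \to \Lambda^2(T^*M/I)$.

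First I would record that $\omega \mapsto \pi(d\omega)$ is additive, which is immediate from the linearity of $d$ and $\pi$. The essential step is to check $C^\infty(M)$-linearity. Given $f \in C^\infty(M)$ and $\omega \in \Gamma(I)$, the Leibniz rule gives $d(f\omega) = df \wedge \omega + f\,d\omega$. Because $\omega$ is a section of $I$, the term $df \wedge \omega$ lies in $I \wedge T^*M = \ker\pi$, so applying $\pi$ yields $\pi(d(f\omega)) = f\,\pi(d\omega)$, i.e. $\delta(f\omega) = f\,\delta\omega$. This is precisely the point where reduction modulo $I$ is indispensable: the non-tensorial first-order term $df \wedge \omega$ is annihilated exactly because it is divisible by $\omega \in I$.

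Having established $C^\infty(M)$-linearity, I would invoke the standard tensoriality principle that a $C^\infty(M)$-linear map between spaces of smooth sections of vector bundles is induced by a bundle map, so that $(\delta\omega)_p$ depends only on $\omega_p$. Equivalently, to keep the argument self-contained, I would argue directly that if $\omega \in \Gamma(I)$ vanishes at $p$ then $\pi(d\omega)_p = 0$: writing $\omega = \sum_a f_a\,\theta^a$ in a local coframe $\{\theta^a\}$ of $I$ with $f_a(p)=0$, one has $d\omega = \sum_a df_a \wedge \theta^a + \sum_a f_a\,d\theta^a$, and both sums vanish at $p$ after applying $\pi$ (the first because each $\theta^a \in I$, the second because $f_a(p)=0$). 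This gives well-definedness of $\delta$ on fibres and completes the construction.

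I anticipate no serious obstacle, since the result is a formal consequence of the Leibniz rule together with the correct identification of $\ker\pi$. The only point requiring genuine care is the bookkeeping around the symbol ``$\bmod (I)$'': it must be read as the full algebraic ideal generated by $I$, so that $\ker\pi$ is all of $I \wedge T^*M$ rather than some smaller space, since it is exactly this identification that guarantees $df \wedge \omega$ is killed for \emph{every} $f$ and \emph{every} $\omega \in \Gamma(I)$.
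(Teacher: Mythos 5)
Your proof is correct: the paper states this proposition without proof, citing Bryant--Chern--Gardner--Goldschmidt--Griffiths, and your argument---defining $\delta\omega = \pi(d\omega)$, using the Leibniz rule $d(f\omega) = df\wedge\omega + f\,d\omega$ together with $df\wedge\omega \in I\wedge T^*M = \ker\pi$ to establish $C^\infty(M)$-linearity, and concluding tensoriality---is exactly the standard argument underlying that citation. Your care in identifying $\ker\pi$ with the full algebraic ideal $I\wedge T^*M$, and the self-contained pointwise check that $\omega_p = 0$ implies $\pi(d\omega)_p = 0$, are precisely the right points to make explicit.
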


 \begin{definition}
 By Proposition \ref{pro:1}, we have a bundle map $\delta$. Set $I^{(1)} = \ker \delta$ and call $I^{(1)}$ \emph{the first derived system}. 
  Continuing with this construction, we can get a filtration
  \[I^{(k)}\subset \cdots \subset I^{(2)}\subset I^{(1)}\subset I^{(0)} = I,    \]
  defined inductively by
  \[I^{(k+1)} = (I^{(k)})^{(1)}\, .\]
$I^{(k)}$ is called \emph{the $kth$ derived system}. 
 \end{definition}
 Now we present the definition and characterization of Engel structures.
 
\begin{definition}[Engel Structure]
Given a 4-manifold $M$ and a Pfaffian system $I\subset T^*M$, an \emph{Engel structure} is a sub-bundle $D = I^\perp$ of the tangent bundle of $M$ that satisfies: $(1)$ $I$ is of rank 2, $2$  $I^{(1)}$ is of rank 1 and $3$ $I^{(2)} = 0$.  A manifold endowed with an Engel structure $D = I^\perp$ is called an \emph{Engel manifold}. 
\end{definition} 

\begin{definition}
Given an ideal $\mathcal I$ generated by a Pffafian system $I$, a vector field $\xi$ is called a \emph{Cauchy characteristic vector field} of  $\mathcal I$ if $\xi\lefthook \mathcal I \subset \mathcal I$. At a point $x\in M$, the set of Cauchy characteristic vector fields is
\[A(I)_x = \{\xi_x\in T_xM | \xi_x\lefthook\mathcal I_x \subset\mathcal I_x\} \subset I^\bot\]
and the \emph{retracting space} or \emph{Cartan system} is defined to be
\[C(I)_x = A(I)_x^\bot \subset T_x^*M\, .\]
\end{definition}
By the definition of Engel structure $I^\bot$, there is a canonical flag of sub-bundles
\[0\subset I^{(1)}\subset I\subset C(I^{(1)}) \subset T^*M\, .\]
V. Gershkovich \cite{MR1350504} proved the following theorem which can also be found in \cite{MR1452539}.
\begin{theorem}
If an orientable 4-manifold admits an orientable Engel structure, then it has trivial tangent bundle.
\end{theorem}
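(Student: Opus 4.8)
\emph{Proof proposal.} The plan is to turn the algebraic hypotheses into the triviality of four line bundles. First I would dualize the canonical cotangent flag $0\subset I^{(1)}\subset I\subset C(I^{(1)})\subset T^*M$ by taking annihilators, producing a flag of tangent subbundles
\[
0\subset L\subset D\subset E\subset TM,\qquad L = C(I^{(1)})^\perp,\ \ D = I^\perp,\ \ E = (I^{(1)})^\perp,
\]
of ranks $1,2,3,4$; here $L$ is the Cauchy characteristic line field $A(I^{(1)})$ and $E=D+[D,D]$ is the associated even-contact (rank $3$) distribution. Since $M$ is a manifold, each inclusion splits after choosing a Riemannian metric, so $TM\cong L\oplus(D/L)\oplus(E/D)\oplus(TM/E)$ as a direct sum of four real line bundles. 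Because a real line bundle over a paracompact space is trivial if and only if its first Stiefel--Whitney class vanishes, it then suffices to prove that each of $L,\ D/L,\ E/D,\ TM/E$ has vanishing $w_1$.

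The key step is to extract two natural isomorphisms from the Engel bracket structure. I would first verify that the Levi form $\Lambda^2 D\to TM/D$, $X\wedge Y\mapsto[X,Y]\bmod D$, is $C^\infty(M)$-bilinear (tensoriality follows from $[fX,Y]\equiv f[X,Y]\bmod D$) and that, by the Engel condition $E=D+[D,D]$ of constant rank $3$, it is a nowhere-vanishing map of the line bundle $\Lambda^2 D$ onto $E/D$; this gives $\Lambda^2 D\cong E/D$, hence $\det D=L\otimes(D/L)\cong E/D$. Second, the analogous bracket map $\Lambda^2 E\to TM/E$ is tensorial and its associated map $E\to E^*\otimes(TM/E)$ has kernel exactly the Cauchy characteristic $L$ (this is the even-contact condition). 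It therefore descends to a nondegenerate $(TM/E)$-valued alternating form on $E/L$, yielding the isomorphism $\Lambda^2(E/L)\cong TM/E$, i.e.\ $(D/L)\otimes(E/D)\cong TM/E$.

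Writing $a,b,c,e$ for the classes $w_1(L),w_1(D/L),w_1(E/D),w_1(TM/E)$ in $H^1(M;\mathbb{Z}/2)$, the two isomorphisms read $c=a+b$ and $e=b+c$. The orientability of the Engel structure means $D$ is orientable, so $\det D$ is trivial and $a+b=0$, forcing $c=0$; the orientability of $M$ gives $a+b+c+e=0$, whence $e=0$, and then $e=b+c=b$ forces $b=0$ and therefore $a=0$. Thus all four classes vanish, every graded line bundle is trivial, and $TM$ is trivial. I expect the main obstacle to be the second step: checking that the even-contact bracket pairing is genuinely tensorial and that its kernel is precisely $L$, so that the descended form on $E/L$ is nondegenerate and defines the isomorphism with $TM/E$. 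Once both isomorphisms are in hand, the splitting and the $\mathbb{Z}/2$ bookkeeping are routine.
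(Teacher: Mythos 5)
Your proof is correct, but note that the paper itself never proves this theorem: it is quoted as a known result of Gershkovich \cite{MR1350504}, with \cite{MR1452539} as an alternate source, so there is no internal proof to compare against. Your argument is essentially the standard one from those references: split $TM$ along the canonical flag $L\subset D\subset E\subset TM$ into four line bundles, use the tensorial bracket pairings to get $\det D\cong E/D$ and $\Lambda^2(E/L)\cong TM/E$, and then do the $w_1$ bookkeeping, which you carry out correctly ($c=a+b$, $e=b+c$, and the two orientability hypotheses force $a=b=c=e=0$, so all four line bundles and hence $TM$ are trivial). Both tensoriality checks you flag as the ``main obstacle'' do go through: the kernel of $E\to E^*\otimes(TM/E)$ is exactly the characteristic line because $I^{(2)}=0$ says $d\theta$ restricted to $E=\ker\theta$ is nonzero, hence of rank exactly $2$ on the rank-$3$ bundle $E$. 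The only fact you import without proof is $L\subset D$, which you correctly read off from the paper's stated canonical flag $I\subset C(I^{(1)})$; if you wanted the argument self-contained, the quick justification is that $L\not\subset D$ at a point would give $E=D\oplus L$ nearby, whence $[E,E]\subset E$, contradicting the even-contact condition $[E,E]=TM$.
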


T. Vogel \cite{MR2480602} proved the converse of the above theorem:
 \begin{theorem}
Every parallelizable 4-manifold admits an orientable Engel structure.
\end{theorem}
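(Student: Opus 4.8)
The plan is to reduce the existence of an Engel structure to two separate problems: first the existence of an ambient \emph{even contact structure}, and then the promotion of that structure to a full Engel flag. Recall that an Engel distribution $D$ produces the canonical flag $W \subset D \subset E \subset TM$, where $E = D + [D,D]$ is a rank-$3$ even contact structure, $W \subset D$ is its characteristic line field (the kernel of $d\alpha$ restricted to $E = \ker\alpha$), and $[E,E] = TM$. Conversely, an even contact structure $E$ together with a rank-$2$ field $D$ satisfying $W \subset D \subset E$ and $[D,D] = E$ recovers an Engel structure. Thus the data of an Engel structure is equivalent to the data of an even contact structure $E$ together with a line field in the rank-$2$ bundle $E/W$ whose infinitesimal rotation along the flow of $W$ never vanishes. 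I would phrase the theorem in exactly these terms before starting the construction, dualizing the flag $I^{(1)}\subset I\subset C(I^{(1)})\subset T^*M$ already recorded above.

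First I would construct the even contact structure $E$. The differential relation defining even contact structures is open and ample, so it obeys an $h$-principle, and existence of $E$ is governed by a purely homotopy-theoretic (formal) condition on $TM$. Because $M$ is parallelizable, a global frame supplies all the formal data at once, so this condition is satisfied and $E$ exists; moreover the frame orients $E$ and its characteristic line field $W$, which will ultimately yield the orientability of the final structure.

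The substantive step is to promote $E$ to an Engel structure, and here one cannot simply invoke a convex-integration argument: Engel structures are rigid and do not satisfy a naive $h$-principle, so the rotating line field in $E/W$ must be built by hand with control over the dynamics of $W$. Since $M$ is parallelizable we have $\chi(M) = 0$, and by Asimov's theorem $M$ then admits a round-handle decomposition $M = \bigcup_i (S^1 \times H_i)$. I would arrange $E$ on each round handle so that its characteristic foliation $W$ is tangent to the $S^1$-factor, i.e.\ so that $W$ is organized by a circle fibration over each handle core. On the model handle $S^1 \times H_i$ one can then write down an explicit line field in $E/W$ that turns monotonically (with nonzero angular velocity) as one traverses the $S^1$-direction and closes up after a rotation by a multiple of $\pi$; this is precisely a local Engel structure.

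The main obstacle is the gluing. Along the boundaries where adjacent round handles meet, the locally constructed rotating line fields and their total rotation numbers must be matched, and the bracket-generating (nowhere-vanishing angular speed) condition must be preserved across the interface --- exactly the place where the rigidity of Engel structures bites. I expect the bulk of the work to lie here, using the global parallelization to fix a single consistent framing of $E/W$ so that the rotation numbers on neighbouring handles agree and the turning can be interpolated monotonically. Once the models are matched, the resulting $D$ satisfies $[D,D]=E$ and $[E,E]=TM$ everywhere, so $D$ is an Engel structure, and the coherent framing makes $W$, $D$ and $E$ all orientable, giving the desired orientable Engel structure.
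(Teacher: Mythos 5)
You should know first that the paper does not prove this statement at all: it is quoted verbatim from T.~Vogel's paper (\emph{Existence of Engel structures}, Ann.\ of Math.\ 169 (2009)), cited as a known converse to Gershkovich's parallelizability theorem. So the only fair comparison is with Vogel's published proof, and your outline does reproduce its architecture accurately: the reformulation of an Engel structure as an even contact structure $E$ with characteristic line field $W$ plus a line field in $E/W$ rotating with nowhere-vanishing speed along $W$, the use of $\chi(M)=0$ (forced by parallelizability) to invoke Asimov's round-handle decomposition $M=\bigcup_i(S^1\times H_i)$, explicit rotating models on round handles, and the identification of the gluing as the locus of rigidity.

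Nevertheless, as a proof the proposal has a genuine gap, and it is exactly the step you flag with ``I expect the bulk of the work to lie here.'' The gluing is not a technical loose end; it is the entire mathematical content of the theorem --- everything before it (McDuff's $h$-principle for even contact structures, Asimov's theorem, the local models) was available long before 2009. To close it one must control, on the $3$-dimensional boundaries where round handles are attached, the contact structure induced on hypersurfaces transverse to $W$ together with its Legendrian intersection line field, show that the models on handles of each index $0$ through $3$ can be chosen so these boundary data match, and realize arbitrary rotation numbers of the line field relative to a fixed trivialization; a global parallelization gives you a common framing of $E/W$ but does not by itself make the rotation numbers on adjacent handles agree or keep the angular speed nonzero across the interface. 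There is also a structural problem with your first step: fixing $E$ globally by the $h$-principle and only then building $D$ does not work as stated, because a generic even contact structure produced by convex integration has characteristic foliation with uncontrolled dynamics, and nothing guarantees it can be isotoped so that $W$ becomes tangent to the $S^1$-factors of a prescribed round-handle decomposition. Vogel avoids this by constructing the Engel structure (and hence $E$ and $W$ simultaneously) handle by handle, with the even contact $h$-principle playing no load-bearing role. So your text is a correct road map of the known proof, but the decisive interpolation-and-matching argument is asserted rather than carried out.
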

Thus for an orientable 4-manifold, parallelizability is equivalent to the existence of an orientable Engel structure. This is a global characterization of manifolds that support orientable Engel structures. Locally, we have the following \emph{Engel normal form} \cite{MR1083148}, which implies that there is no local invariant for Engel structures, i.e., all Engel structures are locally equivalent.
\begin{theorem}[Engel normal form] \label{the:1}
Let $I$ be a Pfaffian system on $M^4$ such that $I^\perp\subset TM$ is an Engel structure. Then every point of $M$ has an open neighborhood $U$ on which there exists local coordinates  $(x, y_0, y_1, y_2): U\rightarrow \mathbb{R}^4$ such that
 \[I|_{U} =\{dy_0-y_1 dx, dy_1 - y_2 dx\}\, . \]
\end{theorem}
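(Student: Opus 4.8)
The plan is to reduce the four-dimensional problem to the three-dimensional Darboux normal form for a contact form, using the Cauchy characteristics of the first derived system, and then to recover the remaining coordinate from the second generator of $I$.

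First I would choose a local generator of the smaller systems. Since $I^{(1)}$ has rank $1$, write $I^{(1)} = \{\theta\}$ on a neighborhood. The condition $I^{(2)} = (I^{(1)})^{(1)} = 0$ says exactly that $\delta$ is injective on $I^{(1)}$, i.e. $d\theta \not\equiv 0 \pmod{\theta}$, which is the nondegeneracy $\theta \wedge d\theta \neq 0$. On the rank-$3$ bundle $\ker\theta$ the $2$-form $d\theta$ then has constant rank $2$, so its kernel line $W = \ker\theta \cap \ker d\theta$ is precisely the space $A(I^{(1)})$ of Cauchy characteristic vectors, and $C(I^{(1)}) = W^\perp$ has rank $3$. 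This reproduces the canonical flag $0 \subset I^{(1)} \subset I \subset C(I^{(1)}) \subset T^*M$ asserted above. Being a line field, $W$ is automatically integrable, and for $\xi \in \Gamma(W)$ one has $\xi \lefthook \theta = 0$ and $\xi \lefthook d\theta = 0$, so $\mathcal{L}_\xi \theta = 0$ as well.

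Next I would perform the Cauchy characteristic reduction. After shrinking $U$, the local leaf space $N = U/W$ of the foliation by $W$ is a smooth $3$-manifold with projection $\pi \colon U \to N$. Because $W$ consists of the characteristics of $\theta$, the form $\theta$ is basic, so $\theta = \pi^*\bar\theta$ for a $1$-form $\bar\theta$ on $N$; moreover $\bar\theta \wedge d\bar\theta \neq 0$, i.e. $\bar\theta$ is a contact form. By the Darboux theorem for contact forms there are coordinates $(x, y_0, y_1)$ on $N$ with $\bar\theta = dy_0 - y_1\,dx$. Pulling back along $\pi$ (and reusing the names $x, y_0, y_1$ for the now $W$-invariant functions $\pi^*x$, etc.) gives $\theta = dy_0 - y_1\,dx$ on all of $U$. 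Completing by any function $t$ transverse to $C(I^{(1)})$ yields a coordinate system $(x, y_0, y_1, t)$ on $U$, in which $d\theta = dx \wedge dy_1$ and hence $C(I^{(1)}) = \{\theta, dx, dy_1\} = \{dx, dy_0, dy_1\}$.

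Finally I would recover $y_2$ from the second generator. By the canonical flag $I \subset C(I^{(1)})$, so a second generator of $I$ has the form $\omega \equiv p\,dx + q\,dy_1 \pmod{\theta}$ for functions $p, q$. The requirement that $I^{(1)}$ have rank exactly $1$ means $\omega \notin I^{(1)}$, i.e. $d\omega \wedge \theta \wedge \omega \neq 0$; a direct expansion shows this $4$-form equals $(p\,\partial_t q - q\,\partial_t p)\,dx \wedge dy_0 \wedge dy_1 \wedge dt$, so the condition is
\[
q \neq 0 \qquad\text{and}\qquad \partial_t\!\left(\tfrac{p}{q}\right) \neq 0 .
\]
(If $q = 0$ then $I = \{\theta, dx\} = \{dy_0, dx\}$ is Frobenius-integrable, forcing $I^{(1)} = I$ of rank $2$, a contradiction.) Setting $y_2 := -p/q$, the inequality $\partial_t y_2 \neq 0$ shows that $(x, y_0, y_1, y_2)$ is a valid coordinate system, and $\omega = q\,(dy_1 - y_2\,dx)$. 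Rescaling $\omega$ and discarding the $\theta$-multiple leaves $I$ unchanged, so $I|_U = \{dy_0 - y_1\,dx,\ dy_1 - y_2\,dx\}$, as claimed.

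The main obstacle is the reduction step: one must justify that the $W$-leaf space is locally a smooth manifold and, crucially, that $\theta$ is basic and contact there, so that the contact Darboux theorem can be invoked to pin $\theta$ down to $dy_0 - y_1\,dx$ exactly (not merely up to a conformal factor). Once this is in place, the final coordinate $y_2$ emerges from a short computation, the Engel nondegeneracy $\partial_t(p/q) \neq 0$ being exactly what guarantees it is an independent coordinate. One could instead carry out all three integrations directly on $U$ via Pfaff's theorem and avoid forming the quotient, at the cost of heavier bookkeeping.
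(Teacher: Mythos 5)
The paper does not actually prove Theorem \ref{the:1}; it imports it from \cite{MR1083148}, so your argument can only be measured against the standard proof, whose route you follow: Pfaff--Darboux applied to the derived system $I^{(1)}$, Cauchy-characteristic reduction to a contact $3$-manifold, then recovery of $y_2$ from the second generator. Within that route there is one repairable imprecision and one genuine gap. The imprecision: for $\xi$ spanning the characteristic line $W$ you assert $\xi\lefthook d\theta=0$, but the characteristic condition only gives $\xi\lefthook d\theta\equiv 0 \bmod \theta$, and for a generic choice of scale one genuinely has $d\theta\wedge d\theta\neq 0$, so that $\ker\theta\cap\ker d\theta=0$ (rescale the model form $dy_0-y_1\,dx$ by $e^{y_2}$ to see this). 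Hence in general $\mathcal{L}_\xi\theta=f\theta$ with $f\not\equiv 0$, and $\theta$ becomes basic only after multiplication by an integrating factor $g$ solving $\xi g+fg=0$ along the $W$-flow. This is exactly the standard Cauchy-characteristic reduction, so your conclusion stands, but the lemma you invoke for it is false as stated; note also that since $I$ is a Pfaffian system only the span of $\theta$ matters, so pinning $\bar\theta$ down exactly rather than conformally --- the point you single out as the crux --- is less delicate than you suggest.

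The genuine gap is the deduction ``$q\neq 0$.'' The Engel condition you correctly compute, $q\,\partial_t p-p\,\partial_t q\neq 0$, does not imply $q\neq 0$ at the given point, and your parenthetical argument only excludes $q$ vanishing \emph{identically}; pointwise vanishing really occurs. For example, $I=\{dy_0-y_1\,dx,\ dx+t\,dy_1\}$ is Engel on all of $\mathbb{R}^4$: here $d\theta=dx\wedge dy_1\equiv 0\bmod I$ because $dx\equiv -t\,dy_1 \bmod \omega$, while $d\omega\wedge\theta\wedge\omega=dt\wedge dy_1\wedge dy_0\wedge dx\neq 0$ everywhere; yet $q=t$ vanishes along the hypersurface $t=0$, where your $y_2=-p/q$ is undefined and the construction as written fails. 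The repair is short but must be said: at a point where $q=0$ one has $p\neq 0$, and the contactomorphism $(x,y_0,y_1)\mapsto(y_1,\ y_0-xy_1,\ -x)$ preserves $dy_0-y_1\,dx$ exactly while sending $(dx,dy_1)$ to $(-dY_1,dX)$, hence turning the coefficient pair $(p,q)$ into $(q,-p)$; after this change of Darboux chart the $dy_1$-coefficient is nonzero at the base point and your final step goes through verbatim. With these two patches the proof is complete and is essentially the argument of the cited reference.
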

In this paper, we will consider complex Engel structures.
\begin{definition}
Given a complex manifold $(M, J, I)$ with an Engel structure $D = I^\perp$, if $I^\bot\subset TM$ is a complex line field, the Engel structure is called a \emph{complex Engel structure}.
\end{definition}
 
\section{Geometry of Complex Engel Structures}

\noindent Let $J$ be the complex structure on the underlying manifold $M$. Choose a  local $J$-complex coframing $(\omega_1, \omega_2)$ on an open set $U\subset M$ such that
\begin{enumerate}
\item $\omega_1, \omega_2$ are of $J$-type (1,0)
\item $\omega_2 = 0$ defines $I^\perp$ on $U$
\end{enumerate}
\noindent Then $(\omega_1, \omega_2)$ is called a \emph{0-adapted coframing} for the Engel structure.
\begin{lemma}
The $0$-adapted coframings are the sections of a $G$-structure on $M$, where $G\subset GL(2, \mathbb{C})$ is the 3-dimensional complex subgroup
\[G = \left\{\left.\left(\begin{array}{cc}
a & b\\
0& c
\end{array}
\right)\right| a,b,c\in\mathbb{C} \text{ and } a, c\neq 0\right\}\, .\]
\end{lemma}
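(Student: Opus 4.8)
The plan is to realize the collection of $0$-adapted coframings as a reduction, to the subgroup $G$, of the structure group of a natural principal $GL(2,\mathbb{C})$-bundle over $M$. Let $\mathcal{F}\to M$ be the bundle whose fiber over $p$ consists of all ordered $\mathbb{C}$-bases $(\omega_1,\omega_2)$ of the $(1,0)$-cotangent space $T^{*(1,0)}_pM$; a point of $\mathcal{F}$ is thus a $\mathbb{C}$-linear $(1,0)$-coframing, and $GL(2,\mathbb{C})$ acts on $\mathcal{F}$ by $\mathbb{C}$-linear changes of coframing. I would let $\mathcal{B}\subset\mathcal{F}$ denote the subset of $0$-adapted coframings and prove that $\mathcal{B}$ is a principal $G$-subbundle of $\mathcal{F}$; this is exactly the statement that the $0$-adapted coframings are the sections of a $G$-structure on $M$.

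First I would verify that $0$-adapted coframings exist over a neighborhood of every point, so that $\mathcal{B}$ surjects onto $M$. Since $D=I^\perp$ is assumed to be a complex line field, it is $J$-invariant, and hence so is its real annihilator $I$. Consequently $I\otimes\mathbb{C}$ decomposes into its $(1,0)$- and $(0,1)$-parts, and $I^{(1,0)}=(I\otimes\mathbb{C})\cap T^{*(1,0)}M$ is a complex line subbundle of $T^{*(1,0)}M$. Over a small open set I would choose a nowhere-vanishing section $\omega_2$ of $I^{(1,0)}$ and complete it to a $(1,0)$-coframing $(\omega_1,\omega_2)$. Because $\omega_2$ and its conjugate $\bar\omega_2$ together span $I\otimes\mathbb{C}$, a real tangent vector satisfies $\omega_2(v)=0$ if and only if it is annihilated by all of $I$, that is, if and only if $v\in I^\perp=D$; hence $\omega_2=0$ cuts out $D$ and $(\omega_1,\omega_2)$ is $0$-adapted.

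The computational core is to identify the transition functions between $0$-adapted coframings. Suppose $(\omega_1,\omega_2)$ and $(\tilde\omega_1,\tilde\omega_2)$ are both $0$-adapted over $U$; writing them as columns $\omega=(\omega_1,\omega_2)^{T}$ and $\tilde\omega=(\tilde\omega_1,\tilde\omega_2)^{T}$, there is a unique $GL(2,\mathbb{C})$-valued function $M$ with $\tilde\omega=M\omega$. Since $\tilde\omega_2=0$ and $\omega_2=0$ define the same complex line field $D$, the forms $\tilde\omega_2$ and $\omega_2$ are proportional, so the coefficient of $\omega_1$ in $\tilde\omega_2$ vanishes; that is, the lower-left entry $M_{21}=0$. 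Setting $a=M_{11}$, $b=M_{12}$, $c=M_{22}$, invertibility of $M$ forces $a,c\neq0$, and therefore $M=\left(\begin{smallmatrix}a&b\\0&c\end{smallmatrix}\right)\in G$. Conversely, if $(\omega_1,\omega_2)$ is $0$-adapted and $g\in G$, then the new second form is a nonzero multiple of $\omega_2$ and still defines $D$, while the pair remains a $(1,0)$-coframing, so every $G$-translate of a $0$-adapted coframing is again $0$-adapted.

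The last two facts say precisely that the fibers of $\mathcal{B}\to M$ are exactly the $G$-orbits inside the fibers of $\mathcal{F}$, and combined with the local sections constructed in the second step they exhibit $\mathcal{B}$ as a principal $G$-subbundle of $\mathcal{F}$, whose sections are the $0$-adapted coframings. I expect the only delicate points to be bookkeeping ones: fixing the convention for the group action so that the vanishing entry falls in the lower-left corner as in the stated form of $G$, and confirming the dimension count, namely that the single condition $M_{21}=0$ cuts the $4$-dimensional complex group $GL(2,\mathbb{C})$ down to the $3$-dimensional complex subgroup $G$.
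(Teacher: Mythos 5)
Your proposal is correct, and its core computation --- that two $0$-adapted coframings over the same open set differ by a $GL(2,\mathbb{C})$-valued function whose lower-left entry vanishes because both $\omega_2$'s annihilate the same complex line field $D$ --- is exactly the reasoning the paper relies on (the paper states the lemma without a separate proof and invokes this change-of-coframing fact at the start of the proof of Theorem \ref{theo:coframing}). Your additional verification that local $0$-adapted coframings exist, via the $J$-invariance of $I$ and the resulting line subbundle $I^{(1,0)}\subset T^{*(1,0)}M$, correctly fills in the routine detail the paper leaves implicit.
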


In the following analysis, we will denote the conjugate of $\omega_1, \omega_2$ by $\bar\omega_1, \bar\omega_2$ instead of $\overline{\omega_1}, \overline{\omega_2}$.

\begin{theorem}\label{theo:coframing}
A complex Engel structure has a canonical coframing $(\omega_1,\ \omega_2)$ (i.e., an $e$-structure) such that
\begin{equation}
\begin{aligned}
d\omega_2 &\equiv \omega_1\wedge\bar{\omega}_1&\ \ \ \ \  &\mod\ \ \  &\omega_2\, ,\\
d(\omega_2 + \bar{\omega}_2) &\equiv -\frac{1}{2}(\omega_1 -\bar{\omega}_1)\wedge (\omega_2 - \bar{\omega}_2)&\ \ \ \ \  &\mod\ \ \  &\omega_2 + \bar{\omega}_2\, .
\end{aligned}
\end{equation}

\end{theorem}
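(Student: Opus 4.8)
The plan is to run Cartan's method of equivalence, reducing the $G$-structure of $0$-adapted coframings all the way down to an $\{e\}$-structure, and to read off the two structure equations from the successive normalizations. I would first pass to the coframe bundle $B\to M$ with structure group $G$ and write the tautological equations $d\omega_1 = -\alpha\wedge\omega_1 - \beta\wedge\omega_2 + \tau_1$ and $d\omega_2 = -\gamma\wedge\omega_2 + \tau_2$, where $\alpha,\beta,\gamma$ are the pseudoconnection forms coming from the Lie algebra of $G$ and $\tau_1,\tau_2$ are the torsion. Since $(M,J)$ is a complex surface, integrability of $J$ forces each $d\omega_i$ to have no $(0,2)$-component, so the term $\bar\omega_1\wedge\bar\omega_2$ never occurs. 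Consequently, the only torsion surviving modulo $\omega_2$ in the second equation is $d\omega_2 \equiv T\,\omega_1\wedge\bar\omega_1 + P\,\omega_1\wedge\bar\omega_2 \pmod{\omega_2}$ for functions $T,P$ on $B$.

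\textbf{First reduction.} I would translate the Engel hypothesis into a nonvanishing condition on $T$. Since $I$ has rank $2$ and the derived map $\delta$ takes values in the rank-$1$ bundle $\Lambda^2(T^*M/I)$, its kernel $I^{(1)}$ has rank $1$ exactly when $\delta\neq 0$, and $\delta$ is governed by the coefficient $T$ of $\omega_1\wedge\bar\omega_1$; thus the Engel condition forces $T\neq 0$. A short computation shows that under $(\omega_1,\omega_2)\mapsto(a\omega_1+b\omega_2,\,c\omega_2)$ one has $T\mapsto \tfrac{c}{|a|^2}T$ and $P\mapsto \tfrac{c}{a\bar c}\big(P-\tfrac{T\bar b}{\bar a}\big)$, so $c$ sets $T=1$ and then $b$ sets $P=0$. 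This reduces $G$ to $G_1=\{(a,0,|a|^2):a\in\mathbb{C}^\ast\}$, yields the first stated equation, and identifies the now rank-$1$ derived system as $I^{(1)}=\{\omega_2+\bar\omega_2\}$.

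\textbf{Second reduction.} This step handles the real derived form $\theta=\omega_2+\bar\omega_2$. Writing the normalized equation as $d\omega_2=\omega_1\wedge\bar\omega_1+\omega_2\wedge\lambda$ with $\lambda=\lambda_1\omega_1+\lambda_0\bar\omega_1+\lambda_2\bar\omega_2$, the imaginary form $\omega_1\wedge\bar\omega_1$ cancels in $d\theta=d\omega_2+d\bar\omega_2$, and reducing modulo $\theta$ (so that $\bar\omega_2\equiv-\omega_2$) leaves $d\theta\equiv\omega_2\wedge\big[(\lambda_1-\bar\lambda_0)\omega_1+(\lambda_0-\bar\lambda_1)\bar\omega_1\big]\pmod{\theta}$. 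Here the remaining Engel condition $I^{(2)}=0$ says precisely that $d\theta\not\equiv 0\pmod\theta$, i.e. $\lambda_1-\bar\lambda_0\neq 0$. I would then verify that under the residual group $G_1$ this complex invariant transforms by a nonzero scaling, so that the parameter $a\in\mathbb{C}^\ast$ can be chosen to set $\lambda_1-\bar\lambda_0=1$; resubstituting $\bar\omega_2\equiv-\omega_2$ reproduces exactly the second stated equation. Since this pins down $a$ uniquely, it reduces $G_1$ to $\{e\}$ and produces the canonical coframe.

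\textbf{Main obstacle.} The dimension bookkeeping confirms completeness: $G$ is $6$-real-dimensional, the first step normalizes the complex pair $(T,P)$ (four real conditions) and the second normalizes the complex invariant $\lambda_1-\bar\lambda_0$ (two real conditions), exhausting all six. The step I expect to be the main obstacle is the second reduction: one must compute $d\theta\bmod\theta$ correctly, show that $I^{(2)}=0$ is equivalent to $\lambda_1-\bar\lambda_0\neq 0$, and --- most delicately --- check that the inhomogeneous shift by $d\log|a|^2$ coming from the $G_1$-action is absorbable into the pseudoconnection, so that the invariant transforms homogeneously and the final $\mathbb{C}^\ast$ of freedom is genuinely eliminated rather than prolonged.
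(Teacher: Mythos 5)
Your proposal is correct and is essentially the paper's own argument: the same Cartan-equivalence reduction $G \supset G_1 \supset \{e\}$, normalizing the same three torsion coefficients (your $T$, $P$, $\lambda_1-\bar\lambda_0$ are the paper's $A$, $f$, $-2p_1$, and your normalization $\lambda_1-\bar\lambda_0=1$ reproduces $p_1=-\tfrac12$ after resubstituting $\bar\omega_2\equiv-\omega_2$), with integrability of $J$ excluding the $(0,2)$ torsion in both treatments. The only difference is the order of reductions --- you absorb the $\omega_1\wedge\bar\omega_2$ torsion with $b$ before fixing $a$, the paper after --- which is harmless since each normalization is preserved by the residual group, and your flagged worry about the inhomogeneous $d\log|a|^2$ shift indeed cancels (it lies in the ideal of $\theta$, equivalently because $\log|a|^2$ is real and $\mu$ is the anti-Hermitian combination), so the invariant transforms homogeneously as $\mu\mapsto\mu/a$ exactly as you need.
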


\begin{proof}
Since $\omega_2 = 0$ defines the complex Engel structure, a 0-adapted coframing $  (\omega_1, \omega_2)$ is defined on an open set $U\subset M$ up to the following change of coframing:
\[\left(\begin{array}{c}
\hat{\omega}_1\\
\hat{\omega}_2
\end{array}
\right) = \left(\begin{array}{cc}
a & b\\
0& c
\end{array}
\right)
\left(\begin{array}{c}
\omega_1\\
\omega_2

\end{array}
\right),\]
where $a, b, c$ are complex functions on $U$ and $ac \neq 0$ on $U$.

\noindent The Engel condition implies that
\[d\omega_2 \equiv A\, \omega_1\wedge\bar{\omega}_1\ \ \ \ \  \mod\ \ \  \omega_2, \bar{\omega}_2\, ,\]
where $A \neq 0$. Define $\hat\omega_2 = \frac{1}{A}\omega_2$, then
\begin{equation}
\begin{aligned}
d\hat\omega_2 &= d\left(\tfrac{1}{A}\right) \wedge\omega_2 + \tfrac{1}{A} d\omega_2\\
 			 &\equiv \tfrac{1}{A} A\omega_1\wedge\bar{\omega}_1\ \ \ \ \  \mod\ \ \  \omega_2, \bar{\omega}_2\\
			 &\equiv \omega_1\wedge\bar{\omega}_1\ \ \ \ \  \mod\ \ \  \omega_2, \bar{\omega}_2\, .
\end{aligned}
\end{equation}
Thus, after rescaling $\omega_2$, we can arrange that $A = 1$. Then
\begin{equation}\label{eq:83}
d\omega_2 \equiv \omega_1\wedge\bar{\omega}_1\ \ \ \ \  \mod\ \ \  \omega_2, \bar{\omega}_2\, .
\end{equation}
Such coframings will be said to be $1$-adapted. They are the sections of a $G_1$-structure, where $G_1\subset G$ is defined by $c=a\bar{a}$.
The change of the coframing that preserves (\ref{eq:83}) is reduced to
\begin{equation}\label{eq:84}
\left(\begin{array}{c}
\hat{\omega}_1\\
\hat{\omega}_2
\end{array}
\right) = \left(\begin{array}{cc}
a & b\\
0& a\bar{a}
\end{array}
\right)
\left(\begin{array}{c}
\omega_1\\
\omega_2
\end{array}
\right)\, .
\end{equation}
Now $\omega_2$ is defined up to a real multiple, so the real and imaginary parts of $\omega_2$ are uniquely defined up to a real multiple. Suppose the real part of $\omega_2$ spans $I^{(1)}$, the first derived system.

By (\ref{eq:83}), the real part of $\omega_2$ spans $I^{(1)}$, the first derived system, i.e.,
\[d(\omega_2 + \bar{\omega}_2) \equiv 0\ \ \ \ \  \mod\ \ \  \omega_2, \bar{\omega}_2\, .\]
Since $d(\omega_2 + \bar{\omega}_2)$ is a real 2-form, there must be a complex function $p_1$ such that
\[
d(\omega_2 + \bar{\omega}_2) \equiv (p_1\omega_1 - \bar{p}_1\bar{\omega}_1)\wedge (\omega_2 - \bar{\omega}_2)\ \ \ \ \  \mod\ \ \  \omega_2 + \bar{\omega}_2\, .
\]
Because $I^{(2)} = (0)$, $d(\omega_2 + \bar{\omega}_2)\wedge(\omega_2 + \bar{\omega}_2) \not\equiv 0$ implies that $p_1\neq 0$. By replacing $(\omega_1, \omega_2)$ by $\left(-\tfrac{1}{2p_1}\omega_1, \tfrac{1}{4p_1\bar{p}_1}\omega_2\right)$, we can arrange $p_1 = -\tfrac{1}{2}$. After this arrangement, $a$ is fixed to be 1 in the transformation. Now the coframing satisfies
\begin{equation}\label{eq:1}
d(\omega_2 + \bar{\omega}_2) \equiv -\tfrac{1}{2}(\omega_1 -\bar{\omega}_1)\wedge (\omega_2 - \bar{\omega}_2)\ \ \ \ \  \mod\ \ \  \omega_2 + \bar{\omega}_2\, .
\end{equation}
Such coframings will be said to be $2$-adapted. They are the sections of a $G_2$-structure, where $G_2\subset G_1$ is defined by $a = 1$. After setting $a = 1$, by (\ref{eq:84}), $\omega_2$ is unique and $\omega_1$ is unique modulo $\omega_2$, i.e. a change of coframing that preserves (\ref{eq:83}) and (\ref{eq:1}) is reduced to 
\[
\left(\begin{array}{c}
\hat{\omega}_1\\
\hat{\omega}_2
\end{array}
\right) = \left(\begin{array}{cc}
1 & b\\
0& 1
\end{array}
\right)
\left(\begin{array}{c}
\omega_1\\
\omega_2

\end{array}
\right)\, .
\]

\noindent Because $\omega_2$ is uniquely defined now, we can write 
\[d\omega_2 \equiv \omega_1\wedge\bar{\omega}_1 + f\omega_1\wedge \bar{\omega}_2\ \ \ \ \  \mod\ \ \  \omega_2\]
for some function $f$. Note that there is no $\bar{\omega}_1\wedge\bar{\omega}_2$ term since we assumed that $(\omega_1, \omega_2)$ be of type (1,0), and the underlying almost complex structure is integrable.

By adding a multiple of $\omega_2$ to $\omega_1$, we can arrange $f = 0$. 
\begin{equation}\label{eq:27}
d\omega_2 \equiv \omega_1\wedge\bar{\omega}_1\ \ \ \ \  \mod\ \ \  \omega_2.
\end{equation}
After arranging this modification, the coframing satisfying (\ref{eq:1}) and (\ref{eq:27}) is completely determined: the original $G$-structure defines a canonical sub e-structure.
\end{proof}
Thus by a Theorem of Kobayashi \cite{MR0088766},
\begin{corollary}
The symmetry group of a complex Engel structure acts freely on the underlying connected manifold.
\end{corollary}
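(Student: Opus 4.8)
The plan is to show that a symmetry of a complex Engel structure must fix the canonical coframing of Theorem \ref{theo:coframing} exactly, and then to invoke the rigidity of absolute parallelisms. A symmetry is a biholomorphism $\phi$ of $(M,J)$ preserving the Pfaffian system $I$ (equivalently the complex line field $D=I^\perp$). I would first argue that such a $\phi$ preserves the full $e$-structure $(\omega_1,\omega_2)$. Since $\phi$ preserves $J$, the $(1,0)$-type decomposition, and the canonical flag $0\subset I^{(1)}\subset I\subset C(I^{(1)})\subset T^*M$, it carries $0$-adapted coframings to $0$-adapted coframings; and each normalization in the proof of Theorem \ref{theo:coframing} (the choices $A=1$, $p_1=-\tfrac12$, and $f=0$) is pinned down by an intrinsic equation that any structure-preserving map must respect. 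Because these reductions leave no residual freedom, the structure group having been reduced to $\{e\}$, the entire construction is equivariant and yields
\[\phi^*\omega_1=\omega_1,\qquad \phi^*\omega_2=\omega_2.\]

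Next, passing to real and imaginary parts, the four real $1$-forms $\mathrm{Re}\,\omega_1,\mathrm{Im}\,\omega_1,\mathrm{Re}\,\omega_2,\mathrm{Im}\,\omega_2$ form a coframe on the real $4$-manifold $M$ (here I use that $\omega_1,\omega_2$ span the $(1,0)$-cotangent space), so $(\omega_1,\omega_2)$ is an absolute parallelism that $\phi$ preserves. Now suppose $\phi$ fixes a point $p$. Since the $(\omega_i)_p$ span $T_p^*M$, evaluating $\phi^*\omega_i=\omega_i$ at $p$ gives $(\omega_i)_p\bigl(d\phi_p v\bigr)=(\omega_i)_p(v)$ for all $v$, whence $d\phi_p=\mathrm{id}$. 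Letting $X_1,\dots,X_4$ be the real framing dual to this coframe, invariance of the coframe is equivalent to $\phi_*X_j=X_j$, so $\phi$ commutes with each flow $\mathrm{Fl}^{X_j}_t$. As $\phi(p)=p$, it follows that $\phi$ fixes every point obtained from $p$ by composing these flows, and since the $X_j$ form a frame such compositions cover a neighborhood of $p$. Hence the fixed-point set of $\phi$ is open; being also closed and nonempty, it is all of the connected manifold $M$, so $\phi=\mathrm{id}$. This is precisely the assertion that the stabilizer of every point is trivial, i.e.\ the symmetry group acts freely, which is the content of the cited theorem of Kobayashi.

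I expect the only subtle step to be the first one: verifying that the successive normalizations of Theorem \ref{theo:coframing} are genuinely invariant, so that a symmetry fixes the coframing on the nose rather than merely modulo the residual group $G_2$. This requires retracing the proof and checking that each defining condition ($A=1$, $p_1=-\tfrac12$, $f=0$) transforms equivariantly under structure-preserving maps; once this is established, the reduction to an $\{e\}$-structure makes the coframing canonical, and the remainder is the standard fixed-point rigidity for absolute parallelisms sketched above.
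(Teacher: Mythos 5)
Your proposal is correct and follows exactly the paper's route: the paper deduces the corollary immediately from the canonical coframing of Theorem \ref{theo:coframing} together with Kobayashi's theorem that the automorphism group of an $e$-structure acts freely on a connected manifold, and your two steps (uniqueness of the normalized coframing forces $\phi^*\omega_i=\omega_i$; then the flow-commutation argument shows a symmetry with a fixed point is the identity) are precisely what that citation encapsulates, with the Kobayashi rigidity proved rather than cited. The ``subtle step'' you flag is indeed settled by the uniqueness assertion at the end of the paper's proof of Theorem \ref{theo:coframing}: the pullback of the canonical coframing under a symmetry satisfies the same defining normalizations ($A=1$, $p_1=-\tfrac12$, $f=0$), hence coincides with it.
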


\begin{theorem}
The canonical coframing of a complex Engel structure satisfies
\begin{equation}\label{eq:str}
\begin{aligned}
d\omega_1& = -(p_1\omega_1 + p_2\omega_2 + \bar{q}_1\bar{\omega}_1 + \bar{q}_2\bar{\omega}_2)\wedge\omega_1 - (q_2\omega_1 + \bar{r}_1\bar{\omega}_1 + \bar{r}_2\bar{\omega}_2)\wedge\omega_2\\
d\omega_2& = (\omega_1 - \omega_2)\wedge\bar{\omega}_1  - (p_1\omega_1 + p_2\omega_2 + \bar{p}_1\bar{\omega}_1 + \bar{p}_2\bar{\omega}_2)\wedge\omega_2
\end{aligned}
\end{equation}

\noindent where $p_1, p_2, q_1, q_2, r_1, r_2$ are complex functions. Thus, a complex Engel structure has $6$ fundamental functional invariants.
\end{theorem}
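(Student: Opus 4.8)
The plan is to derive the structure equations (\ref{eq:str}) by exploiting the fact, established in Theorem \ref{theo:coframing}, that the 2-adapted coframing $(\omega_1,\omega_2)$ is completely determined—an $e$-structure. Since $(\omega_1,\bar\omega_1,\omega_2,\bar\omega_2)$ is a coframing of the complexified cotangent bundle, the exterior derivatives $d\omega_1$ and $d\omega_2$ must expand as $\mathbb{C}$-linear combinations of the six independent 2-forms $\omega_1\wedge\bar\omega_1,\ \omega_1\wedge\omega_2,\ \omega_1\wedge\bar\omega_2,\ \bar\omega_1\wedge\omega_2,\ \bar\omega_1\wedge\bar\omega_2,\ \omega_2\wedge\bar\omega_2$. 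First I would write the fully general such expansion with unknown coefficient functions, and then impose, one at a time, all the constraints the canonical coframing already satisfies, showing that they force exactly the stated form and reveal precisely six free complex functions.

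The key constraints come in the following order. First, integrability of the complex structure $J$: because $\omega_1,\omega_2$ are of type $(1,0)$ and $J$ is integrable, the Newlander--Nirenberg condition forbids any $(0,2)$ component, i.e.\ there can be no $\bar\omega_1\wedge\bar\omega_2$ term in either $d\omega_1$ or $d\omega_2$; this kills two coefficients immediately. Second, the normalized derived-system equations (\ref{eq:27}) and (\ref{eq:1}) pin down the low-order part of $d\omega_2$: equation (\ref{eq:27}) forces the $\omega_1\wedge\bar\omega_1$ coefficient in $d\omega_2$ to be $1$ and forbids an $\omega_1\wedge\bar\omega_2$ term modulo $\omega_2$, while equation (\ref{eq:1}) fixes the $\bar\omega_1\wedge\omega_2$ contribution so that the $\omega_2$-terms of $d\omega_2$ organize into $-\omega_2\wedge\bar\omega_1 - (p_1\omega_1+p_2\omega_2+\bar p_1\bar\omega_1+\bar p_2\bar\omega_2)\wedge\omega_2$; combining with the $\omega_1\wedge\bar\omega_1$ term yields the printed $d\omega_2$ equation, which introduces the functions $p_1,p_2$ (and their conjugates). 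Third, and this is the main engine, I would invoke the Bianchi-type identities $d(d\omega_1)=0$ and $d(d\omega_2)=0$: differentiating the structure equations and substituting back produces algebraic relations among the remaining coefficients of $d\omega_1$, expressing them in terms of $p_1,p_2$ together with the new invariants $q_1,q_2,r_1,r_2$, and forcing the specific pairing of coefficients (for instance that the $\bar\omega_1$-coefficient in the $\omega_2$-slot of $d\omega_1$ equals $q_2$, matching an $\omega_1$-slot coefficient) that makes the displayed $d\omega_1$ consistent.

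The step I expect to be the main obstacle is this last compatibility analysis via $d^2=0$. The two identities $d(d\omega_1)=0$ and $d(d\omega_2)=0$ expand into $3$-forms in the six-dimensional (real) exterior algebra, and collecting coefficients of each independent $3$-form (such as $\omega_1\wedge\bar\omega_1\wedge\omega_2$, $\omega_1\wedge\omega_2\wedge\bar\omega_2$, etc.) yields a moderately large linear-and-quadratic system in the unknown functions and their exterior derivatives. The delicate part is bookkeeping: one must track which coefficient functions are genuinely free and which are determined, and verify that after imposing (\ref{eq:27}), (\ref{eq:1}), and $J$-integrability exactly six independent complex functions $p_1,p_2,q_1,q_2,r_1,r_2$ survive, with all other coefficients expressed through them. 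Once the identities are shown to close consistently on these six functions without further reduction, the count of $6$ fundamental functional invariants follows, completing the proof.
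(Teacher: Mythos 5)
Your proposal is correct and follows essentially the same route as the paper: normalize $d\omega_2$ using (\ref{eq:27}), (\ref{eq:1}) and the $J$-integrability exclusion of $(0,2)$ terms, then apply $d^2=0$ to the structure equations to force the coefficient pairing and conclude that exactly six complex functions survive. Two small corrections to your bookkeeping: the paper needs only $d(d\omega_2)=0$ for the algebraic constraint (it works because $d\omega_2$ contains the constant-coefficient terms $\omega_1\wedge\bar{\omega}_1$ and $\bar{\omega}_1\wedge\omega_2$, whereas $d(d\omega_1)=0$ yields only differential identities among the invariants, as the Remark after the theorem confirms), and the pairing forced is that the $\omega_1$-coefficient in the $\omega_2$-slot equals $q_2$, matching the conjugate $\bar{q}_2$ of the $\bar{\omega}_2$-coefficient in the $\omega_1$-slot --- not the $\bar{\omega}_1$-coefficient in the $\omega_2$-slot, which remains the free invariant $\bar{r}_1$.
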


\begin{proof}
From Theorem \ref{theo:coframing}, the structure equation can be writen as 
\[d\omega_2 = \omega_1\wedge\bar{\omega}_1 + (p\omega_1 + q\bar{\omega}_1 + r\bar{\omega}_2)\wedge\omega_2\]
for some functions $p, q, r$. Therefore 
\begin{equation}\label{eq:2}
\begin{split}
d(\omega_2 + \bar{\omega}_2) &\equiv (p\omega_1 + q\bar{\omega}_1 + r\bar{\omega}_2)\wedge\omega_2 +
(\bar{p}\bar{\omega}_1 + \bar{q}\omega_1 + \bar{r}\omega_2)\wedge\bar{\omega}_2\\
&\equiv ((p - \bar{q})\omega_1 + (q - \bar{p})\bar{\omega}_1)\wedge\omega_2\ \ \ \ \  \mod\ \ \  (\omega_2+\bar{\omega}_2)\, .
\end{split}
\end{equation}
But according to (\ref{eq:1})
\begin{equation}\label{eq:3}
\begin{split}
d(\omega_2 + \bar{\omega}_2)& \equiv -\tfrac{1}{2}(\omega_1 -\bar{\omega}_1)\wedge (\omega_2 - \bar{\omega}_2)\\
& \equiv -(\omega_1 -\bar{\omega}_1)\wedge \omega_2
\ \ \ \ \  \mod\ \ \  (\omega_2 + \bar{\omega}_2)\, .
\end{split}
\end{equation}
By comparing (\ref{eq:2}) and (\ref{eq:3}), we find $q = \bar{p} + 1$. Thus
\[d\omega_2 = \omega_1\wedge\bar{\omega}_1 +\bar{\omega}_1\wedge\omega_2+ (p\omega_1 + \bar{p}\bar{\omega}_1 + r\bar{\omega}_2)\wedge\omega_2\, .\]
Let $\alpha = -(p\omega_1 + \bar{r}\omega_2)$, then
\begin{equation}\label{eq:28}
d\omega_2 = \omega_1\wedge\bar{\omega}_1 +\bar{\omega}_1\wedge\omega_2 - (\alpha + \bar{\alpha})\wedge\omega_2\, ,
\end{equation}
where $\alpha$ is a $(1,0)$-form, uniquely defined by (\ref{eq:28}). Let $\gamma$ be a $(1,0)-$form and $\beta$ be any 1-form. The structure equation can be written as
\begin{equation}\label{eq:4}
\begin{aligned}
d\omega_1& = -(\alpha + \bar{\gamma})\wedge\omega_1 - \beta\wedge\omega_2\, ,\\
d\omega_2& = \omega_1\wedge\bar{\omega}_1 +\bar{\omega}_1\wedge\omega_2 - (\alpha + \bar{\alpha})\wedge\omega_2\, .
\end{aligned}
\end{equation}

\noindent Taking the exterior derivative of $d\omega_2$ then yields
\[-\bar{\gamma}\wedge\omega_1\wedge\bar{\omega}_1 + \omega_1\wedge\bar{\beta}\wedge\bar{\omega}_2 + \omega_1\wedge \gamma\wedge\bar{\omega}_1 \equiv0\ \ \mod \omega_2\, .\]
Recall that $\gamma$ is a $(1,0)$-form, so
\[
\bar{\gamma}\wedge\bar{\omega}_1 + \bar{\beta} \wedge\bar{\omega}_2 \equiv 0\ \ \ \ \  \mod\ \ \  \omega_1,\ \omega_2\, .
\]

\noindent Let $\gamma = q_1\omega_1 + q_2\omega_2$, then
$$\beta \equiv q_2\omega_1\ \ \ \ \  \mod\ \ \  \ \bar{\omega}_1,\ \omega_2,\ \bar{\omega}_2\, .$$
Thus, the final structure equation of a complex Engel structure is
\begin{equation}\label{eq:85}
\begin{aligned}
d\omega_1& = -(p_1\omega_1 + p_2\omega_2 + \bar{q}_1\bar{\omega}_1 + \bar{q}_2\bar{\omega}_2)\wedge\omega_1 - (q_2\omega_1 + \bar{r}_1\bar{\omega}_1 + \bar{r}_2\bar{\omega}_2)\wedge\omega_2\, ,\\
d\omega_2& = (\omega_1 - \omega_2)\wedge\bar{\omega}_1  - (p_1\omega_1 + p_2\omega_2 + \bar{p}_1\bar{\omega}_1 + \bar{p}_2\bar{\omega}_2)\wedge\omega_2\, .
\end{aligned}
\end{equation}
\end{proof}

\begin{remark}
We can take exterior derivatives of (\ref{eq:85}), and see that there are no further relations on $p_1, p_2, q_1, q_2, r_1, r_2$. All differential invariants of complex Engel structures are these six or their derivatives with respective to the canonical coframing.
\end{remark}

\section{Homogeneous Complex Engel Structures}

In this section, we will classify homogeneous complex Engel structures. The group of diffeomorphisms preserving a complex Engel structure also preserves its canonical coframing and hence preserves its fundamental invariants. Thus, if it is homogeneous, then the invariants must be constant. Assume that the functions be constant and take exterior derivatives of $d\omega_1$, $d\bar{\omega}_1$, $d\omega_2$ and $d\bar{\omega}_2$, and set all of these to be zero. This will yield quadratic equations on $p,\ q,\ r$. After solving these equations, which was done with the help of MAPLE, we arrive at the following theorem:
 \begin{theorem}[Classification of Homogeneous Complex Engel Structures]\label{th: 11}
There are six distinct two-parameter families of homogeneous structure equations of complex Engel structures. The constants $(p_1, p_2, q_1, q_2, r_1, r_2)$ in equation (\ref{eq:str}) are listed as follows for the six cases:
 \begin{itemize}
 \item Case C1: $(a + ib,\  0,\  0,\  0,\  0,\  0)$
 \item Case C2: $(\frac{1}{2} + ib,\ 0,\ 2ia,\  0,\ 0,\ 0)$
 \item Case C3: $(\frac{1}{2} - ib,\  0,\  2ib,\ \frac{1}{2}(2b + i)(2ia - b),\ 2b^2 - ib,\ (b^2 + \frac{1}{4})(2a + ib))$
 \item Case C4: $(0,\ a-ib,\ 2ib,\ a-ib,\ -a-ib,\ 0)$
 \item Case C5: $(a(1-2ib),\ \frac{1}{4}(2a - 1)(2b + i)^2,\ 2ib,\ -\frac{1}{4}(2b + i)^2,\\ -\frac{1}{4}(2b + 4ab + i(2a - 1))(-2b + i),\ \frac{1}{4}a(-1 + 2bi)(1+ 4b^2) )$
 \item Case C6: $(\frac{1}{2}(\cos a + i \sin a)(2b + i),\ \frac{1}{4}(-\sin a + 2b\cos a  - 1)(2b + i)^2,\ 2ib,\\ -\frac{1}{8}(2i b\sin a  + i\cos a - 2b\cos a  - 2i b + \sin a + 1)(2b + i)^2,\\ \frac{1}{4}(1 + 2bi)(2ib\sin a  + i\cos a + 2b\cos a  - 2ib -\sin a - 1),\\  \frac{1}{16}(1 + 4b^2)(-1 + 2ib)(2ib\sin a  + i\cos a + 2b\cos a  - 2ib -\sin a - 1)   )$
 \end{itemize}
 where $a$ and $b$ are real constants.
\end{theorem}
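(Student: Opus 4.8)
The plan is to use the reduction already noted above: homogeneity forces the six invariants $p_1,p_2,q_1,q_2,r_1,r_2$ to be constants, so the structure equations (\ref{eq:str}) become a system with constant coefficients. Writing $\omega_1,\omega_2$ in terms of a real coframing (their real and imaginary parts), these constant-coefficient equations are exactly the Maurer--Cartan equations of a $4$-dimensional real Lie algebra, and the one remaining constraint on the admissible constants is that this be a genuine Lie algebra, i.e. that $d(d\omega_1)=0$ and $d(d\omega_2)=0$ (the Jacobi identity). Thus the classification amounts to solving the polynomial system cut out by $d^2=0$ and then parametrizing its solution set.

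First I would substitute constant values into (\ref{eq:str}) and compute $d(d\omega_1)$ and $d(d\omega_2)$, re-expanding each $d\omega_j$ and $d\bar\omega_j$ that appears by means of (\ref{eq:str}) and its conjugate. Since the coefficients are constants, each $d(d\omega_i)$ is a $3$-form whose coefficients are (at most) quadratic polynomials in the twelve real unknowns $\mathrm{Re}\,p_j,\mathrm{Im}\,p_j,\dots$, equivalently in the six complex constants and their conjugates. At a point the complexified cotangent space is spanned by $\omega_1,\omega_2,\bar\omega_1,\bar\omega_2$, so the space of $3$-forms is $4$-dimensional; expanding $d^2\omega_i$ in the basis $\{\omega_a\wedge\omega_b\wedge\omega_c\}$ and collecting coefficients yields four complex equations from each of $d^2\omega_1=0$ and $d^2\omega_2=0$. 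The conjugate conditions $d^2\bar\omega_i=0$ are the complex conjugates of these and add nothing, so one is left with eight complex quadratic equations in six complex unknowns.

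Next I would solve this quadratic system. The efficient route is an elimination / Gröbner basis computation (the role played by MAPLE here), carried out in the twelve real variables so that complex conjugation is an honest algebraic operation. The assertion of the theorem is that the resulting variety in $\mathbb{C}^6$ decomposes into exactly six pieces, each a two-real-parameter family carrying the explicit rational (and, in Case C6, trigonometric) parametrization listed. I would confirm the classification in two stages: substitute each of the six printed tuples back into the eight equations to verify that they vanish identically in $a$ and $b$ (so every listed family really is a homogeneous complex Engel structure), and then read off completeness and distinctness from the primary decomposition of the ideal.

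The main obstacle is this last step. The quadratic equations couple the real and imaginary parts of the invariants in a genuinely nonlinear way, so the solution variety is reducible and its components must be separated with care: one has to be sure that no solution branch is dropped, that each branch is indeed two-dimensional (rather than a lower-dimensional stratum absorbed into another family), and that the six families are pairwise distinct rather than reparametrizations of one another. Verifying that each printed family closes up --- that the constants satisfy all eight relations identically in the parameters --- is the routine consistency check; establishing exhaustiveness and irredundancy of the six-family list is where the substantive work lies, and it is precisely here that the symbolic computation is indispensable.
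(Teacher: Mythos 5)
Your proposal is correct and follows essentially the same route as the paper: homogeneity forces the six invariants to be constant, the conditions $d^2\omega_i=0$ cut out a quadratic system in the constants, and that system is solved by symbolic computation (MAPLE), yielding the six two-parameter families. The only difference is cosmetic --- the paper first extracts by hand the linear consequences of $d^2\omega_2=0$ (namely $r_2 = p_1q_2 + p_2 - q_2$, that $q_1$ is purely imaginary, and $\Im p_2 = q_0(p_1+\bar p_1-1)$) before feeding the remaining quadratics from $d^2\omega_1=0$ to the computer, whereas you would submit the full system at once and organize completeness via a primary decomposition.
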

\begin{proof}
First take the exterior derivatives of $d\omega_2$ and $d\bar{\omega}_2$, which yield
\begin{equation}\label{eq:29}
\begin{aligned}
r_2 &= p_1q_2 + p_2 - q_2\, ,\\
\bar{r}_2 &= \bar{p}_1\bar{q}_2 + \bar{p}_2 -\bar{q}_2\, .
\end{aligned}
\end{equation}
Substituting these into exterior derivatives of $d\omega_2$ and $d\bar{\omega}_2$ yields $q_1 + \bar{q}_1 = 0$
So $q_1$ is pure imaginary. Set 
\begin{equation}\label{eq:86}
q_1 = 2iq_0\, ,\ \bar{q}_1 = -2iq_0\, .
\end{equation}
Substituting these relations into $d\omega_2$ and $d\bar{\omega}_2$ yields
\[\Im{p_2}= q_0(p_1 + \bar{p}_1 - 1)\, ,\]
where $\Im{p_2}$ means the imaginary part of $p_2$.

Now take the exterior derivatives of $d\omega_1$ and $d\bar{\omega}_1$ and set these to be zero. The equations are quadratic expressions in the coefficients of $d\omega_1, d\omega_2$. Then solve these quadratic equations. We get the six different 2-parameter solutions, listed in the Theorem.
\end{proof}

\section{Compact Homogeneous Complex Engel Structures}

We have proved the classification result for homogeneous complex Engel structures. Now we can classify compact homogeneous complex Engel structures. We will prove the following theorem:
\begin{theorem}[Classification of Compact Homogeneous Complex Engel Structures]\label{th:10}
Let $\mathfrak{g}$ be the 4-dimensional Lie algebra of symmetry vector fields of a complex Engel structure. For the six distinct 2-parameter families of homogeneous complex Engel structures listed in Theorem \ref{th: 11}, the results about compactness in each case are listed as follows:
 \begin{itemize}
 \item Case C1:  If $a = \frac{1}{2}$ and $b = 0$, the Lie algebra is a 4-dimensional solvable Lie algebra. There exists a compact quotient that supports a homogeneous complex Engel structure if and only if $a = \frac{1}{2}$ and $b = 0$.
 
 \item Case C2: If there exists a complex number $\lambda$ and a matrix $A\in SL_3(\mathbb{Z})$ such that
\begin{enumerate}
\item $b = -a$
\item  $\left|\lambda\right|\neq 1$
\item the eigenvalues of $A$ are $(\lambda\bar\lambda)^{-1}, \lambda, \bar{\lambda}$
\item  there exists $k\in\mathbb{Z}$ such that $-\tfrac{1}{2a} \log( \left|\lambda\right|) = \arg{\lambda} + 2k\pi$
\end{enumerate}
then there exists a co-compact lattice $\Gamma$ such that $G/\Gamma$ supports a homogeneous complex Engel structure.

 \item Case C3: 
 \begin{enumerate}
 \item  If $a = -\frac{1}{4}$, the Lie algebra is a solvable Lie algebra, and there exists a compact quotient.
 \item If $a = b^2 \neq 0$, the Lie algebra is a solvable Lie algebra, but there does not exist a compact quotient.
\item If $(a<-\frac{1}{4})$ or $(0\leq a < b^2)$ or $(b = 0\ \ \text{and}\ \ a<0)$, the Lie algebra is $\mathbb{R}\times\mathfrak{sl}(2,\mathbb{R})$. There exists a compact quotient.

\item If   $(-\frac{1}{4} < a < 0)$ or $(a > b^2)$ or $(b = 0\ \ \text{and}\ \ \ a >0)$, the Lie algebra is $\mathbb{R}\times\mathfrak{su}(2)$. There exists a compact quotient.

 \end{enumerate}
%

 \item Case C4: There is no compact quotient that supports a homogeneous complex  Engel structure. 
 
\item Case C5: If $a = \tfrac{1}{2}$, there exists a co-compact lattice $\Gamma$ of a solvable  Lie group $G$ that $G/\Gamma$ supports a homogeneous complex Engel structure. 
  
 \item Case C6: There is no compact quotients that supports a  homogeneous complex Engel structure unless $a = -\frac{\pi}{2} + 2k\pi,\ k\in \mathbb{Z}$ and $b = 0$. Under this condition, this is a special case of case $C1$.

 \end{itemize}
 
In summary, compact quotients that support homogeneous complex Engel structures can occur in case $C1$, case $C2$, case $C3$, and case $C5$. 
\end{theorem}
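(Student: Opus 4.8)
The plan is to read the constant structure equations of Theorem \ref{th: 11} as the Maurer--Cartan equations of a real 4-dimensional Lie algebra $\mathfrak{g}$, and then decide, case by case, whether the corresponding simply-connected Lie group $G$ admits a co-compact lattice. By the Corollary to Theorem \ref{theo:coframing} the symmetry group acts freely, so for a homogeneous structure the manifold is a homogeneous space with $\dim_{\mathbb{R}} G = 4$; since the canonical coframing is invariant, $(\omega_1,\omega_2,\bar\omega_1,\bar\omega_2)$ descends to a left-invariant coframing whose structure constants are exactly those tabulated in each case. Thus the problem factors into three subproblems: (i) identify the isomorphism type of $\mathfrak{g}$; (ii) determine the simply-connected $G$; and (iii) decide the existence of a co-compact lattice $\Gamma$, so that $G/\Gamma$ is the desired compact quotient.

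First I would determine the algebraic type of $\mathfrak{g}$ in each case. Reading off the brackets of the dual frame $(e_1,e_2,\bar e_1,\bar e_2)$ from equation (\ref{eq:str}), I would compute the derived series and the nilradical and locate the Levi factor. The cases split according to whether $\mathfrak{g}$ is solvable or carries a semisimple summand; in the latter situation the sign of the discriminant of the adjoint action (equivalently, whether the invariant form on the semisimple part is definite or indefinite) distinguishes $\mathbb{R}\times\mathfrak{su}(2)$ from $\mathbb{R}\times\mathfrak{sl}(2,\mathbb{R})$. This is precisely the dichotomy producing the four subcases of C3, and the boundary inequalities on $(a,b)$ are where this discriminant changes sign.

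For the cases with a semisimple part the existence of a lattice is the easy direction. When $\mathfrak{g}\cong\mathbb{R}\times\mathfrak{su}(2)$ the group is $G\cong\mathbb{R}\times SU(2)$; since $SU(2)$ is compact, $\mathbb{Z}\times\{1\}$ is a co-compact lattice and the quotient is $S^1\times SU(2)$. When $\mathfrak{g}\cong\mathbb{R}\times\mathfrak{sl}(2,\mathbb{R})$ the group is $\mathbb{R}\times\widetilde{SL(2,\mathbb{R})}$; here I would take the preimage in $\widetilde{SL(2,\mathbb{R})}$ of a co-compact Fuchsian group in $PSL(2,\mathbb{R})$ --- which is itself a co-compact lattice because the central $\mathbb{Z}$-extension is co-compact --- and multiply by $\mathbb{Z}\subset\mathbb{R}$.

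The solvable cases (all of C1 and C5, C2, and the solvable parts of C3), together with C4 and the generic part of C6, are the heart of the matter, and the number-theoretic lattice question is the main obstacle. A necessary condition for any lattice is unimodularity, $\tr(\mathrm{ad}_X)=0$ for all $X$, and I expect this single constraint to force the special parameter values appearing in the statement (for instance $a=\tfrac12,\,b=0$ in C1, $a=\tfrac12$ in C5, and $b=-a$ in C2). After imposing unimodularity I would write $G$ as a semidirect product $\mathbb{R}\ltimes_\phi N$ with $N$ the nilradical and $\phi(t)=\exp(t\,\mathrm{ad})$, and apply the standard criterion that such a $G$ admits a lattice iff there is $t_0\neq 0$ and a lattice $\Delta\subset N$ with $\phi(t_0)(\Delta)=\Delta$ (for $N$ abelian this means $\phi(t_0)$ is conjugate over $GL(n,\mathbb{R})$ to an element of $SL(n,\mathbb{Z})$; in general this is Malcev's rationality criterion). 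Making this explicit is exactly the content of C2: one requires the eigenvalues $\lambda,\bar\lambda,(\lambda\bar\lambda)^{-1}$ of $\phi(t_0)$ to coincide with those of a matrix $A\in SL(3,\mathbb{Z})$, with the obstruction to $e^{t_0 D}$ having integer entries reduced to the arithmetic relation between $\arg\lambda$ and $\log|\lambda|$ in condition (4). The hardest step is showing that, off these arithmetic loci, \emph{no} choice of $t_0$ can realize the eigenvalues as those of an integer unimodular matrix; this is what excludes C4 and the generic part of C6 and pins the remaining solvable structures down to the stated discrete families.
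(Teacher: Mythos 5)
Your proposal is correct and shares the paper's overall architecture (read the constant structure equations as Maurer--Cartan equations, identify $\mathfrak{g}$, then settle existence of a co-compact lattice), but it differs in two substantive ways that are worth recording. First, your necessity mechanism is unimodularity, $\tr(\mathrm{ad}_X)=0$; the paper instead argues directly on the compact manifold by Stokes' theorem, exhibiting in each excluded case an invariant $3$-form $\sigma$ with $d\sigma = c\,\omega_1\wedge\bar\omega_1\wedge\omega_2\wedge\bar\omega_2$ and $c\neq 0$ (e.g.\ $c=-(1-2a+2bi)$ in C1, $c=2i(a+b)$ in C2, $c=1+2bi$ in C4, $c=(1-2a)(1+2bi)$ in C5), so that the volume form is exact. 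These are the same computation in two guises --- a non-closed invariant codimension-one form is exactly failure of unimodularity --- and your expectation that this one constraint pins down $a=\tfrac12,b=0$ in C1, $b=-a$ in C2, $a=\tfrac12$ in C5, and kills all of C4 does check against the paper's coefficients; the Stokes version has the small advantage of not presupposing the compact model is $G/\Gamma$. One caution: in C6 at $a=-\tfrac{\pi}{2}+2k\pi$, $b\neq 0$, the two natural $3$-forms the paper computes are closed, and the paper excludes this locus by identifying it with the C3 case $a=b^2\neq 0$ and consulting Bock's tables; your full trace test still works there (and in fact simplifies that C3 subcase, since $\mathfrak{g}_{4,10}$ has $\tr(\mathrm{ad}_{X_3})=-2\neq 0$), but checking only a couple of $3$-forms would not. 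Second, for sufficiency the paper carries out your Malcev-type $\mathbb{R}\ltimes_\phi N$ analysis explicitly only in case C2 --- the eigenvalue conditions on $A\in SL_3(\mathbb{Z})$ and the relation between $\arg\lambda$ and $\log|\lambda|$ arise exactly as you describe --- while for C1, C3 ($a=-\tfrac14$) and C5 it reduces to explicit solvable algebras ($[X,Y]=Z,\,[X,Z]=-Y$, resp.\ $[X_2,X_3]=X_1,\,[X_2,X_4]=X_2,\,[X_3,X_4]=-X_3$) and cites Bock's classification of lattices in low-dimensional solvable groups; doing these by hand, as you propose, is feasible and self-contained. Finally, note that for C2 the theorem asserts only sufficiency of the arithmetic conditions, so the ``hardest step'' you flag (ruling out lattices off the arithmetic loci) is more than either the statement or the paper's proof requires.
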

We will prove the theorem in the following section by analyzing the structure equation for each case. If the structure equation is solvable, we will also provide a local coordinate system  expression of the coframing.

\section{Proof of Theorem \ref{th:10}}

\subsection{Homogeneous Case C1}

In this case, $(p_1, p_2, q_1, q_2, r_1, r_2) = (a + ib,\  0,\  0,\  0,\  0,\  0)$. The structure equation is
\begin{equation}\label{eq:case11}
\begin{aligned}
d\omega_1& = 0\, ,\\
d\omega_2& = \left(\omega_1 - \omega_2\right)\wedge\bar{\omega}_1  - \left(\left(a+ ib\right)\omega_1 +  \left(a - ib\right)\bar{\omega}_1 \right)\wedge\omega_2\, .
\end{aligned}
\end{equation}
By (\ref{eq:case11}),
\begin{equation}\label{eq:72}
d(\omega_1\wedge\omega_2\wedge\bar{\omega}_2) = -(1 -2a +2bi)\omega_1\wedge\bar{\omega}_1\wedge\omega_2\wedge\bar{\omega}_2\, .
\end{equation}
By Stokes' Theorem, there is no compact example unless $a = \frac{1}{2}$ and $b = 0$.

By (\ref{eq:case11}), $d\omega_1 = 0$. By the complex Poincar\'e Lemma, there exists a holomorphic function $z$ locally on the manifold such that
\begin{equation}\label{eq:25}
\omega_1 = dz\, .
\end{equation}
\noindent Thus
\[ d\omega_2 = dz\wedge d\bar{z}  +[-(a + ib)dz + (1 - a + ib)d\bar{z}] \wedge \omega_2\, .\]

We will find a local coordinate system for the coframing $(\omega_1, \omega_2)$ in order to explicitly describe its group of symmetries. Since the groups of symmetries  are different for different $a$ and $b$, we will consider two cases: $a + ib = 1$ (special case) and generic case.

\subsubsection{Special Case}

If $a + ib = 1$, i.e. $ a = 1$ and $b = 0$, 
\begin{equation}\label{eq:65}
d(\omega_2 + \bar{z} dz) = -dz\wedge (\omega _ 2 + \bar{z}dz)\, .
\end{equation}
By the complex Frobenius Theorem, there exists a complex function $f$ and a holomorphic function $w$ locally on the manifold such that
\begin{equation}\label{eq:26}
\omega_2 + \bar{z} dz = fdw\, .
\end{equation}
Since $(\omega_1,\ \omega_2)$ is a coframing, $\omega_1$ and $\omega_2$ are linearly independent. By comparing the local coordinate expressions (\ref{eq:25}) and (\ref{eq:26}), we know that  $f$ is nowhere zero on its defining domain. Substituting  (\ref{eq:26}) into (\ref{eq:65}) yields
\[df\wedge dw = -f\ dz\wedge dw\, .\]
Setting $f = e^{-z} g$ for some function $g \neq 0$, we have $dg\wedge dw = 0$. So the function $g$ is a function of $w$ only.
\[\omega_2 + \bar{z} dz = e^{-z} g(w)dw\, .\]
By defining $\tilde{w} = \int g(w)dw$ and dropping the tilde in the local coordinate, we get
\[\omega_2 + \bar{z} dz = e^{-z} dw\, .\]
Since $(\omega_1,\omega_2)$ is a $(1,0)$-coframing, $(z, w)$ can serve as a local holomorphic coordinate system in a neighborhood of the manifold. In the coordinate system of $(z,w)$, the coframing can be expressed as
\begin{equation}\label{eq:loc1}
\begin{aligned}
\omega_1& = dz\, .\\
\omega_2 &= -\bar{z} dz + e^{-z} dw\, .
\end{aligned}
\end{equation}

We consider the symmetry group of the coframing in these local coordinate. Let 
\[\Gamma = \{ k_1 + i k_2| (z,w) \rightarrow(z, w+ k_1 + i k_2),\  \text{where}\ \  k_1, k_2\in \mathbb{Z}\}\, . \]

$\Gamma$ acts freely and discontinuously on the coframing in the local coordinate. So we can take a global model for this complex Engel structure
\[M^4 =\mathbb{C} ^ 2/\Gamma \cong \mathbb{R}^2\times T^2\, .\]

\subsubsection{Generic Case}

If $a + ib \neq 1$, i.e. $a \neq 1$ or $b\neq 0$ , by (\ref{eq:case11}), we have
\begin{equation}\label{eq:10}
d\left(\omega_2 - \frac{dz}{1-a+ib}\right) = \left[-(a + ib)dz + (1 - a + ib)d\bar{z}\right] \wedge \left(\omega_2 - \frac{dz}{1-a+ib}\right)\, .
\end{equation}
By the complex Frobenius Theorem, there exists a complex function $f$ and a holomorphic function $w$ locally on the manifold such that
\begin{equation}\label{eq:11}
\omega_2 - \frac{dz}{1-a+ib} = fdw\, .
\end{equation}
Substituting (\ref{eq:11}) into (\ref{eq:10}) yields 
\[df\wedge dw = f\  [-(a + ib)dz + (1 - a + ib)d\bar{z}]\wedge dw\, .\]
By defining $f = e^{-(a + ib)z + (1 - a + ib)\bar{z}} g$ for some function $g \neq 0$, we get $dg\wedge dw = 0$. Thus
\[\omega_2 - \frac{dz}{1-a+ib} = e^{-(a + ib)z + (1 - a + ib)\bar{z}} g(w)dw\, . \]
By defining $\tilde{w} = \int g(w)dw$ and dropping the tilde, we have
\[\omega_2 - \frac{dz}{1-a+ib} = e^{-(a + ib)z + (1 - a + ib)\bar{z}}dw\, . \]
Since $(\omega_1,\omega_2)$ is a coframing, $(z, w)$ can serve as a local holomorphic coordinate system on a open set. The coframing can be written as
\begin{equation*}
\begin{aligned}
\omega_1& = dz\, ,\\
\omega_2 &= \frac{dz}{1-a+ib} + e^{-(a + ib)z + (1 - a + ib)\bar{z}}dw\, .
\end{aligned}
\end{equation*}

We will analyze the symmetry group of the coframing. Define
\begin{equation*}
G_{a,b} =\{(\alpha,\ \beta)| -(a + ib)\alpha + (1 - a + ib)\bar{\alpha}= 2k\pi i, \ \ \ \text{where }\alpha,\ \beta\in\mathbb{C} \text{ and } k\in\mathbb{Z}\}\, ,
\end{equation*}
where $G_{a,b}$ acts on the local coordinate as $(z,\ w)\rightarrow (z + \alpha,\ w + \beta\,) $. We will analyze the elements of $G_{a,b}$. Let $\alpha = \alpha _ 0 + i \alpha_1$, where $\alpha_0, \alpha_1\in \mathbb{R}$. We have
\begin{equation}\label{eq:12}
\begin{aligned}
&(1- 2a)\alpha_0 + 2b\alpha_1 = 0\, ,\\
 &\alpha_1=2k\pi\, ,
\end{aligned}
\end{equation}
where $k\in \mathbb{Z}$.

\noindent For different $a,\ b$, there exist three families of solution for $\alpha$:
\begin{enumerate}
\item if $a\neq \frac{1}{2}$, then $\alpha_0 = \frac{-4bk\pi}{1 - 2a}, \ \alpha_1 = 2k\pi$. Define
\[\Gamma_1 =\left \{ \left.\left(\left( \frac{-4b\pi}{1 - 2a} + i2\pi\right)k, \beta_0 + i\beta_1\right)\right| k,\ \beta_0,\ \beta_1\in \mathbb{Z}\right \}\, .
\]
We can get a non-compact quotient
\[\mathbb{C}^2/\Gamma_1\cong \mathbb{R}\times S^1\times T^2
\]
that supports a homogeneous complex Engel structure.
\item if $a= \frac{1}{2}, \ b = 0$, then $\alpha_1 = 2k\pi$. Define
\[\Gamma_2 =\left \{ \left.\left(\alpha_0 + i2k\pi, \beta_0 + i\beta_1 \right)\right| k,\ \alpha_0,\ \beta_0,\ \beta_1\in \mathbb{Z}\right \}\, .\]
We get a compact quotient $\mathbb{C}^2/\Gamma_2$ that supports a homogeneous complex Engel structure.

In this case, we can define $\theta_1 = - \omega_1$ and $\theta_2 = \omega_1 - \tfrac{1}{2}\omega_2$. The structure equation is
\begin{equation}
\begin{aligned}
d\theta_1 &= 0\, ,\\
d\theta_2 &= \tfrac{1}{2}\, (\theta_1 - \bar\theta_1)\wedge\theta_2\, .
\end{aligned}
\end{equation}
Define $\theta_1 = \alpha + i \beta$ and $\theta_2 = \gamma + i\delta$ for real parts and imaginary parts decomposition. We have
\begin{equation}
\begin{aligned}
d\alpha &= 0\, ,\\
d\beta &= 0\, ,\\
d\gamma &= -\beta\wedge\delta\,,\\
d\delta &= \beta\wedge\gamma\, .
\end{aligned}
\end{equation}
Thus the Lie algebra is a 4-dimensional solvable Lie algebra $\mathfrak{g}$ with nontrivial brackets:
 \[[X, Y] = Z,\, [X, Z] = -Y,\]
where $X, Y, Z \in \mathfrak{g}$. By the classification results in \cite{MR3480018}, the corresponding connected and simply-connected Lie group has a co-compact lattice.
\item if $a= \frac{1}{2}, \ b\neq 0$, then $\alpha_1 = 0$.  Define
\[\Gamma_3 =\left \{\left.\left (\alpha_0, \beta_0 + i\beta_1 \right)\right| \alpha_0, \beta_0,\ \beta_1\in \mathbb{Z}\right \}\, .\]
Then we can get a non-compact quotient
\[\mathbb{C}^2/\Gamma_3\cong \mathbb{R}^1\times S^1\times T^2\, .\]
\end{enumerate}

In summary, there exists a compact quotient of type $C1$ that can support a homogeneous complex Engel structure if and only if $a = \frac{1}{2}$ and  $b = 0$.

\subsection{Homogeneous Case C2}

Now, $(p_1, p_2, q_1, q_2, r_1, r_2)=(\tfrac{1}{2} + ib,\ 0,\ 2ia,\  0,\ 0,\ 0)$. Assume $a\neq 0$, otherwise, it is a special case of $C1$. The structure equation is
\begin{equation}\label{eq:16}
\begin{aligned}
d\omega_1& = 2ia\bar{\omega}_1\wedge \omega_1\, ,\\
d\omega_2& = \left(\omega_1 - \omega_2\right)\wedge\bar{\omega}_1  - \left[\left(\tfrac{1}{2}+ ib\right)\omega_1 +  \left(\tfrac{1}{2} - ib\right)\bar{\omega}_1\right]\wedge\omega_2\, .
\end{aligned}
\end{equation}

By (\ref{eq:16})
\begin{equation*}
d\left(\omega_1 +  \left(-\tfrac{1}{2} + i\left(2a-b\right)\right)\omega_2\right) = \left(\tfrac{1}{2} + ib\right)\left(\bar{\omega}_1 - \omega_1\right)\wedge \left(\omega_1 + \left(-\tfrac{1}{2} + i\left(2a-b\right)\right)\omega_2\right)\, .
\end{equation*}
By the complex Frobenius Theorem, there exist complex functions $p$ and $q$, and holomorphic functions $z$ and $w$ such that
\begin{equation*}
\begin{aligned}
\omega_1 &= pdz\, ,\\
\omega_1 + \Big(-\frac{1}{2} + i(2a-b) \Big)\omega_2 &= qdw\, .
\end{aligned}
\end{equation*}
To calculate the function $p$, write $\omega_1 = \alpha + i\beta$, where $\alpha$ and $\beta$ are the real and imaginary part of $\omega_1$, respectively. From the structure equation
\begin{equation*}
\begin{aligned}
&d\alpha = -4a\alpha\wedge \beta\, ,\\
&d\beta = 0\, .
\end{aligned}
\end{equation*}
Thus there exist functions $f,\ x,\ y$ such that $\alpha = fdy, \beta = dx$ and $df\equiv 4af dx\ \mod dy$. After redefining $y$, we can write $\alpha = e^{4ax}dy$. Thus
\begin{equation}
\omega_1 = e^{4ax}d\left(y - \tfrac{i}{4a} e^{-4ax}\right)\, .
\end{equation}

Define $z = y - \tfrac{i}{4a} e^{-4ax}$ as a local holomorphic coordinate ( Note: $\Im(z) \neq 0$. So, according to the sign of $a$, we can restrict the definition of $z$ to half of the complex plane), then
\[\omega_1 = \frac{-i}{2a(z - \bar{z})}dz\, .\]
From the structure equation, we get
\[dq\wedge dw = \Big(\frac{1}{2} + ib\Big)\frac{i}{2a(\bar{z} - z)}d(\bar{z} - z) \wedge q dw\]
Thus
\[dq\equiv \Big(\frac{1}{2} + ib\Big)\frac{i}{2a(\bar{z} - z)}d(\bar{z} - z) q  \ \ \ \ \ \mod\ \ \ \ dw\]
After redefining $w$, we can take $q = (\bar{z} - z)^{\tfrac{-2b +i}{4a}}$. So in the local holomorphic coordinate system $z,\ w$,
 \begin{equation*}
\begin{aligned}
\omega_1 & = \frac{-i}{2a(z - \bar{z})}dz\, ,\\
\omega_2 & =\frac{2}{-1 + 2(2a-b)i}\left((\bar{z} - z)^{\frac{-2b + i}{4a}} dw + \frac{i}{2a(z - \bar{z})}dz \right)\, .
\end{aligned}
\end{equation*}

%
%
%

\subsubsection{Compact case of case $C2$}
By (\ref{eq:16}),
\begin{equation}\label{eq:103}
d(\omega_1\wedge\bar{\omega}_2\wedge\omega_2) = 2i(a +b)\omega_1\wedge\bar{\omega}_1\wedge\omega_2\wedge\bar{\omega}_2\, .
\end{equation}
If $a +b \neq 0$, the volume form is exact. By Stokes' Theorem, there cannot be a compact quotient that supports a homogeneous complex Engel structure when $a +b \neq 0$. In the following, only consider $a + b = 0$.  Define $\theta_1 = \omega_1$.   The structure equation is
\begin{equation}\label{eq:78}
\begin{aligned}
&d\theta_1 = 2ia\bar{\theta}_1\wedge \theta_1\, ,\\
&d\theta_2 = \left(\frac{1}{2} - ia\right)(\bar{\theta}_1 - \theta_1)\wedge\theta_2\, .
\end{aligned}
\end{equation}
Let $\theta_1 = \alpha + i\beta\, ,\theta_2 = \gamma + i\delta$ be real part and imaginary part decompositions. Then (\ref{eq:78}) is equivalent to
\begin{equation*}
\begin{aligned}
d\alpha &= -4a\alpha\wedge\beta\, ,\\
d\beta &= 0\, ,\\
d\gamma &= \beta\wedge\delta - 2a\beta\wedge\gamma\, ,\\
d\delta &= -\beta\wedge\gamma - 2a\beta\wedge\delta\, .
\end{aligned}
\end{equation*}
Let $X_1, X_2, X_3, X_4$ be left-invariant vector fields dual to the left-invariant forms $\alpha, \beta, \gamma, \delta$, respectively. Then the nontrivial brackets are
\begin{align*}
[X_2, X_1] &= 4aX_1\, ,\\
[X_2, X_4] &= X_3 - 2aX_4\, ,\\
[X_2, X_3] &= -2aX_3 - X_4\, .
\end{align*}
By \cite{MR3480018}, there exists a co-compact lattice for some $a$. We will calculate the conditions for the existence of a co-compact lattice.

Let $f = -2a + i$, $V = X_3 + iX_4$ and $W = X_3 - iX_4$. The nontrivial brackets are
\begin{align*}
[X_2, X_1] &= -(f+ \bar f)X_1\, ,\\
[X_2, V] &=  f\, V\, ,\\
[X_2, W] &= \bar f\, W\, .
\end{align*}
Since center of the Lie algebra $\mathfrak{g}$ is trivial, we have an exact sequence
\[0\rightarrow \mathfrak{g}\xrightarrow{ad} End(\mathfrak{g}),\]
where $\mathfrak{g}\xrightarrow{ad} End(\mathfrak{g})$ is the adjoint representation.
Let $X = xX_1 + yX_2 + z V + \bar{z} W$ be an element of $\mathfrak{g}$. Then
\begin{equation*}
ad(X) (X_1, V, W, X_2) = (X_1, V, W, X_2)\left[ {\begin{array}{cccc}
                                    -(f + \bar f)y & 0 & 0 & (f + \bar f)x\\
                                   0 & f y & 0 & -f z\\
                                   0 & 0 & \bar{f}y & -\bar{f}\bar{z}\\
                                   0 & 0 & 0 & 0\\
                                    \end{array} } \right]\, .
\end{equation*}
The connected and simply-connected Lie group corresponding to the Lie algebra $\mathfrak{g}$ is
\begin{equation*}
G\cong\left\{\left. \left[ {\begin{array}{cccc}
                                   y^{ -(f + \bar f)} & 0 & 0 & x\\
                                   0 & y^f & 0 & z\\
                                   0 & 0 & y^{\bar{f}} &\bar{z}\\
                                   0 & 0 & 0 & 1
                                    \end{array} } \right]
                                    \right | x,y \in\mathbb{R}, y > 0, z\in\mathbb{C}\right\}.
\end{equation*}

\begin{proposition}
If there exists a complex number $\lambda$ and a matrix $A\in SL_3(\mathbb{Z})$ such that
\begin{enumerate}
\item  $\left|\lambda\right|\neq 1$
\item the eigenvalues of $A$ are $(\lambda\bar\lambda)^{-1}, \lambda, \bar{\lambda}$
\item there exists $k\in\mathbb{Z}$ such that $-\tfrac{1}{2a} \log( \left|\lambda\right|) = \arg{\lambda} + 2k\pi$
\end{enumerate}
then there exists a co-compact lattice $\Gamma$ such that $G/\Gamma$ supports a homogeneous complex Engel structure.
\end{proposition}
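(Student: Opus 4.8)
The plan is to recognize the group $G$ as a semidirect product and to invoke the standard criterion for the existence of a co-compact lattice in a group of the form $\mathbb{R}\ltimes\mathbb{R}^n$. Reading off the matrix model, $G$ is the semidirect product $\mathbb{R}\ltimes_\Phi\mathbb{R}^3$ in which the parameter $t=\log y$ acts on $(x,z)\in\mathbb{R}\times\mathbb{C}\cong\mathbb{R}^3$ by $\Phi(t)(x,z)=(e^{4at}x,\,e^{ft}z)$ with $f=-2a+i$; equivalently $\Phi(t)=\exp(tD)$ for the derivation $D=\mathrm{ad}(X_2)$. Note that $\det\Phi(t)=e^{4at}\,|e^{ft}|^2=e^{4at}e^{-4at}=1$, so $\Phi(t)\in SL_3(\mathbb{R})$ for all $t$, and the group law is $(t_1,v_1)(t_2,v_2)=(t_1+t_2,\,v_1+\Phi(t_1)v_2)$. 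The key fact I would use is: if there is a nonzero $t_0$ for which $\Phi(t_0)$ is conjugate in $GL_3(\mathbb{R})$ to a matrix $A\in SL_3(\mathbb{Z})$, say $\Phi(t_0)=PAP^{-1}$, then $L=P\mathbb{Z}^3$ is a $\Phi(t_0)$-invariant lattice and $\Gamma=\{(nt_0,v):n\in\mathbb{Z},\,v\in L\}$ is a co-compact lattice in $G$.

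First I would compute the eigenvalues of $\Phi(t_0)$, namely $e^{4at_0}$ together with the conjugate pair $e^{(-2a\pm i)t_0}$. Setting $\lambda=e^{(-2a+i)t_0}$ gives $|\lambda|=e^{-2at_0}$ and $\arg\lambda\equiv t_0\pmod{2\pi}$, while the real eigenvalue is $e^{4at_0}=|\lambda|^{-2}=(\lambda\bar\lambda)^{-1}$. Thus the spectrum of $\Phi(t_0)$ is exactly $\{(\lambda\bar\lambda)^{-1},\lambda,\bar\lambda\}$, the prescribed spectrum of $A$. Conversely, matching spectra forces $t_0=-\tfrac{1}{2a}\log|\lambda|$ from the modulus equation and $t_0=\arg\lambda+2k\pi$ from the argument equation, and their compatibility is precisely hypothesis (3); hypothesis (1), $|\lambda|\neq1$, guarantees $t_0\neq0$ so that the $\mathbb{R}$-factor contributes a genuinely discrete translation, and hypothesis (2) supplies the integral matrix $A$ with this spectrum. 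This is where the three hypotheses enter.

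Next I would produce the conjugacy $\Phi(t_0)=PAP^{-1}$ over $\mathbb{R}$. Since $\lambda$ is non-real (a genuine conjugate pair) and $|\lambda|\neq1$, the three eigenvalues $(\lambda\bar\lambda)^{-1},\lambda,\bar\lambda$ are pairwise distinct: the real one $|\lambda|^{-2}$ cannot coincide with the non-real pair. Hence $\Phi(t_0)$ and $A$ are semisimple real matrices with the same characteristic polynomial, so both are conjugate over $\mathbb{R}$ to the common real canonical form built from the real eigenvalue and the $2\times2$ rotation-scaling block for the pair, and therefore to each other. With $P$ in hand I would set $L=P\mathbb{Z}^3$, check that $\Phi(t_0)L=PA\mathbb{Z}^3=P\mathbb{Z}^3=L$, and verify that $\Gamma=t_0\mathbb{Z}\ltimes L$ is a subgroup (using $\Phi(nt_0)=\Phi(t_0)^n$), discrete, and co-compact. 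Co-compactness is transparent: the quotient is the mapping torus of the hyperbolic automorphism $\Phi(t_0)\colon T^3\to T^3$, hence a $T^3$-bundle over $S^1$, which is compact. Finally, the canonical coframing $(\omega_1,\omega_2)$ is invariant under the $G$-action, hence under translation by $\Gamma$, so it descends to the compact quotient $G/\Gamma$ and equips it with the homogeneous complex Engel structure.

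The step I expect to be the main obstacle is the real-conjugacy claim $\Phi(t_0)\sim A$: equality of characteristic polynomials yields conjugacy only when the elementary divisors agree, so I must verify that the spectrum is simple, equivalently that $\lambda\notin\mathbb{R}$ and $|\lambda|\neq1$, which is exactly what rules out repeated eigenvalues and forces both matrices to be $\mathbb{R}$-diagonalizable up to the rotation block. The remaining verifications — that $\Gamma$ is a lattice and that the coframing descends — are routine once the invariant lattice $L$ has been constructed.
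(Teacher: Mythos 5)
Your proof is correct and follows essentially the same route as the paper: the paper likewise realizes $G$ as a semidirect product $\mathbb{R}_{>0}\ltimes\mathbb{R}^3$ (its normal subgroup $N$ with coordinates $r,z$), seeks a lattice $\langle\vec v_1,\vec v_2,\vec v_3\rangle$ of $N$ on which the diagonal action $\gamma_c$ is represented by an integral matrix $A$, and matches the eigenvalues $c^{-(f+\bar f)}, c^{f}, c^{\bar f}$ against $(\lambda\bar\lambda)^{-1},\lambda,\bar\lambda$ to derive condition (3) --- exactly your mapping-torus construction with $c=e^{t_0}$ and $P=(\vec v_1,\vec v_2,\vec v_3)$, except that you make explicit the real-conjugacy step the paper leaves implicit. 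The one assertion you should justify is that $\lambda$ is non-real (the hypotheses as stated do not say so), but it does follow from (1)--(2): if $\lambda$ were real it would be a repeated root of the monic integral characteristic polynomial of $A$, hence rational, hence an algebraic integer in $\mathbb{Z}$ whose inverse square $(\lambda\bar\lambda)^{-1}=\lambda^{-2}$ is also an integer root, forcing $\lambda=\pm1$ and contradicting $|\lambda|\neq1$.
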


\begin{proof}
Let $N\subset G$ be the subgroup
\begin{equation}
N = \left\{
\left.\left[ {\begin{array}{cccc}
                              
                                  1 & 0 & 0 &  r \\
                                  0 & 1  & 0 & z\\
                                  0 & 0 & 1 & \bar{z}\\
                                  0 & 0 & 0 & 1
                                    \end{array} } \right] \right\vert r\in \mathbb{R}, z\in\mathbb{C}\right\}\, .
\end{equation}
It is easy to verify that $N$ is a normal subgroup of $G$.
Thus
\[G/N \cong  \left\{
\left.\left[ {\begin{array}{cccc}
                               y^{ -(f + \bar f)} & 0 & 0 & 0\\
                                   0 & y^f & 0 & 0\\
                                   0 & 0 & y^{\bar{f}} & 0\\
                                   0 & 0 & 0 & 1
                                    \end{array} } \right]
                                    \right | y \in\mathbb{R}, y > 0\right\}  
\]
 is a quotient group.                                   
Let $L_1 = \langle \vec{v}_1, \vec{v}_2 ,\vec{v}_3 \rangle$ be a lattice of the normal subgroup $N$, to be determined later. We need to find a lattice $L_2$ of $G/N$ such that the lattice of the group $G$ is
\[L  = \left\{
\left.\left[ {\begin{array}{cc}
                              
                                  \gamma & \vec{v} \\
                                  0 & 1\\
                                    \end{array} } \right] \right\vert \text{ where } \gamma\in L_2, \vec{v}\in L_1\right\}\, .
\]

By the multiplication rule of the group $G$, this is equivalent to $\gamma \vec{v}\in L_1$ for any $\gamma \in L_2$ and any $\vec{v} =  \left[ {\begin{array}{c}v_1 \\ v_2 \\ v_3\end{array} } \right] \in L_1$. Hence we need to find $a_{ij}\in \mathbb{Z}$  and $c > 0$ and $c\neq 1$ such that
\begin{align}\label{eq:79}
\gamma_c \vec{v}_1 &= a_{11} \vec{v}_1 + a_{21} \vec{v}_2 + a_{31} \vec{v}_3\, ,\nonumber\\                             
\gamma_c \vec{v}_2 &= a_{12} \vec{v}_1 + a_{22} \vec{v}_2 + a_{32} \vec{v}_3\, ,\\
\gamma_c \vec{v}_3 &= a_{13} \vec{v}_1 + a_{23} \vec{v}_2 + a_{33} \vec{v}_3\, ,\nonumber                            
\end{align}
where $\gamma_c$ is the linear transform with transformation matrix $\left[ {\begin{array}{ccc}
                                   c^{ -(f + \bar f)} & 0 & 0\\
                                   0 & c^f & 0\\
                                   0 & 0 & c^{\bar{f}}\\
                                    \end{array} } \right]$. Since $\langle\gamma_c \vec{v}_1, \gamma_c \vec{v}_2, \gamma_c \vec{v}_3\rangle$ will be a new basis for the lattice $L_1$, then  
\[A = \left[ {\begin{array}{ccc}    
                                  a_{11} & a_{12} & a_{13} \\
                                  a_{21} & a_{22} & a_{23}\\
                                  a_{31} & a_{32} & a_{33}\\
                                    \end{array} } \right] \in SL_3(\mathbb{Z}).
\]
                                    
(\ref{eq:79}) is equivalent to
\begin{equation}
\left[ {\begin{array}{ccc}
                                   c^{ -(f + \bar f)} & 0 & 0\\
                                   0 & c^f & 0\\
                                   0 & 0 & c^{\bar{f}}\\
                                    \end{array} } \right] = (\vec{v}_1, \vec{v}_2, \vec{v}_3) \left[ {\begin{array}{ccc}    
                                  a_{11} & a_{12} & a_{13} \\
                                  a_{21} & a_{22} & a_{23}\\
                                  a_{31} & a_{32} & a_{33}\\
                                    \end{array} } \right]  (\vec{v}_1, \vec{v}_2, \vec{v}_3)^{-1}
\end{equation}                                    
The eigenvalues of the matrix $A$ should be $c^{ -(f + \bar f)}, c^f$ and $c^{\bar{f}}$ for some $c > 0$.  Assume the eigenvalues are $(\lambda\bar\lambda)^{-1}, \lambda, \bar{\lambda}$ and fix
\begin{equation}\label{eq:80}
c^{-2a + i} = \lambda\, .\\
\end{equation}
Thus
\begin{equation}\label{eq:82}
c = \left|\lambda\right|^{-\tfrac{1}{2a}}\, .
\end{equation}
By (\ref{eq:80}),
\begin{equation}
\left|\lambda\right| \times \left|\lambda\right| ^ {-\tfrac{1}{2a} i} = \left|\lambda\right|\times e^{i\arg{\lambda}}\, .
\end{equation}
Thus the eigenvalue $\lambda$ and the parameter $a$ satisfy
\begin{equation}\label{eq:81}
-\tfrac{1}{2a} \log( \left|\lambda\right|) = \arg{\lambda} + 2k\pi 
\end{equation}
for certain $k\in\mathbb{Z}$.
\end{proof}

We will calculate an explicit condition on the existence of co-compact lattice. Since the eigenvalues are $(\lambda\bar\lambda)^{-1}, \lambda, \bar{\lambda}$, the characteristic polynomial of the matrix $A$ is
\begin{equation}
\begin{aligned}
&(x - (\lambda\bar\lambda)^{-1})(x - \lambda)(x - \bar{\lambda})\\
&= x^3 - \left(\left(\lambda\bar\lambda\right)^{-1} + \lambda + \bar{\lambda}\right)x^2 + \left(\lambda\left(\lambda\bar\lambda\right)^{-1} + \bar{\lambda} \left(\lambda\bar\lambda\right)^{-1} + \lambda\bar{\lambda} \right)x - 1\, .
\end{aligned}
\end{equation}
Since $A \in SL_3(\mathbb{Z})$,  there exist $m, n \in \mathbb{Z}$ such that
\begin{equation}\label{eq:104}
\begin{aligned}
\left(\lambda\bar\lambda\right)^{-1} + \lambda + \bar{\lambda} &= m\, ,\\
\lambda\left(\lambda\bar\lambda\right)^{-1} + \bar{\lambda} \left(\lambda\bar\lambda\right)^{-1} + \lambda\bar{\lambda} &= n\, .
\end{aligned}
\end{equation}
Note if (\ref{eq:104}) satisfies, we can choose $A = \left[ {\begin{array}{ccc}
                                   0 & 0 & 1\\
                                   1 & 0& -n\\
                                   0 & 1 & m\\
                                    \end{array} } \right]\in SL_3(\mathbb{Z})$.                                    

Define $p = \lambda + \bar{\lambda}$ and $q = \left(\lambda\bar\lambda\right)^{-1}$. It is easy to verify that $p$ and $q$ are real numbers and $\tfrac{4}{q} \ge p^2$. To be a co-compact lattice, $0 < q <1$. Then by (\ref{eq:104}),
\begin{equation}\label{eq:105}
\begin{aligned}
p + q &= m\, ,\\
pq + \frac{1}{q} &= n\, .
\end{aligned}
\end{equation}
Then the eigenvalue is $\lambda = \frac{p}{2} + i\sqrt{\tfrac{1}{q} - \tfrac{p^2}{4}} $. 
\begin{remark}
There exist countably infinite families of solutions for $p$ and $q$, that yield infinitely many families of co-compact lattices. The co-compact lattices can be derived from solutions of (\ref{eq:105}).
\end{remark}

Now we give an example for some $a$ such that there exists a compact quotient.
\begin{example}
Let $\left[ {\begin{array}{ccc}
                                   0 & 0 & 1\\
                                   1 & 0& 1\\
                                   0 & 1 & 0\\
                                    \end{array} } \right]\in SL_3(\mathbb{Z})$. Define $s = (108 + 12\sqrt{69})^{\frac{1}{3}}$, then
 \[\lambda = -\frac{s}{12} - \frac{1}{s} + \frac{\sqrt{3}\left(\frac{s}{6} - \frac{2}{s} \right)}{2}i \, .\]
\noindent We can calculate $a$ by (\ref{eq:81}) and $c$ by (\ref{eq:82}).
\end{example}

\subsection{Homogeneous Case C3}
Now
\[(p_1, p_2, q_1, q_2, r_1, r_2) = \Big(\tfrac{1}{2} - ib,\  0,\  2ib,\ \tfrac{1}{2}(2b + i)(2ia - b),\ 2b^2 - ib,\ (b^2 + \tfrac{1}{4})(2a + ib)\Big)\]
Assume at least one of $a$ or $b$ is nonzero. Otherwise, it will be a special case of $C1$ with $a = \frac{1}{2}$ and $b = 0$. The structure equation is
\begin{equation}\label{eq:17}
\begin{aligned}
d\omega_1& = -[-2ib\bar{\omega}_1 + \frac{1}{2}(2b-i)(-2ia - b) \bar{\omega}_2]\wedge\omega_1\\ 
&\ \ \ -\left[\frac{1}{2}(2b + i)(2ia - b)\omega_1 + (2b^2 + ib)\bar{\omega}_1 + \left(b^2 + \frac{1}{4}\right)(2a - ib)\bar{\omega}_2\right]\wedge\omega_2\, ,\\
d\omega_2& = \omega_1\wedge\bar{\omega}_1 - \left(\frac{1}{2} - i b\right)(\omega_1 - \bar{\omega}_1)\wedge\omega_2\, .
\end{aligned}
\end{equation}
Define $\theta = \omega_1 + \left(-\frac{1}{2} + i b\right)\omega_2$. By (\ref{eq:17}), we have
\begin{equation}\label{eq:18}
d\theta =\left[\left(\tfrac{1}{2} + ib\right)\left(\bar{\omega}_1 + \left(2a - ib\right)\bar{\omega}_2\right) - \left(\tfrac{1}{2} + 2a\right)\omega_1\right]\wedge\theta\, .
\end{equation}

Since the symmetry groups are different for different parameters, we will consider the structure equation with different parameters: 
\begin{enumerate}
\item $a = -\tfrac{1}{4}$
\item $a = b^2 \neq 0$
\item $a \neq -\tfrac{1}{4}$ and $a \neq b^2$
\end{enumerate}
\subsubsection{ $a = -\frac{1}{4}$ }
The structure equation (\ref{eq:18}) reduces to 
\begin{align}\label{eq:19}
d\Big( \omega_1 + \Big(-\frac{1}{2} + i b\Big)\omega_2\Big) &= \Big(\frac{1}{2} + ib\Big)\overline{\Big(\omega_1 + \Big(-\frac{1}{2} + ib\Big)\omega_2\Big)}\\
&\ \ \ \ \ \ \wedge \Big(\omega_1 + \Big(-\frac{1}{2} + ib\Big)\omega_2\Big)\nonumber\, .
\end{align}

Let $\omega_1 + \Big(-\frac{1}{2} + ib\Big)\omega_2 = \alpha + i\beta$, where $\alpha$ and $\beta$ are real 1-forms. Then
\begin{equation}\label{eq:107}
\begin{aligned}
d\alpha &= -2b \alpha\wedge\beta\, ,\\
d\beta &=\alpha\wedge\beta\, .
\end{aligned}
\end{equation}
Since $d(\alpha + 2b\beta) = 0$, there exists function $x$ such that $\alpha + 2b\beta = dx$.
By the structure equation, we have $d\beta = dx\wedge \beta$, that implies the existence of a function $y$ such that $\beta = e^x dy$. Thus in terms of local coordinate $x,\ y$,
\begin{equation*}
\begin{aligned}
\alpha &= dx - 2be^x dy\, ,\\
\beta &=e^x dy\, .
\end{aligned}
\end{equation*}
So
\begin{equation*}
\omega_1 + \left(-\frac{1}{2} + ib\right)\omega_2 = e^x d(-e^{-x} - 2b y + i y)\, .
\end{equation*}
Let $z = -e^{-x} - 2b y + i y$. Then
\[\omega_1 + \Big(-\frac{1}{2} + ib\Big)\omega_2 = \frac{dz}{\Big(-\frac{1}{2} + i b\Big)z + \overline{\Big(-\frac{1}{2} + i b\Big)z}}\, .\]
By (\ref{eq:17}), we have
\begin{equation}\label{eq:87}
\begin{aligned}
d\omega_2& = \omega_1\wedge\bar{\omega}_1 - \left(\frac{1}{2} - i b\right)(\omega_1 - \bar{\omega}_1)\wedge\omega_2\\
&= \frac{\Big(-\frac{1}{2} + i b\Big) dz\wedge \omega_2}{\Big(-\frac{1}{2} + i b\Big) z + \overline{\Big(-\frac{1}{2} + i b\Big) z}} + \frac{-\overline{\Big(-\frac{1}{2} + i b\Big) } dz\wedge\bar{\omega}_2}{\Big(-\frac{1}{2} + i b\Big) z + \overline{\Big(-\frac{1}{2} + i b\Big) z}}\\
&\ \ \ \ \ + \frac{dz\wedge d\bar{z}}{\Bigg(\Big(-\frac{1}{2} + i b\Big) z + \overline{\Big(-\frac{1}{2} + i b\Big) z}\Bigg)^2}\, .
\end{aligned}
\end{equation}

\begin{lemma}\label{lemma}
Assume $\langle dz, \omega_2\rangle$ forms a Frobenius system. Then there exists a function $f$ and a holomorphic function $w$ such that
\[\omega_2 = dw + f dz\]
\end{lemma}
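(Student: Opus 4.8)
The plan is to read the Frobenius hypothesis through the structure equation (\ref{eq:87}) and then produce $w$ by a single holomorphic integration. The key observation is that in (\ref{eq:87}) every term of $d\omega_2$ carries a factor of $dz$ on the left, so the usable content of the hypothesis is the strong closure relation
\[ d\omega_2 \equiv 0 \ \bmod\ dz, \]
i.e.\ $d\omega_2 = dz\wedge\Phi$ for some $1$-form $\Phi$. I would emphasize that this is strictly stronger than closure modulo the whole system $\langle dz,\omega_2\rangle$: the latter is automatic on a complex surface and is too weak to force the conclusion, so it is (\ref{eq:87}) that carries the real weight.

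First I would complete $z$ to a local holomorphic chart. Since $z$ is holomorphic and $dz\neq 0$, there is a holomorphic $\zeta$ making $(z,\zeta)$ a holomorphic coordinate system, so $\{dz,d\zeta\}$ is a $(1,0)$-coframe. Because $dz$ is a combination of $\omega_1,\omega_2$ with nonvanishing $\omega_1$-coefficient, the pair $\{dz,\omega_2\}$ spans the same $(1,0)$-space as $\{\omega_1,\omega_2\}$, and I may write
\[ \omega_2 = P\,dz + Q\,d\zeta, \]
with $P,Q$ smooth and $Q$ nowhere zero by the linear independence of $\omega_2$ and $dz$.

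Next I would extract holomorphicity of $Q$ from the closure relation. Computing $d\omega_2 = dP\wedge dz + dQ\wedge d\zeta$ and reducing modulo $dz$ gives
\[ d\omega_2 \equiv (Q_{\bar z}\,d\bar z + Q_{\bar\zeta}\,d\bar\zeta)\wedge d\zeta \ \bmod\ dz. \]
Since the $(1,1)$-forms $d\bar z\wedge d\zeta$ and $d\bar\zeta\wedge d\zeta$ are linearly independent modulo $dz$, the relation $d\omega_2\equiv 0 \bmod dz$ forces $Q_{\bar z}=Q_{\bar\zeta}=0$, i.e.\ $Q$ is holomorphic. This step — converting the closure relation into holomorphicity of the leading coefficient — is the heart of the matter, and it is exactly where the precise shape of (\ref{eq:87}) is indispensable; I expect it to be the main obstacle, in the sense that the naive reading of ``Frobenius system'' does not suffice (one can check that a form such as $\omega_2=\bar z\,d\zeta$ is closed modulo $\langle dz,\omega_2\rangle$ yet admits no expression $dw+f\,dz$ with $w$ holomorphic).

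Finally, with $Q$ a nonvanishing holomorphic function I would set $w=\int Q\,d\zeta$, a holomorphic function with $\partial w/\partial\zeta=Q$ and $\partial w/\partial z$ holomorphic. Then $(z,w)$ is again a holomorphic chart, the Jacobian determinant being $\partial w/\partial\zeta=Q\neq 0$, and $\omega_2-dw=(P-\partial w/\partial z)\,dz$, so $\omega_2 = dw + f\,dz$ with $f=P-\partial w/\partial z$, as claimed. Equivalently, one may phrase the argument through the complex Frobenius theorem used earlier: it gives $\langle dz,\omega_2\rangle=\langle dz,dw_0\rangle$ for some holomorphic $w_0$, whence $\omega_2=g\,dw_0+f_0\,dz$; the relation $d\omega_2\equiv 0\bmod dz$ then forces $g$ to be holomorphic, and absorbing $g$ into $w_0$ by a holomorphic reparametrization normalizes the $dw$-coefficient to $1$.
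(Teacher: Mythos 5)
Your proof is correct, and it follows the same overall skeleton as the paper's — complete $z$ to a holomorphic chart, show that the coefficient of the second coordinate differential in $\omega_2$ is holomorphic, then integrate that coefficient to produce $w$ — but the justification of the central step is genuinely different, and sounder. The paper's proof invokes the Frobenius theorem to produce first integrals $u,v$ spanning the system, takes $(z,u)$ as (implicitly holomorphic) coordinates, and then cites the complex Frobenius theorem to write $\omega_2 = r(z,u)\,du + s(z,u)\,dz$ with coefficients depending only on $(z,u)$; the subsequent manipulation $d\bigl(\int r\,du\bigr) = r\,du + \bigl(\int r_z\,du\bigr)dz$ and the holomorphy of $w=\int r\,du$ require exactly that $r$ be holomorphic in $(z,u)$, but this is asserted rather than derived. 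As your counterexample $\omega_2=\bar z\,d\zeta$ shows, it does not follow from the stated hypothesis: $\langle dz,\ \bar z\,d\zeta\rangle$ is differentially closed (as is any rank-$2$ system of $(1,0)$-forms on a complex surface, since $d$ of a $(1,0)$-form has no $(0,2)$ part) and is even spanned by the exact differentials $dz, d\zeta$, yet $\bar z\,d\zeta = dw + f\,dz$ with $w$ holomorphic is impossible because it would force $\partial w/\partial\zeta = \bar z$. You instead identify the hypothesis that is actually available at the point of application — the strong relation $d\omega_2\equiv 0 \bmod dz$, visible in (\ref{eq:87}) where every term of $d\omega_2$ carries a $dz$ factor — and convert it by a two-line computation into $Q_{\bar z}=Q_{\bar\zeta}=0$. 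What your route buys is a self-contained argument that also pinpoints, and repairs, the imprecision in the lemma's hypothesis; what the paper's route buys is brevity, at the cost of leaving the key holomorphicity of the coefficients unproved.
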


\begin{proof}
Since $\langle dz, \omega_2\rangle$ forms a Frobenius system of rank 2, there exist functions $u,\ v,\ a,\ b$ such that $dz = a du + b dv$ and $du, \ dv$ are linearly independent . Since $dz \neq 0$, at least one of $a, \ b$ is nonzero. Without loss of generality, assume $b\neq 0$. So $dv = \frac{1}{b} (dz - a du)$. So $v$ is a function of $z$ and $x$ and locally we take $z$ and $u$, instead of $v$ and $u$, as local coordinates.

By the complex Frobenius Theorem, there exist functions $r(z, u)$ and $s(z, u)$ such that
\begin{align*}
\omega_2 &= r(z, u) du + s(z, u) dz\\
         &=d\Big(\int r(z, u)\  du\Big) - \Big(\int \frac{\partial r(z, u)}{\partial z} du \Big) dz + s(z, u) dz\\
         &= d\Big(\int r(z, u)\  du\Big) + \Big[ s(z, u) - \Big(\int \frac{\partial r(z, u)}{\partial z} du \Big)  \Big] dz\, .
\end{align*} 
Since $\omega_2$ and $dz$ are linearly independent,  $d\Big(\int r(z, u)\  du\Big) \neq 0$. Let $ w = \Big(\int r(z, u)\  du\Big) $ and $f =  s(z, u) - \Big(\int \frac{\partial r(z, u)}{\partial z} du \Big) $, then
\[\omega_2 = dw + f dz\, .\]
\end{proof}

Let $D = -\frac{1}{2} + i b $. By (\ref{eq:87}) and Lemma \ref{lemma}, we get
\[df\wedge dz = \frac{dz}{Dz + \bar{D}\bar{z}} \wedge \left( D dw - \bar{D} d\bar{w} + \frac{d \bar{z}}{Dz + \bar{D}\bar{z}} - \bar{D}\bar{f} d\bar{z} \right)\, .\]
Thus
\begin{equation}\label{eq:66}
\begin{aligned}
\frac{\partial f}{\partial w} &= \frac{-D}{Dz + \bar{D}\bar{z}}\, ,\\
\frac{\partial f}{\partial \bar{w}} &= \frac{\bar{ D}}{Dz + \bar{D}\bar{z}}\, ,\\
\frac{\partial f}{\partial \bar{z}} &= \frac{\bar{D}\bar{f} - \frac{1}{Dz + \bar{D}\bar{z}}}{Dz + \bar{D}\bar{z}}\, .
\end{aligned}
\end{equation}

Let $f = f_1 + if_2$, where $f_1$ and $f_2$ are real and imaginary parts of $f$, respectively. Let $z = x+ i y$ and $w = u + i v$, then (\ref{eq:66}) is equivalent to
\begin{equation}
\begin{aligned}
\frac{\partial f_1}{\partial u} & = 0\, ,\\
\frac{\partial f_1}{\partial v} & = 0\, ,\\
\frac{\partial f_2}{\partial u} & = \frac{2b}{x+ 2by}\, ,\\
\frac{\partial f_2}{\partial v} & = -\frac{1}{x+ 2by}\, ,\\
\frac{\partial f_1}{\partial x} - \frac{\partial f_2}{\partial y}& = \frac{f_1 + 2bf_2 - \frac{2}{x + 2by}}{x + 2by}\, ,\\
\frac{\partial f_1}{\partial y} + \frac{\partial f_2}{\partial x} &= \frac{ 2bf_1 - f_2}{x + 2by}\, .
\end{aligned}
\end{equation}
So $f_1 = f_1(x, y), f_2 = g(x, y) + \frac{2b u}{x+ 2by} - \frac{v}{x+ 2by}$. The equation is equivalent to
\begin{equation}
\begin{aligned}
\frac{\partial f_1}{\partial x} - \frac{\partial g}{\partial y}& = \frac{f_1 + 2bg - \frac{2}{x + 2by}}{x + 2by}\, ,\\
\frac{\partial f_1}{\partial y} + \frac{\partial g}{\partial x} &= \frac{ 2bf_1 - g}{x + 2by}\, .
\end{aligned}
\end{equation}
Consider the differential ideal $I = \langle\theta_1,\ \theta_2\rangle$, where
\begin{equation}
\begin{aligned}
\theta_1 & =  df_1 - pdx - qdy\, ,\\
\theta_2 & = dg + \left(q -  \frac{ 2bf_1 - g}{x + 2by} \right) dx - \left(p -  \frac{f_1 + 2bg - \frac{2}{x + 2by}}{x + 2by} \right)dy\, .
\end{aligned}
\end{equation}
and
\begin{equation}
\begin{aligned}
d\theta_1 & = - \pi_1\wedge dx - \pi_2\wedge dy\, ,\\
d\theta_2 & = \pi_2\wedge dx - \pi_1 \wedge dy\, ,
\end{aligned}
\end{equation}
where $\pi_1 \equiv dp \ \mod\ (dx)$ and $\pi_2 \equiv dq \ \mod \ (dy)$. This system is involutive and its Cartan characters are $(s_1,s_2) = (2, 0)$. So the solution depends on 2 functions of 1 variable. 

We will calculate the coframing in local coordinate for  $b = 0$. Let $F_1 = F(-iz)$, $F_2 = G(i\bar{z})$ be two functions of one variable $z$ and $C$ be a constant. The general solution is of the following form
\begin{equation*}
\begin{aligned}
f_1 & = F_1' + F_2' + \left( \frac{4}{(z + \bar{z})^2} + C  \right)\frac{z + \bar{z}}{2}\, ,\\
f_2 & = iF_2' - iF_1' -\frac{2}{z + \bar{z}} \left(F_1 + F_2 + \frac{w - \bar{w}}{2i}\right)\, .
\end{aligned}
\end{equation*}

Thus in the case $b = 0$, the coframing is
\begin{equation*}
\begin{aligned}
\omega_1  &= \frac{1}{2}(dw + f dz) - \frac{2dz}{z + \bar{z}}\, ,\\
\omega_2 &= dw + f dz\, .
\end{aligned}
\end{equation*}

In our original parametrization, $z = -e^{-x} - 2b y + i y$. So $z + \bar{z} < 0$. Take a special form $F_1 = 0$ and $F_2 = 0$. Then
\[f(z, w) = \frac{2- (w - \bar{w})}{z + \bar{z}}\, .\]
The coframing can be written as
\begin{equation*}
\begin{aligned}
\omega_1  &= \frac{1}{2}\left(dw -  \frac{2+ (w - \bar{w})}{z + \bar{z}} dz\right)\, ,\\
\omega_2 &= dw +  \frac{2- (w - \bar{w})}{z + \bar{z}} dz\, .
\end{aligned}
\end{equation*}

We will prove that there exist compact quotients that support homogeneous complex Engel structures. Before proving this, we need to know the Lie algebra of the homogeneous complex Engel structures.
\begin{proposition}
There exists a basis $(X_1, X_2, X_3, X_4)$ such that the nontrivial brackets of the Lie algebra are
\begin{equation}
[X_2, X_3] = X_1,\, [X_2, X_4] = X_2, \, [X_3, X_4] = - X_3.
\end{equation}
\end{proposition}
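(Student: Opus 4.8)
The plan is to exploit the rigidity already established: for a homogeneous complex Engel structure the canonical coframing $(\omega_1,\omega_2)$ of Theorem \ref{theo:coframing} is an $e$-structure whose structure functions are constant, and by the corollary to that theorem its symmetry group acts freely, hence (being $4$-dimensional acting on a $4$-manifold) simply transitively. Thus the manifold is locally the symmetry group $G$, the coframing is a left-invariant Maurer--Cartan coframing, and the structure constants of $\mathfrak g=\operatorname{Lie}(G)$ are exactly those appearing in the structure equations. Concretely, if I fix any real coframing $(\theta^1,\theta^2,\theta^3,\theta^4)$ built from $(\omega_1,\omega_2)$ and write $d\theta^i=\sum_{j<k}C^i_{jk}\theta^j\wedge\theta^k$ with constants $C^i_{jk}$, then the dual frame $(e_1,e_2,e_3,e_4)$ satisfies $[e_j,e_k]=-\sum_i C^i_{jk}\,e_i$, and this algebra is (up to isomorphism) the symmetry algebra. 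So the whole proof reduces to computing these brackets and then choosing a good basis.

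First I would put $a=-\tfrac14$ into (\ref{eq:17}) and pass to a real coframing. The natural choice uses $\theta=\omega_1+(-\tfrac12+ib)\omega_2=\alpha+i\beta$, for which (\ref{eq:19}) immediately gives $d\alpha=-2b\,\alpha\wedge\beta$ and $d\beta=\alpha\wedge\beta$, together with the real and imaginary parts $\gamma,\delta$ of $\omega_2$. Re-expressing $\omega_1=\theta+(\tfrac12-ib)\omega_2$ and substituting into $d\omega_2=\omega_1\wedge\bar\omega_1-(\tfrac12-ib)(\omega_1-\bar\omega_1)\wedge\omega_2$, I expect the remaining two equations to collapse to $d\gamma=-2b\,\beta\wedge\gamma+\beta\wedge\delta$ and $d\delta=-2\,\alpha\wedge\beta-\alpha\wedge\delta+2b\,\alpha\wedge\gamma$; the cleanest consistency check is that $d^2=0$ on all four forms. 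Reading off the $C^i_{jk}$ then gives the brackets of the dual frame, namely $[e_1,e_2]=2b\,e_1-e_2+2e_4$, $[e_1,e_3]=-2b\,e_4$, $[e_1,e_4]=e_4$, $[e_2,e_3]=2b\,e_3$, $[e_2,e_4]=-e_3$, and $[e_3,e_4]=0$.

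The remaining and genuinely non-automatic step is to recognize this as the stated Lie algebra and to exhibit the basis. I would compute the derived series: the derived algebra $\mathfrak g'=[\mathfrak g,\mathfrak g]=\langle 2b\,e_1-e_2,\,e_3,\,e_4\rangle$ is three-dimensional, and $[\mathfrak g',\mathfrak g']$ is spanned by the single central vector $e_3+2b\,e_4$, which forces $X_1=e_3+2b\,e_4$; one then checks $\mathfrak g'$ is a Heisenberg algebra, with $[\,2b\,e_1-e_2,\,e_4\,]=X_1$. The subtle point is the grading element: taking $X_4=e_1$ makes $\operatorname{ad}(X_4)$ act on $\mathfrak g'$ with eigenvalues $0,\pm1$, but its $(-1)$-eigenvector is \emph{not} $e_2$, so the naive choice fails to close up into the Heisenberg-plus-grading normal form, and one must use the true eigenvector. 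I expect the clean answer
\begin{equation*}
X_1=e_3+2b\,e_4,\qquad X_2=2b\,e_1-e_2+e_4,\qquad X_3=e_4,\qquad X_4=e_1,
\end{equation*}
after which a direct computation gives $[X_2,X_3]=X_1$, $[X_2,X_4]=X_2$, $[X_3,X_4]=-X_3$, with $X_1$ central and all other brackets zero. I anticipate that this final linear-algebra adjustment is the only place requiring care; the derivation of the structure equations and the reading-off of brackets are routine.
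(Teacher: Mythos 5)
Your proof is correct, and I verified the computations: substituting $\omega_1=\theta+(\tfrac12-ib)\omega_2$ into (\ref{eq:17}) does give $d\omega_2=\theta\wedge\bar\theta+(\tfrac12+ib)\theta\wedge\bar\omega_2-(\tfrac12-ib)\theta\wedge\omega_2$, whose real and imaginary parts are exactly your $d\gamma=\beta\wedge\delta-2b\,\beta\wedge\gamma$ and $d\delta=-2\alpha\wedge\beta-\alpha\wedge\delta+2b\,\alpha\wedge\gamma$, and your final basis $(X_1,X_2,X_3,X_4)=(e_3+2be_4,\;2be_1-e_2+e_4,\;e_4,\;e_1)$ satisfies $[X_2,X_3]=X_1$, $[X_2,X_4]=X_2$, $[X_3,X_4]=-X_3$ with $X_1$ central and all other brackets zero. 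However, your route differs from the paper's in a substantive way. The paper decomposes $\omega_1=\gamma+i\delta$ (not $\omega_2$), obtaining the messier brackets $[e_1,e_2]=-2be_1+e_2-2be_3+e_4$, $[e_1,e_4]=-2be_3-e_4$, $[e_2,e_4]=e_3-2be_4$, and is then forced into a case split $b=0$ versus $b\neq0$ with successive ad hoc substitutions ($\tilde e_1$, $\tilde e_4$, $\tilde{\tilde e}_1$, and a final rescaled basis) whose coefficients involve $\tfrac{1}{2b}$ and $\tfrac{1}{4b^2+1}$. Your decomposition of $\omega_2$ instead reproduces precisely the real structure equations the paper later derives in the compact Case C5 analysis, and it buys a single basis change uniform in $b$, together with a conceptual explanation the paper omits: the derived algebra $\mathfrak g'=\langle 2be_1-e_2,\,e_3,\,e_4\rangle$ is Heisenberg with $[\mathfrak g',\mathfrak g']$ spanned by $e_3+2be_4$ (forcing $X_1$ up to scale), and $X_2$ is the true $(-1)$-eigenvector of $\operatorname{ad}(e_1)$ on $\mathfrak g'$, which is exactly the point where a naive choice would fail to close up. Two small remarks: your brackets are the negatives of the paper's because you use the standard Maurer--Cartan convention $[e_j,e_k]=-\sum_i C^i_{jk}e_i$ while the paper uses the opposite sign; this is immaterial, since negating the bracket is absorbed by $X_1\mapsto -X_1$, $X_4\mapsto -X_4$, so both conventions reach the same normal form. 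And your opening justification (Kobayashi freeness plus constant structure functions, hence a locally simply transitive symmetry group and a Maurer--Cartan coframing) is a legitimate filling-in of what the paper only asserts implicitly when it calls $\alpha,\beta,\gamma,\delta$ ``left-invariant forms.''
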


\begin{proof}
Recall that
\begin{equation}
\begin{aligned}
d\alpha &= -2b \alpha\wedge\beta\, ,\\
d\beta &=\alpha\wedge\beta\, .
\end{aligned}
\end{equation}
Define $\omega_1 = \gamma + i\delta$. 
\begin{equation}\label{eq:108}
\begin{aligned}
d\gamma &= -2b\alpha\wedge\beta - \delta\wedge\beta + 2b\delta\wedge\alpha\, ,\\
d\delta &= \alpha\wedge\beta - \alpha\wedge\delta - 2b\beta\wedge\delta\, .
\end{aligned}
\end{equation}
Let $e_1, e_2, e_3, e_4$ be left-invariant vector fields dual to the left-invariant forms $\alpha, \beta, \gamma, \delta$, respectively. Then by (\ref{eq:108}), the nontrivial brackets are
\begin{align}
[e_1, e_2] &= -2be_1 + e_2 - 2be_3 + e_4\nonumber\\
[e_1, e_4] &= -2be_3 - e_4\\
[e_2, e_4] &= e_3 - 2be_4\nonumber\, .
\end{align}
We will consider 2 separate cases:
\begin{enumerate}
\item $b = 0$
\item $b \neq 0$
\end{enumerate}
If $b = 0$, the nontrivial brackets are
\begin{align}
[e_1, e_2] &= e_2 + e_4\nonumber\\
[e_1, e_4] &= - e_4\nonumber\\
[e_2, e_4] &= e_3\nonumber\, .
\end{align}
Define $X_1 = -2e_3, X_2 = e_4, X_3 = 2e_2 + e_4, X_4 = e_1$. Then
\begin{equation}
[X_2, X_3] = X_1,\, [X_2, X_4] = X_2, \, [X_3, X_4] = - X_3\, .
\end{equation}

If $b\neq 0$, define $\tilde e_1 = e_1 - \tfrac{1}{2b} e_2 - \tfrac{1}{2b}\left(e_4 - \tfrac{1}{2b}e_3\right), \tilde e_4 = e_4 - \tfrac{1}{2b}e_3$. The nontrivial brackets are
\begin{align}
[\tilde e_1, e_2] &= -2b\tilde e_1 + \tilde e_4 + \left(\tfrac{1}{2b} - 2b\right) e_3\nonumber\\
[\tilde e_1, \tilde e_4] &= - \left(2b + \tfrac{1}{2b}\right) e_3\nonumber\\
[e_2, \tilde e_4] &= -2b\tilde e_4\nonumber
\end{align}
Define $\tilde{\tilde e}_1 = \tilde e_1 - \tfrac{1}{4b}\tilde e_4$. Then the nontrivial brackets are
\begin{align}
[\tilde{\tilde e}_1, e_2] &= -2b\tilde{\tilde e}_1 + \left(\tfrac{1}{2b} - 2b\right) e_3\nonumber\\
[\tilde{\tilde e}_1, \tilde e_4] &= - \left(2b + \tfrac{1}{2b}\right) e_3\nonumber\\
[e_2, \tilde e_4] &= -2b\tilde e_4\nonumber
\end{align}
Define $X_1 = -\tfrac{4b^2 + 1}{2b} e_3, X_2 = \tilde{\tilde e}_1, X_3 = \tilde e_4, X_4 = \tfrac{1}{4b^2 + 1}\left(\left(-2b-\tfrac{1}{2b}\right) e_2 - \left(-2b+\tfrac{1}{2b}\right) e_4\right)$. Then
\begin{equation}
[X_2, X_3] = X_1,\, [X_2, X_4] = X_2, \, [X_3, X_4] = - X_3\, .
\end{equation}
Thus the theorem is true for both cases.
\end{proof}

%
By \cite{MR3480018}, there exists a co-compact lattice when $a = -\tfrac{1}{4}$. Thus there exists a compact quotient of type $C3$ when $a = -\tfrac{1}{4}$.

\subsubsection{$a = b^2 \neq 0$}
Define
\begin{equation}\label{eq:20}
\theta_1 =\omega_1 + (2b^2 + ib)\omega_2 + \frac{1}{4b^2 + 1}((4b^2 - 1) -4bi)\bar{\omega}_1 + \frac{b}{4b^2 + 1}(2b(4b^2 - 3) -(12b^2 - 1)i)\bar{\omega}_2
\end{equation}
Then the coframing $(\theta, \theta_1)$ satisfies
\begin{equation*}
\begin{aligned}
d\theta_1  &=0\, ,\\
d\theta &=  \left(-\tfrac{1}{2} + i b\right)\theta_1\wedge\theta\, .
\end{aligned}
\end{equation*}
Thus there exists a holomorphic coordinate system $(z, w)$ such that $\theta_1$ and $\theta$ can be written as linear combinations of $(dw,\ d\bar{w})$ and $dz$, respectively.  There exists a function $f(w,\bar{w})$  such that
\begin{equation*}
\begin{aligned}
\theta_1  &= df\, ,\\
\theta &=  e^{(-\frac{1}{2} + i b)f}dz\, .
\end{aligned}
\end{equation*}
Thus 
\[\omega_1 = e^{\left(-\tfrac{1}{2} + i b\right)f}dz - \left(-\tfrac{1}{2} + i b\right)\omega_2\, .\]
By (\ref{eq:20}), we have
\begin{align*}
&\ \ \ \ \Big(2b^2 - \frac{1}{2} - 2ib\Big)\bar{\omega}_2 + \Big(2b^2 + \frac{1}{2}\Big)\omega_2\\
&=df - e^{(-\frac{1}{2} + i b)f}dz- \tfrac{1}{4b^2 + 1}\Big((4b^2 - 1) -4bi\Big) e^{(-\frac{1}{2} - i b)\bar{f}}d\bar{z}\, .
\end{align*}
Let $\omega_2 = hdz + gdw$, where $h$ and $g$ are functions. By scaling, fix $g = 1$. Take $f = \Big(2b^2 + \frac{1}{2}\Big)w + \Big(2b^2 - \frac{1}{2} - 2ib\Big)\bar{w}$, then
\[h = -\frac{2e^{(-\frac{1}{2} + i b)\left[\Big(2b^2 + \frac{1}{2}\Big)w + \Big(2b^2 - \frac{1}{2} - 2ib\Big)\bar{w}\right]}}{4b^2 + 1}\, .\]
So
\[\omega_2 = -\frac{2e^{(-\frac{1}{2} + i b)\left[\left(2b^2 + \frac{1}{2}\right)w + \left(2b^2 - \frac{1}{2} - 2ib\right)\bar{w}\right]}}{4b^2 + 1}dz + dw\]
and
\[\omega_1 = e^{(-\frac{1}{2} + ib)\left[\left(2b^2 + \frac{1}{2}\right)w + \left(2b^2 - \frac{1}{2} - 2ib\right)\bar{w}\right]} dz - \left(-\frac{1}{2} + i b\right)\omega_2\, .\]
%
\begin{proposition}
There does not exist a compact quotient that supports a homogeneous complex Engel structure when $a = b ^2 \neq 0$.
\end{proposition}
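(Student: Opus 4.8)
The plan is to show that the left-invariant volume form of the underlying Lie group is exact, so that Stokes' theorem rules out any compact quotient, exactly as in the earlier cases $C1$ and $C2$. The essential input is already in hand: from the definitions of $\theta$ and $\theta_1$ in (\ref{eq:20}) we have established $d\theta_1 = 0$ and $d\theta = (-\tfrac12 + ib)\,\theta_1\wedge\theta$, and since $(\theta_1,\theta)$ is a coframing the real $4$-form $V = \theta_1\wedge\bar\theta_1\wedge\theta\wedge\bar\theta$ is a nowhere-vanishing left-invariant volume form.

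First I would form the $3$-form $\Phi = \bar\theta_1\wedge\theta\wedge\bar\theta$ and differentiate it. Using $d\bar\theta_1 = 0$, $d\theta = (-\tfrac12+ib)\theta_1\wedge\theta$ and $d\bar\theta = (-\tfrac12 - ib)\bar\theta_1\wedge\bar\theta$, the term containing $d\bar\theta$ vanishes because it repeats the factor $\bar\theta_1$, and after reordering one finds
\[
d\Phi = \left(-\tfrac12 + ib\right)\theta_1\wedge\bar\theta_1\wedge\theta\wedge\bar\theta = \left(-\tfrac12 + ib\right)V\, .
\]
Since $V$ is real, taking real parts gives $d\bigl(\tfrac12(\Phi + \bar\Phi)\bigr) = -\tfrac12\,V$, so the volume form $V$ is exact with a nonzero constant coefficient.

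Next I would invoke homogeneity: by the Corollary following Theorem \ref{theo:coframing} the symmetry group acts freely, so a compact homogeneous example is a quotient $G/\Gamma$ with $\Gamma$ a co-compact lattice, on which the canonical (left-invariant) coframing, and hence $V$ and $\Phi$, descend to globally defined forms. Integrating over the compact manifold, Stokes' theorem forces $\int_{G/\Gamma} d\bigl(\tfrac12(\Phi+\bar\Phi)\bigr) = 0$, while $\int_{G/\Gamma} V \neq 0$ because $V$ is a volume form; this contradiction shows no such $\Gamma$ exists. Equivalently, the identity $d\Phi = (-\tfrac12+ib)V$ records that $\tr\,\ad \neq 0$, i.e. the Lie algebra is not unimodular, which already precludes a lattice.

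I do not expect a serious obstacle here: the only points needing care are verifying that $\theta_1,\bar\theta_1,\theta,\bar\theta$ are genuinely independent (so that $V\neq 0$ and the wedge really is the volume form up to a nonzero factor) and that the invariant forms descend to the quotient so that Stokes applies — both routine in the homogeneous setting. The substantive content is entirely captured by the single structure-equation computation above.
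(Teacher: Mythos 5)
There is a genuine gap, and it sits exactly at the point you set aside as routine: the four forms $\theta_1,\bar\theta_1,\theta,\bar\theta$ are \emph{not} linearly independent in this case, so your $V$ vanishes identically. Write $p = 2b^2+ib$ (nonzero, since $a=b^2\neq 0$ forces $b\neq 0$). The coefficients in (\ref{eq:20}) satisfy
\begin{equation*}
\frac{(4b^2-1)-4bi}{4b^2+1} \;=\; \frac{\bar p}{p}\, ,\qquad \frac{b\left(2b(4b^2-3)-(12b^2-1)i\right)}{4b^2+1} \;=\; \frac{\bar p^{\,2}}{p}\, ,
\end{equation*}
so that $p\,\theta_1 = \eta + \bar\eta$ with $\eta = p\,\omega_1 + p^2\omega_2$. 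Hence $p\,\theta_1$ is a \emph{real} $1$-form, $\bar\theta_1 = (p/\bar p)\,\theta_1$, and $\theta_1\wedge\bar\theta_1 = 0$. Consequently $V = \theta_1\wedge\bar\theta_1\wedge\theta\wedge\bar\theta \equiv 0$: your identity $d\Phi = (-\tfrac12+ib)V$ is formally correct but reads $0=0$ — it says $\Phi$ is closed — and no exact volume form is produced. (This degeneracy is consistent with the paper's own later normalization $f = (2b^2+\tfrac12)w + (2b^2-\tfrac12-2ib)\bar w$, which satisfies $\bar f = (p/\bar p)f$; the pair $(\theta_1,\theta)$ together with conjugates spans only a three-dimensional space of $1$-forms, so every $4$-form built from them vanishes.)

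Moreover, the failure cannot be repaired by a cleverer choice of $3$-form. Computing directly from the real structure equations (\ref{eq:73}) — or equivalently from (\ref{eq:17}) — one checks that $\tr(\ad X) = 0$ for every $X$, i.e.\ the symmetry algebra of case C3 is unimodular for \emph{all} $(a,b)$; equivalently, each of the invariant $3$-forms $\omega_1\wedge\omega_2\wedge\bar\omega_2$, $\bar\omega_1\wedge\omega_2\wedge\bar\omega_2$, $\omega_1\wedge\bar\omega_1\wedge\omega_2$, $\omega_1\wedge\bar\omega_1\wedge\bar\omega_2$ is closed. So the invariant volume form is never exact in case C3, the Stokes mechanism that the paper does use in cases C1, C2, C4 and C6 is structurally unavailable here, and your fallback assertion that the identity ``records $\tr\ad\neq 0$'' is likewise false. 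This is precisely why the paper's proof takes a different route: it rescales $\theta_1$ to obtain (\ref{eq:109}), reads off a $4$-dimensional solvable Lie algebra from the real and imaginary parts, identifies it in Mubarakzjanov's list, and concludes via the classification of lattices in low-dimensional solvable Lie groups \cite{MR3480018} that the corresponding simply-connected group has no co-compact lattice. A correct proof must engage with that finer, lattice-theoretic obstruction rather than with exactness of a volume form; and in doing so one should note that the proportionality $\bar\theta_1\propto\theta_1$ also makes the real and imaginary parts of $\theta_1$ pointwise proportional, so real/imaginary decompositions based on $(\theta_1,\theta)$ must themselves be handled with care.
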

\begin{proof}
After changing $\theta_1$ to $\frac{1}{\left(-\tfrac{1}{2} + i b\right)}\theta_1$, the structure equation is
\begin{equation}\label{eq:109}
\begin{aligned}
d\theta_1  &=0\, ,\\
d\theta &= \theta_1\wedge\theta\, .
\end{aligned}
\end{equation}
Define $\theta_1 = \alpha + i\beta, \theta = \gamma + i\delta$ as the real and imaginary parts decompositions. Then (\ref{eq:109}) is equivalent to
\begin{equation}
\begin{aligned}
d\alpha &= 0\, ,\\
d\beta &= 0\, ,\\
d\gamma &= \alpha\wedge\gamma - \beta\wedge\delta\, ,\\
d\delta &= \alpha\wedge\delta + \beta\wedge\gamma\, .
\end{aligned}
\end{equation}
Let $-X_3, X_4, X_1, X_2$ be left-invariant vector fields dual to the left-invariant forms $\alpha, \beta, \gamma, \delta$, respectively. Then the nontrivial brackets are
\begin{align*}
[X_1, X_3] &= X_1\, ,\\
[X_2, X_3] &= X_2\, ,\\
[X_1, X_4] &= -X_2\, ,\\
[X_2, X_4] &= X_1\, .
\end{align*}
The Lie algebra is a solvable Lie algebra, denoted by $\mathfrak{g}_{4,10}$ in \cite{MR0153714}. According to the classification results of the existence of co-compact lattices for 4-dimensional solvable Lie groups in \cite{MR3480018}, the connected and simply-connected Lie group corresponding to $\mathfrak{g}_{4,10}$ does not have a co-compact lattice. Therefore, there does not exist a compact quotient that supports a homogeneous complex Engel structure in this case.
\end{proof}

\subsubsection{ $a \neq -\frac{1}{4}\ \ \text{and}\ \ a\neq b^2$ }
Let $\omega_1 = \alpha + i\beta$ and $\omega_2 = \gamma + i \delta$. From the structure equation (\ref{eq:17}), we have
\begin{equation}\label{eq:73}
\begin{aligned}
d\alpha &= -4b\alpha\wedge\beta -2b^2\alpha\wedge\gamma + b\alpha\wedge\delta + (4ab - 2b) \beta\wedge\gamma - (2a + 4b^2) \beta
\wedge\delta\\
& - 2b\left(\frac{1}{4} + b^2\right)\gamma\wedge\delta\, ,\\
d\beta &= -4ab\alpha\wedge\gamma + 2a\alpha\wedge\delta + 2b^2 \beta\wedge\gamma - b\beta\wedge\delta - 4a\left(\frac{1}{4} + b^2\right)\gamma\wedge\delta\, ,\\
d\gamma &= \beta\wedge(\delta - 2b\gamma)\, ,\\
d\delta &= \beta\wedge(2\alpha - \gamma - 2b\delta)\, .
\end{aligned}
\end{equation}

Define $\tilde{\alpha} = -2a \alpha + b\beta -(4a^2 - b^2)\gamma + 4ab\delta$. Since at least one of $a$ or $b$ is not zero, $\tilde{\alpha} \neq 0$ and $d\tilde{\alpha} = 0$.
Let $e_1,\ e_2,\ e_3,\ e_4$ be the dual vector fields of the 1-forms $\alpha,\beta,\gamma,\delta$, respectively. 

\begin{remark}
From Lie theory, there is a split exact sequence \cite{MR1153249}
\[ 0\rightarrow Rad(\mathfrak{g})\rightarrow \mathfrak{g}\rightarrow \mathfrak{g}/Rad(\mathfrak{g})\rightarrow 0\, ,\]
where $Rad(\mathfrak{g})$ is the radical ideal of $\mathfrak{g}$. Thus
\[ \mathfrak{g}\cong Rad(\mathfrak{g})\oplus \mathfrak{g}/Rad(\mathfrak{g})\, .\]
\end{remark}
Denote $\mathfrak{g}_1 = \mathfrak{g}/Rad(\mathfrak{g})$. We will prove the following theorem:

\begin{theorem}
For the Lie algebra $\mathfrak{g}$ corresponding to the structure equation(\ref{eq:73}), $Rad(\mathfrak{g})$ is 1-dimensional and $\mathfrak{g}_1$ is simple 3-dimensional Lie algebra. Specially, 
 \begin{itemize}
 \item $(a<-\frac{1}{4})$ or $(0\leq a < b^2)$ or $(b = 0\ \ \text{and}\ \ a<0)$, the Lie algebra is $\mathbb{R}\times\mathfrak{sl}(2,\mathbb{R})$.
 \item  $(-\frac{1}{4} < a < 0)$ or $(a > b^2)$ or $(b = 0\ \ \text{and}\ \ \ a >0)$, the Lie algebra is $\mathbb{R}\times\mathfrak{su}(2)$.
  \end{itemize}
\end{theorem}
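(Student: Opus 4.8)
The plan is to argue entirely at the level of the Lie algebra $\mathfrak{g}$ determined by the Maurer--Cartan equations (\ref{eq:73}), using the closed $1$-form $\tilde{\alpha} = -2a\alpha + b\beta - (4a^2 - b^2)\gamma + 4ab\delta$ already produced above. First I would read off the structure constants $[e_i,e_j]$ from (\ref{eq:73}) via $d\omega^k(e_i,e_j) = -\omega^k([e_i,e_j])$, where $e_1,e_2,e_3,e_4$ are the frame dual to $\alpha,\beta,\gamma,\delta$. Since $d\tilde{\alpha}=0$, the functional $\tilde{\alpha}$ annihilates every bracket, so $[\mathfrak{g},\mathfrak{g}] \subseteq \mathfrak{h} := \ker\tilde{\alpha}$, a $3$-dimensional subspace (nonzero because $a,b$ are not both $0$). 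As $[\mathfrak{g},\mathfrak{h}] \subseteq [\mathfrak{g},\mathfrak{g}] \subseteq \mathfrak{h}$, the subspace $\mathfrak{h}$ is in fact an ideal, so the whole problem reduces to understanding the induced bracket on the $3$-dimensional ideal $\mathfrak{h}$.

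Next I would compute the induced bracket on $\mathfrak{h}$ in a convenient basis of $\ker\tilde{\alpha}$ and form its Killing form $B_{\mathfrak{h}}(X,Y) = \tr(\ad X\,\ad Y)$. The claim is that $\det B_{\mathfrak{h}}$, viewed as a polynomial in $(a,b)$, vanishes (up to a nonzero factor) exactly on the two loci $a = -\tfrac14$ and $a = b^2$ excluded in this subsection; consequently $B_{\mathfrak{h}}$ is nondegenerate throughout the present regime, so by Cartan's criterion $\mathfrak{h}$ is semisimple, hence a $3$-dimensional simple Lie algebra. Then $\mathfrak{h} = [\mathfrak{h},\mathfrak{h}] \subseteq [\mathfrak{g},\mathfrak{g}] \subseteq \mathfrak{h}$ forces $[\mathfrak{g},\mathfrak{g}] = \mathfrak{h}$. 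Because a simple ideal meets the radical trivially ($Rad(\mathfrak{g}) \cap \mathfrak{h}$ is a solvable ideal of the simple $\mathfrak{h}$, hence $0$), the radical embeds into $\mathfrak{g}/\mathfrak{h} \cong \mathbb{R}$; it is nonzero since $\mathfrak{g}$ is not semisimple, so $\dim Rad(\mathfrak{g}) = 1$.

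To upgrade the Levi splitting to a genuine direct product $\mathbb{R}\times\mathfrak{h}$, I would invoke that a semisimple Lie algebra has no outer derivations and $H^1(\mathfrak{h},\mathbb{R})=0$: any complement generator $v \notin \mathfrak{h}$ acts on $\mathfrak{h}$ by a derivation $\ad v|_{\mathfrak{h}} = \ad_{\mathfrak{h}} w$ for some $w \in \mathfrak{h}$, and replacing $v$ by $v-w$ yields a generator commuting with $\mathfrak{h}$. Thus $\mathfrak{g} \cong \mathbb{R}\times\mathfrak{h}$ with $\mathfrak{h}$ simple, which establishes the first two assertions. Finally, to decide which real form $\mathfrak{h}$ is, I would use that the two $3$-dimensional real simple Lie algebras are separated by the signature of $B_{\mathfrak{h}}$: it is negative definite precisely for the compact form $\mathfrak{su}(2)$ and has signature $(2,1)$ for the split form $\mathfrak{sl}(2,\mathbb{R})$. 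Diagonalizing $B_{\mathfrak{h}}$ (e.g.\ by Sylvester's criterion on its leading principal minors) as a function of $(a,b)$ determines definiteness; since the signature is locally constant off the degeneration loci $\{a=-\tfrac14\}\cup\{a=b^2\}$, it suffices to evaluate the sign on one representative of each connected component of the complement to obtain the stated dichotomy.

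I expect the one genuinely laborious step to be this last signature analysis: producing $\det B_{\mathfrak{h}}$ and its principal minors as explicit polynomials in $(a,b)$, verifying that they degenerate exactly along the two excluded curves (matching the solvable cases $a=-\tfrac14$ and $a=b^2$ treated earlier), and then tracking which component of the $(a,b)$-plane yields which definiteness. This is the computation best delegated to a computer algebra system, and it is also where the precise inequalities delimiting the sub-regions are extracted; everything else is structural Lie theory that follows cleanly once $\mathfrak{h}=\ker\tilde{\alpha}$ has been identified as a simple ideal.
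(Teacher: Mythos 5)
Your structural reduction is sound and genuinely different from the paper's route. The paper identifies $Rad(\mathfrak{g})$ explicitly in three separate cases ($b=0$; $a=0,b\neq0$; the generic one), passes to the quotient, and then exhibits region by region explicit constants $U,V,A,\dots$ assembling canonical bases of $\mathfrak{su}(2)$ or $\mathfrak{sl}(2,\mathbb{R})$. You instead get simplicity of $\mathfrak{h}=\ker\tilde\alpha$ in one stroke from Cartan's criterion ($d\tilde\alpha=0$ does force $[\mathfrak{g},\mathfrak{g}]\subseteq\mathfrak{h}$, so $\mathfrak{h}$ is an ideal), the one-dimensionality of the radical from $Rad(\mathfrak{g})\cap\mathfrak{h}=0$ together with the nonexistence of $4$-dimensional semisimple algebras, and the direct-product splitting from innerness of derivations (in fact the radical here is central --- e.g.\ at $(a,b)=(-\tfrac{1}{10},\tfrac12)$ one checks $[e_1+e_3+e_4,\,e_i]=0$ for all $i$ --- so the $H^1$ step can be bypassed entirely). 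Your degeneracy locus is also correct: the quantity appearing throughout the paper's formulas factors as $-4ab^2+4a^2-b^2+a=(4a+1)(a-b^2)$, vanishing exactly on the two excluded curves.

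The genuine problem is your final step. The complement of $\{a=-\tfrac14\}\cup\{a=b^2\}$ has exactly three connected components: $\{a<-\tfrac14\}$, $\{a>b^2\}$, and $\{-\tfrac14<a<b^2\}$ (the last is connected, since its $b>0$ and $b<0$ parts join through $\{-\tfrac14<a<0,\ b=0\}$). But the theorem's regions are \emph{not} unions of these components: the third component contains both $\{-\tfrac14<a<0,\ b\neq0\}$ (assigned $\mathfrak{su}(2)$) and $\{0\le a<b^2\}\cup\{b=0,\ a<0\}$ (assigned $\mathfrak{sl}(2,\mathbb{R})$); indeed the two bullets already overlap contradictorily at $b=0$, $-\tfrac14<a<0$. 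So ``evaluate one representative per component'' cannot reproduce the stated dichotomy: local constancy of the signature forces a single type on all of $\{-\tfrac14<a<b^2\}$. Carrying out your own computation there settles which: at $(a,b)=(-\tfrac{1}{10},\tfrac12)$ the structure constants of (\ref{eq:73}) give $B(e_2,e_2)=\tr\bigl((\ad e_2)^2\bigr)=\tfrac45>0$, while $e_2$ projects nontrivially modulo the (central) radical, so the Killing form of the Levi factor is not negative definite; since $(4a+1)(a-b^2)=-\tfrac{21}{100}\neq0$ it is nondegenerate, hence of signature $(2,1)$ and the type is $\mathbb{R}\times\mathfrak{sl}(2,\mathbb{R})$, not $\mathfrak{su}(2)$. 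This agrees with the paper's own cases $b=0,a<0$ and $a=0,b\neq0$ but contradicts the bullet $(-\tfrac14<a<0)\Rightarrow\mathfrak{su}(2)$; the discrepancy traces to the paper's sub-case $-\tfrac14<a<0$, where $U=\sqrt{1/((4a+1)(a-b^2))}$ has negative radicand, so the exhibited $(H,X,Y)$ with $Y=\overline{X}$ is complex and spans only the complexification, which cannot distinguish the two real forms. Your method is actually the more reliable one and exposes this; but as written, the plan promises to ``obtain the stated dichotomy,'' which is impossible --- you must either correct the target regions ($\mathfrak{sl}(2,\mathbb{R})$ on $\{a<-\tfrac14\}\cup\{-\tfrac14<a<b^2\}$, $\mathfrak{su}(2)$ on $\{a>b^2\}$) or your proof terminates in a contradiction with the statement.
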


\begin{proof}
We will prove this theorem by analyzing the result for the following 3 cases:
\begin{itemize}
\item 1. $b = 0$

In this case $\tilde{\alpha} = \alpha + 2a\gamma$. Define $\tilde{\beta} = \beta,\ \tilde{\gamma} = -2\alpha + \gamma,\ \tilde{\delta} = \delta$, then
\begin{equation}\label{eq:110}
\begin{aligned}
d\tilde{\beta} &= -a\tilde{\gamma}\wedge\tilde{\delta}\, ,\\
d\tilde{\gamma} &= -(1+ 4a) \tilde{\delta}\wedge\tilde{\beta}\, ,\\
d\tilde{\delta} &= -\tilde{\beta}\wedge \tilde{\gamma}\, .
\end{aligned}
\end{equation}

If $a > 0$, we define $\tilde{\tilde{\beta}} = \tfrac{1}{\sqrt{1 + 4a}}\tilde{\beta}, \tilde{\tilde{\gamma}} = \tfrac{1}{\sqrt{a}}\tilde{\gamma},\tilde{\tilde{\delta}} = \tfrac{1}{\sqrt{a(1 + 4a)}}\tilde{\delta}$. (\ref{eq:110}) is equivalent to
\begin{equation*}
\begin{aligned}
d\tilde{\tilde{\beta}} &= \tilde{\tilde{\gamma}}\wedge\tilde{\tilde{\delta}}\, ,\\
d\tilde{\tilde{\gamma}} &=\tilde{ \tilde{\delta}}\wedge\tilde{\tilde{\beta}}\, ,\\
d\tilde{\tilde{\delta}} &= \tilde{\tilde{\beta}}\wedge \tilde{\tilde{\gamma}}\, .
\end{aligned}
\end{equation*}
Thus $\tilde{\tilde{\beta}},\ \tilde{\tilde{\gamma}},\ \tilde{\tilde{\delta}}$ are left-invariant forms of the Lie group $SU(2)$. So if $a > 0$, the manifold can be taken as $S^1\times SU(2)$.

If $a < 0$, $Rad(\mathfrak{g}) = \{e_1 + 2e_3\}$ and $\mathfrak{g}/Rad(\mathfrak{g}) = \{u =e_1 - \frac{1}{2a}e_3, v=\ e_2, w =\ e_4\}$. Define
\begin{equation*}
\begin{aligned}
H&= \frac{-4\sqrt{-a}}{1 + 4a}u\, ,\\
X&=\sqrt{-a}v + w\, ,\\
Y&= \frac{\sqrt{-a}}{a(1 + 4a)}v -\frac{1}{a(1 + 4a)}w\, .
\end{aligned}
\end{equation*}
The nontrivial brackets are
\begin{align*}
[H, X]&= 2X\, ,\\
[H,Y]&=-2Y\, ,\\
[X,Y]&=H\, .
\end{align*}
$(H, X, Y)$ forms a canonical basis for the Lie algebra $\mathfrak{sl}(2, \mathbb{R})$. Since  $SL_2(\mathbb{R})$ has  co-compact lattices \cite{MR2655311},  in this case, there exists a compact quotient that supports homogeneous complex Engel structure.

\item 2. $a =0$

By our assumption, $b\neq 0$. The radical ideal is $Rad(\mathfrak{g}) = \left\{e_1 + \frac{2e_3 + 4be_4}{1 + 4b^2}\right\}$ and $\mathfrak{g}/Rad(\mathfrak{g})= \{u =e_1, v=\ e_2 - \frac{e_3}{b}, w =\ e_4\}$. The nontrivial brackets are
\begin{align*}
[u,v]&= 2bu + 2w\\
[u, w] &=-bu\\
[v, w] &= \left(2b^2 - \frac{1}{2}\right)u + bv + 2bw
\end{align*}
Define
\begin{equation*}
\begin{cases}
A= \frac{2b-1}{4b^2},\ B = \frac{1}{2b},\ C = \frac{1}{2b^2} , D= 2b+1,\ E=-2b, F=2, s = 2, \ \ \text{ if } b \neq -\frac{1}{2}\, ,\\
A= \frac{2b+1}{4b^2},\ B = -\frac{1}{2b},\ C = \frac{1}{2b^2}, D= 2b-1,\ E=2b, F=2, s = -2,\ \ \text{ if } b \neq \frac{1}{2}\, .
\end{cases}
\end{equation*}
and
\begin{align*}
H&=sv\, ,\\
X&=Au + Bv + Cw\, ,\\
Y&= Du + Ev + Fw\, .
\end{align*}
Then $(H, X, Y)$ forms a canonical basis for the Lie algebra $\mathfrak{sl}(2, \mathbb{R})$ such that
\begin{align*}
[H, X]&= 2X\, ,\\
[H,Y]&=-2Y\, ,\\
[X,Y]&=H\ .
\end{align*}
Since  $SL_2(\mathbb{R})$ has co-compact lattices \cite{MR2655311}, in this case there exists a compact quotient that supports a homogeneous complex Engel structure.

\item 3. $a \neq 0, a \neq -\tfrac{1}{4} \ \ \text{and}\ \ b\neq 0$

The radical ideal is $Rad(\mathfrak{g}) = \left\{e_1 + \frac{2e_3 + 4be_4}{1 + 4b^2}\right\}$ and 
\[\mathfrak{g}/Rad(\mathfrak{g}) = \left\{u =e_1 +\frac{e_4}{2b}, v=\ e_2 - \frac{e_4}{4a}, w =\ e_3+ \frac{4a^2 - b^2}{4ab}e_4\right\}.\]
The nontrivial brackets are
\begin{align*}
[u,v]&= -\frac{-8ab^2 + 4a^2 - b^2}{4ab}u + \frac{1}{2b}w\, ,\\
[u, w] &=-(-4ab^2 +4a^2 - b^2 +a)\left(\frac{1}{4a}u + \frac{1}{2b}v\right)\, ,\\
[v, w] &= \frac{-4b^4 + 16a^3 +12ab^2 + b^2}{8ab}u + \frac{-4ab^2 +4a^2 - b^2 +a}{4a}v\\ 
&\quad -\frac{-8ab^2 + 4a^2 -b^2}{4ab}w\, .
\end{align*}
After the proof of the following proposition, we will finish the proof of the theorem. 
\begin{proposition}
If $a \neq 0, a \neq -\tfrac{1}{4} \ \text{and}\ \ b\neq 0$, there exists a compact quotient that supports a homogeneous complex Engel structure.
\end{proposition}

\begin{proof}
\textbf{1. $a>b^2$}

Define
\begin{align*}
U&=\sqrt{\frac{1}{-4ab^2 + 4a^2-b^2+a}}\, ,\\
V&= \sqrt{\frac{a}{2(-4ab^2 + 4a^2-b^2+a)}}\\
\end{align*}
and 
\begin{align*}
A&=\frac{V}{2a}(-8ab^2U + 4a^2U -b^2U+b),\ B = V,\ C = -UV,\\
D&= -\frac{V}{2a}(-8ab^2U + 4a^2U -b^2U-b),\ E=V, F=UV,\\
 s&=2bU\, .
\end{align*}
\noindent Then define
\begin{align}
H&=sv\, ,\nonumber\\
X&=Au + Bv + Cw\, ,\\
Y&= Du + Ev + Fw\, .\nonumber
\end{align}
Then $(H, X, Y)$ forms a canonical basis for the Lie algebra $\mathfrak{su}(2)$ with the following nontrivial brackets
\begin{align*}
[H, X]&= Y\, ,\\
[H,Y]&=-X\, ,\\
[X,Y]&=H\, .
\end{align*}
Since $SU(2)$ has co-compact lattices, there exists a compact quotient that supports a homogeneous complex Engel structure.

\textbf{2. $0<a<b^2$}

Define
\begin{align*}
U&=\sqrt{\frac{-1}{-4ab^2 + 4a^2-b^2+a}}\, ,\\
V&= \sqrt{\frac{-a}{-4ab^2 + 4a^2-b^2+a}}\\
\end{align*}
and
\begin{align*}
A&=-\frac{V}{2a}(-8ab^2U + 4a^2U -b^2U-b),\ B = V,\ C =UV,\\
D&= \frac{V}{2a}(-8ab^2U + 4a^2U -b^2U+b),\ E=V, F=-UV,\\
 s&=4bU.
\end{align*}

\noindent Then define
\begin{align*}
H&=sv\, ,\\
X&=Au + Bv + Cw\, ,\\
Y&= Du + Ev + Fw\, .
\end{align*}
Then $(H, X, Y)$ forms a canonical basis for the Lie algebra $\mathfrak{sl}(2,\mathbb{R})$ with nontrivial brackets
\begin{align*}
[H, X]&= 2X\, ,\\
[H,Y]&=-2Y\, ,\\
[X,Y]&=H\, .
\end{align*}
Since $SL_2(\mathbb{R})$ has co-compact lattices \cite{MR2655311}, there exists a compact quotient that supports a homogeneous complex Engel structure.

\textbf{3. $-\frac{1}{4}<a<0$}

Define
\begin{align*}
U&=\sqrt{\frac{1}{-4ab^2 + 4a^2-b^2+a}}\, ,\\
V&= \sqrt{\frac{a}{2(-4ab^2 + 4a^2-b^2+a)}}\\
\end{align*}
and
\begin{align*}
A&=\frac{V}{2a}(-8ab^2U + 4a^2U -b^2U+b),\ B = V,\ C = -UV,\\
D&= -\frac{V}{2a}(-8ab^2U + 4a^2U -b^2U-b),\ E=V, F=UV,\\
 s&=2bU\, .
\end{align*}
\noindent Then define
\begin{align*}
H&=sv\, ,\\
X&=Au + Bv + Cw\, ,\\
Y&= Du + Ev + Fw\, .
\end{align*}
Note $Y = \overline{X}$. $(H, X, Y)$ forms a canonical basis of the Lie algebra $\mathfrak{su}(2)$ with nontrivial brackets
\begin{align*}
[H, X]&=  \overline{X}\, ,\\
[H, \overline{X}]&=-X\, ,\\
[X, \overline{X}]&=H\, .
\end{align*}
Since  $SU(2)$ has co-compact lattices, there exists a compact quotient that supports a homogeneous complex Engel structure.

\textbf{4. $a<-\frac{1}{4}$}

Define
\begin{align*}
U&=\sqrt{\frac{1}{-4ab^2 + 4a^2-b^2+a}}\, ,\\
V&= \sqrt{\frac{-a}{2(-4ab^2 + 4a^2-b^2+a)}}\\
\end{align*}
and
\begin{align*}
A&=\frac{V}{2a}(-8ab^2U + 4a^2U -b^2U+b),\ B = V,\ C =-UV,\\
D&= -\frac{V}{2a}(-8ab^2U + 4a^2U -b^2U-b),\ E=V, F=UV, \\
 s&=2bU.
\end{align*}
\noindent Then define
\begin{align*}
H&=sv\, ,\\
X&=Au + Bv + Cw\, ,\\
Y&= Du + Ev + Fw\, .
\end{align*}
$(H, X, Y)$ forms a canonical basis of the Lie algebra $\mathfrak{sl}(2,\mathbb{R})$ with nontrivial brackets
\begin{align*}
[H, X]&= Y\, ,\\
[H,Y]&=-X\, ,\\
[X,Y]&=-H\, .
\end{align*}
Since $SL_2(\mathbb{R})$ has co-compact lattices \cite{MR2655311}), there exists a compact quotient that supports a homogeneous complex Engel structure.
\end{proof}
\end{itemize}

Since there exists a co-compact lattice for each case, we have proved the theorem.
\end{proof}

\subsection{Homogeneous Case C4}
Now $(p_1, p_2, q_1, q_2, r_1, r_2) = (0,\ a-ib,\ 2ib,\ a-ib,\ -a-ib,\ 0)$. The structure equation is
\begin{align}\label{eq:21}
d\omega_1& = 2ib\bar{\omega}_1\wedge \omega_1 -(a+ib)\bar{\omega}_2\wedge\omega_1 +(a-ib)\bar{\omega}_1\wedge\omega_2,\nonumber\\
d\omega_2& = (\omega_1 - \omega_2)\wedge\bar{\omega}_1  - (a + ib)\bar{\omega}_2\wedge\omega_2.
\end{align}

\noindent It is easy to verify that
\[d(\omega_1\wedge\bar{\omega}_2\wedge\omega_2) = (1 + 2bi)\omega_1\wedge\bar{\omega}_1\wedge\omega_2\wedge\bar{\omega}_2\neq 0.\]
By Stokes' Theorem, there is no compact quotient of type $C4$ that supports a homogeneous complex Engel structure.

We will find local coordinate representations of $\omega_1$ and $\omega_2$ under different conditions for $a$ and $b$. The symmetry groups of the coframing are different for different $a$ and $b$. We define $A = 1$ and
\begin{equation}
B = 
\begin{cases}
-\frac{1}{2} \pm c +ib, \text{ where } c = \sqrt{\frac{1}{4}- (b^2 + a)}, \text{ if }b^2 + a \le \frac{1}{4}\\
-\frac{1}{2} \pm ic +ib, \text{ where } c = \sqrt{ (b^2 + a) - \frac{1}{4}},  \text{ if }b^2 + a > \frac{1}{4}
\end{cases}
\end{equation}
By (\ref{eq:21}),
\[d(A\omega_1 + B\omega_2)\wedge(A\omega_1 + B\omega_2) = 0\,.\]

\subsubsection{$a = b = 0$}
The structure equation (\ref{eq:21}) reduces to 
\begin{align*}
d\omega_1& = 0\, ,\\
d\omega_2& = (\omega_1 - \omega_2)\wedge\bar{\omega}_1\, .
\end{align*}
Since $d\omega_1 = 0$, by the complex Poincar\'e Lemma, there exists a holomorphic function $z$ such that $\omega_1 = dz$. So $d\omega_2 = (dz - \omega_2)\wedge d\bar{z}$, that is equivalent to $d(dz - \omega_2) = -(dz - \omega_2)\wedge d\bar{z}$. By the complex Frobenius Theorem, there exists a function $f$ and a holomorphic function $w$ such that $dz - \omega_2 = f\ dw$ and $df\wedge dw = fd\bar{z}\wedge dw$. By modifying $w$, we can write $dz - \omega_2 = e^{\bar{z}}\ dw$. So in the local coordinate system $(z,\ w)$, the coframing is
\begin{align*}
\omega_1& = dz\, ,\\
\omega_2& = - e^{\bar{z}}\ dw + dz\, .
\end{align*}
Take a discrete symmetry group of the coframing
\[\Gamma = \{ (2\alpha_0\pi i, \beta_0 + i\beta_1) | \alpha_0,\ \beta_0,\ \beta_1 \in \mathbb{Z}\} \cong \mathbb{Z}^3. \]

\noindent $\Gamma$ acts on the local coordinate as $(z,\ w)\mapsto (z + 2\alpha_0\pi i, \ w + \beta_0 + i\beta_1)$. This action keeps the coframing invariant. The quotient can be taken as
\[\mathbb{C}^2/\Gamma \cong \mathbb{R}^1\times S^1\times T^2\, .\]


%
%
\subsubsection{  $b^2 + a < \frac{1}{4}$ or $b^2 + a > \frac{1}{4}$}
Since the calculations are similar for these two cases, without loss of generality, assume $b^2 + a < \tfrac{1}{4}$. Define $U = -\frac{1}{2} + c +ib , V = -\frac{1}{2} - c +ib$ and $\theta_1 = \omega_1 + U\omega_2$, $\theta_2 = \omega_1 + V\omega_2$.

%
Then the structure equation is
\begin{align*}
d\theta_1 &= (\frac{1}{2} - c + ib)\bar{\theta}_2\wedge\theta_1\, ,\\
d\theta_2 &= (\frac{1}{2} + c + ib)\bar{\theta}_1\wedge\theta_2\, .
\end{align*}

By the complex Frobenius Theorem, there exist coordinates $z$ and $w$ and functions $f$ and $g$, such that $ \theta_1 = fdz,\ \theta_2 = \bar{g}d\bar{w}$.
Since $d(\theta_1\wedge\bar{\theta}_2) = d(fgdz\wedge dw)= 0$, so there exists a function $F(z,w)$ such that $fg = F(z,w)$. Since
\begin{align*}
d\theta_1 = df\wedge dz &= \Big(\frac{1}{2} - c + ib\Big)F(z,w) dw\wedge dz\, ,\\
d\bar{\theta}_2 = dg\wedge dw  &= \Big(\frac{1}{2} + c + ib\Big)F(z,w) dz\wedge dw\, ,
\end{align*}
$f$ and $g$ are both functions of $z, w$ and 
\begin{align*}
f_w &= \Big(\frac{1}{2} - c + ib\Big) fg\, ,\\
g_z &= \Big(\frac{1}{2} + c + ib\Big) fg\, .
\end{align*}
Thus
\begin{align*}
(\log fg)_{wz} &= \Big(\frac{f_w}{f}\Big)_z +\Big(\frac{g_w}{g}\Big)_z\\
&= \Big(\frac{1}{2} - c + ib\Big)g_z + \Big(\frac{g_w}{g}\Big)_z\\
&=\Big((\frac{1}{2} + ib)^2 - c^2 \Big)fg + \Big(\frac{g_w}{g}\Big)_z\, ,
\end{align*}
while
\begin{align*}
(\log fg)_{zw} &= \Big(\frac{f_z}{f}\Big)_w +\Big(\frac{g_z}{g}\Big)_w\\
&=  \Big(\frac{f_z}{f}\Big)_w + \Big(\frac{1}{2} + c + ib\Big) f_w\\
&=\Big(\frac{f_z}{f}\Big)_w + \Big((\frac{1}{2} + ib)^2 - c^2 \Big)fg.
\end{align*}
Since $(\log fg)_{wz} = (\log fg)_{zw}$, 
\[\Big(\frac{f_z}{f}\Big)_w =  \Big(\frac{g_w}{g}\Big)_z.\]
Since $(\log f)_{zw} = \Big(\frac{f_z}{f}\Big)_w $ and $(\log g)_{zw} = \Big(\frac{g_w}{g}\Big)_z$, 
\[(\log f)_{zw} = (\log g)_{zw}.\]
So there exist functions $A(z)$ and $B(w)$ such that
\[\frac{f(z,w)}{g(z,w)} = \frac{A(z)}{B(w)}.\]
So there exists a function $G(z, w)$ such that
\[ \theta_1 = A(z)G(z, w)dz,\ \ \ \theta_2 = \overline{B(w) G(z, w)}d\bar{w}.\]
After redefining $z$ and $w$ and the function $f(z,w)$, we can write
\[ \theta_1 = f(z, w)dz,\ \ \ \theta_2 = \overline{f(z, w)}d\bar{w}\]
\begin{equation*}
\begin{cases}
f_w &= \Big(\frac{1}{2} - c + ib\Big) f^2\\
f_z &= \Big(\frac{1}{2} + c + ib\Big) f^2
\end{cases}
\end{equation*}
Thus there exists a constant $C$ such that
\[f(z,w) = \frac{1}{-\Big(\frac{1}{2} + c + ib\Big) z + \Big(\frac{1}{2} - c + ib \Big)w + C}.\]
After translating $z$ or $ w$, 
\begin{equation*}
\begin{aligned}
\theta_1 &= \frac{dz}{-\Big(\frac{1}{2} + c + ib\Big) z + \Big(\frac{1}{2} - c + ib\Big)w},\\
\theta_2  &=  \frac{d\bar{w}}{-\Big(\frac{1}{2} + c + ib\Big) \bar{z} + \Big(\frac{1}{2} - c + ib\Big)\bar{w}}.
\end{aligned}
\end{equation*}
Now change the notation from $w$ to $\bar{w}$, then
\begin{equation*}
\begin{aligned}
\theta_1 &= \frac{dz}{-\Big(\frac{1}{2} + c + ib\Big) z + \Big(\frac{1}{2} - c + ib\Big)\bar{w}},\\
\theta_2  &=  \frac{dw}{-\Big(\frac{1}{2} + c + ib\Big) \bar{z} + \Big(\frac{1}{2} - c + ib\Big)w}.
\end{aligned}
\end{equation*}
So the Engel structure can be defined on the complex 2-plane except two lines
\begin{equation}
\begin{aligned}
-\Big(\frac{1}{2} + c + ib\Big) z + \Big(\frac{1}{2} - c + ib\Big)\bar{w} = 0,\\
-\Big(\frac{1}{2} + c + ib\Big) \bar{z} + \Big(\frac{1}{2} - c + ib\Big)w = 0.
\end{aligned}
\end{equation}

Now we can get the local coordinate representation of $\omega_1$ and $\omega_2$
\begin{equation*}
\begin{aligned}
\omega_1 &= \frac{1}{2c}\Big[(\frac{1}{2} + c + ib)\theta_1 + (-\frac{1}{2} + c + ib)\theta_2 \Big],\\
\omega_2  &=  \frac{1}{2c}(\theta_1 - \theta_2).
\end{aligned}
\end{equation*}
Let $\lambda$ and $\mu$ be two complex constants. The symmetry group of the coframing is
\[(z, w)\rightarrow (\lambda z,\  \bar{\lambda} w)\]
and 
\[(z, w)\rightarrow \left(z + \left(\frac{1}{2} - c + ib\right)\mu,\  w + \left(\frac{1}{2} + c + ib\right)\bar{\mu}\right).\]

\subsubsection{  $b^2 + a = \frac{1}{4}$}
The structure equation is
\begin{equation*}
\begin{aligned}
d\omega_1& = 2ib\bar{\omega}_1\wedge \omega_1 - \left(\frac{1}{4} - b^2 +ib\right)\bar{\omega}_2\wedge\omega_1 + \left(\frac{1}{4} - b^2-ib\right)\bar{\omega}_1\wedge\omega_2 \, ,\\
d\omega_2& = (\omega_1 - \omega_2)\wedge\bar{\omega}_1  - \left(\frac{1}{4} - b^2 + ib\right)\bar{\omega}_2\wedge\omega_2\, .
\end{aligned}
\end{equation*}
Define $\theta = \omega_1 + \left(-\tfrac{1}{2} +ib\right)\omega_2$. Then
\[d\theta = \Big(\frac{1}{2} +ib\Big)\bar{\theta}\wedge\theta.\]
Let $\theta = \alpha + i\beta$ be the real part and imaginary part decomposition. Then
\begin{align*}
d\alpha &= -2b\alpha\wedge\beta\, ,\\
d\beta &= \alpha\wedge\beta, 
\end{align*}
so $d(\alpha + 2b\beta) = 0$. Thus there exists a real function $x$ such that $\alpha + 2b\beta = dx$. Thus, $d\beta = dx\wedge \beta$. This is equivalent to $d(e^{-x}\beta) = 0$, that implies the existence of a real function $y$ such that $\beta = e^x dy$. So
\[\theta = dx - 2b e^x dy + i e^x dy.\]
Let $z = -e^{-x} - 2b y + iy$. Then
\[\theta = \frac{dz}{(-\frac{1}{2} +ib) z + (-\frac{1}{2} -ib)\bar{z}}.\]
Recall that the structure equation is 
\begin{equation*}
\begin{aligned}
d\omega_1 &= \left(\frac{1}{2} +ib\right)\bar{\theta}\wedge\omega_1 + \left(-\frac{1}{2} +ib\right)\bar{\omega}_1\wedge\theta,\\
d\omega_2 &= \theta\wedge\bar{\theta} + \left(\frac{1}{2} +ib\right)\left(\theta\wedge\bar{\omega}_2 + \bar{\theta}\wedge\omega_2\right).
\end{aligned}
\end{equation*}
Let $\omega_1 = \gamma + i\delta$ be the real part and imaginary part decomposition. Since $\theta = \alpha + i\beta$, the exterior derivative of $\omega_1$ can be written as

\begin{equation*}
\begin{aligned}
d\gamma &= \alpha\wedge\gamma + \beta\wedge\delta -2b\alpha\wedge\delta + 2b\beta\wedge\gamma,\\
d\delta &= 0.
\end{aligned}
\end{equation*}
So there exists a real function $u$ such that $\delta = du$. Since $\langle\gamma, du\rangle$ forms a Frobenius system, there exist real functions $p,\ q,\ v$ such that $\gamma = pdv + q du$. Write
\begin{equation*}
\begin{aligned}
dp &= p_x dx + p_y dy + p_u du + p_v dv\, ,\\
dq &= q_x dx + q_y dy + q_u du + q_v dv\, .
\end{aligned}
\end{equation*}
Then from the structure equation, the functions $p$ and $q$ satisfy
\begin{equation*}
\begin{aligned}
p_x &= \frac{p}{-x - 2by},\\
p_y &= \frac{2bp}{-x - 2by},\\
p_u &= q_v,\\
q_x &= \frac{q - 2b}{-x - 2by},\\
q_y &= \frac{1 + 2b q}{-x - 2by}.
\end{aligned}
\end{equation*}
From the first two equations, we know that there exists a function $C_1(u,v)$ such that $p = \frac{C_1(u, v)}{x + 2by}$. From the last two equations, we know that there exists a function $C_2(u,v)$ such that $q = \frac{(2bx - y) + C_2(u, v)}{x + 2by}$. From the third equation we know that $\frac{\partial C_1}{\partial u} = \frac{\partial C_2}{\partial v}$. Thus $d\Big(\int C_1(u, v)dv\Big) = C_1(u, v)dv + C_2(u, v) du$. Thus
\begin{align*}
\gamma &= \frac{C_1(u, v)}{x + 2by} dv +\frac{(2bx - y) + C_2(u, v)}{x + 2by} du\\
       &= \frac{C_1(u, v)dv + C_2(u, v) du}{x + 2by}  + \frac{2bx - y}{x + 2by} du\\
       &= \frac{d(\int C_1(u, v)dv)}{x + 2by}  + \frac{2bx - y}{x + 2by} du.
\end{align*}
Now define $(\int C_1(u, v)dv)$ as new $v$, then
\[\gamma = \frac{dv}{x + 2by}  + \frac{2bx - y}{x + 2by} du.\]
So
\begin{align*}
\omega_1 &= \gamma + i\delta\\
         &= \frac{dv}{x + 2by}  + \frac{2bx - y}{x + 2by} du + idu\\
         &= \frac{1}{x + 2by}\left[dv + (2b + i)(x + iy) du\right]\\
         &= \frac{1}{x + 2by}\left[d(v + (2b + i)(x + iy)u) -(2b + i)u\ d(x + iy)\right].
\end{align*}
Define $w = v + (2b + i)(x + iy)u$ and redefine $z = x + iy$. The coframing can be written as
\begin{align*}
\omega_1 &= \frac{1}{x + 2by}[dw - (2b + i)u\ dz]\\
         &= -\frac{1}{(-\frac{1}{2} +ib) z + (-\frac{1}{2} -ib)\bar{z}}\\
         &\quad \times\left[dw + \left(\frac{1}{2} - ib\right) \frac{w - \bar{w}}{(-\frac{1}{2} +ib) z + (-\frac{1}{2} -ib)\bar{z}} dz\right],\\
\omega_2 &= \frac{1}{-\frac{1}{2} + ib}(\theta - \omega_1)\\
        &= \frac{1}{(-\frac{1}{2} + ib) (-\frac{1}{2} +ib) z + (-\frac{1}{2} -ib)\bar{z}}\\
        &\quad \times\left[dz + dw + \left(\frac{1}{2} - ib\right) \frac{w - \bar{w}}{(-\frac{1}{2} +ib) z + (-\frac{1}{2} -ib)\bar{z}} dz\right],
\end{align*}
where $(z, w)$ is a local holomorphic coordinate system on the manifold. And $\omega_2 = 0$ defines the complex Engel structure.
Let $\lambda,\ s,\ t$ be any real constants. The coframing is invariant under the following local transformation
\[(z,\ w)\rightarrow\left(\lambda z + \left(b - \frac{1}{2}i\right)t,\ \lambda w + s \right).\]
We can take a discrete subgroup $\Gamma$ such that $M$ is locally biholomorphic to $\mathbb{C}^2/\Gamma \cong \mathbb{R}^2\times T^2$.
\subsection{Homogeneous Case C5}

Assume $a\neq 0$. Otherwise, this is a special case of homogeneous case C4, with $a + b^2 = \frac{1}{4}$. The structure equation is
\begin{equation*}
\begin{aligned}
d\omega_1 &= \left(b^2 - \frac{1}{4} -ib\right)\bar{\omega}_2\wedge\omega_1 +\left(\frac{1}{4}a + ab^2 +i\left(\frac{1}{2}ab + 2ab^3\right)\right)\bar{\omega}_2\wedge\omega_2 + 2ib\bar{\omega}_1\wedge\omega_1\\
 &\ \ \  + \left(\frac{1}{4} -\frac{1}{2}a - 2ab^2 - b^2 -ib\right)\bar{\omega}_1\wedge\omega_2 + \left(\frac{1}{2}a -2ab^2 -2iab\right)\omega_2\wedge\omega_1,\\
d\omega_2 &= \left(-\frac{1}{4} + \frac{1}{2}a -2ab^2 + b^2 +i \left(2ab - b\right)\right)\bar{\omega}_2\wedge\omega_2 + \omega_1\wedge\bar{\omega}_1 \\
&\ \ \ \ + \left(-a + 2iab\right)\omega_1\wedge\omega_2 + \left(-1 + a + 2iab\right)\omega_2\wedge\bar{\omega}_1.
\end{aligned}
\end{equation*}
Define $\theta = \omega_1 + \left(-\frac{1}{2} +ib\right)\omega_2$, that satisfies
\begin{equation}\label{eq:88}
d\theta = \left(\frac{1}{2} +ib\right)\bar{\theta}\wedge\theta.
\end{equation}
This structure equation is same as that of the case C4, with $b^2 + a = \frac{1}{4}$. But in case C5, $a$ and $b$ do not have to satisfy this relation.

Let $\theta = \alpha + i\beta$ be the real and imaginary part decomposition. By (\ref{eq:88}),
\begin{equation*}
\begin{aligned}
d\alpha &= -2b\alpha\wedge\beta,\\
d\beta &= \alpha\wedge\beta.
\end{aligned}
\end{equation*}
Since $d(\alpha + 2b\beta) = 0$, there exists a real function $x$ such that $\alpha + 2b\beta = dx$. Thus $d\beta = dx\wedge \beta$. So there exists a real function $y$ such that $\beta = e^x dy$, that yields
\[\theta = dx - 2b e^x dy + i e^x dy\, .\]
Let $z = -e^{-x} - 2b y + iy$. Then
\[\theta = \frac{dz}{(-\frac{1}{2} +ib) z + (-\frac{1}{2} -ib)\bar{z}} \, .\]
The symmetry groups of the coframing are different for different parameters $a$ and $b$. In the following sections, we will consider the following cases:
\begin{enumerate}
\item $a \neq \pm\frac{1}{2}$
\item $a = \frac{1}{2}, b = 0$
\item $a = \frac{1}{2}, b \neq 0$
\item $a = -\frac{1}{2}$
\end{enumerate}

\subsubsection{ $a \neq \pm\frac{1}{2}$}
Define $A = r + i s$, where
\begin{equation*}
\begin{aligned}
r &= \frac{1}{\Big(\frac{1}{2} + a\Big) (1 + 4b^2)}\\
s &= \frac{2b}{\Big(\frac{1}{2} - a\Big) (1 + 4b^2)}.\\
\end{aligned}
\end{equation*}
\noindent After defining $\theta_2 = \omega_2 + A\theta$, 
the structure equation reduces to
\[d\theta_2 = 2a\Big(-\frac{1}{2} + ib\Big)\theta\wedge\theta_2 + (2a - 1)\Big(\frac{1}{2} + ib\Big)\theta_2\wedge\bar{\theta} + \Big(\frac{1}{2} + ib\Big)\theta\wedge\bar{\theta}_2.\]
Let $\theta_2 = \gamma + i\delta$ and
$\theta = \frac{dx + i dy}{A(z)}$, where 
$A(z) = (-\frac{1}{2} +ib) z + (-\frac{1}{2} -ib)\bar{z} = -x -2by$ is a real function. Then
\begin{equation}\label{eq:89}
\begin{aligned}
d\gamma &= \Big(-\frac{2a}{A(z)} + \frac{1}{A(z)}\Big)dx\wedge\gamma + \frac{1}{A(z)} dy\wedge\delta -\frac{4ab}{A(z)}dy\wedge\gamma\\
d\delta &= \frac{2b}{A(z)}dx\wedge\gamma + \Big(-\frac{4ab}{A(z)} + \frac{2b}{A(z)}\Big)dy\wedge\delta -\frac{2a}{A(z)}dx\wedge\delta.
\end{aligned}
\end{equation}
By (\ref{eq:89}), $\langle\gamma, dy\rangle$ and $\langle\delta, dx\rangle$ are two Frobenius systems, that implies the existence of functions $p, \ q, \ r,\ s$ and $u,\ v$ such that
\begin{equation*}
\begin{aligned}
\gamma &= p du + q dy,\\
\delta &= r dv + sdx.\\
\end{aligned}
\end{equation*}
Write
\begin{equation*}
\begin{aligned}
dp &= p_x dx + p_y dy + p_u du + p_v dv,\\
dq &= q_x dx + q_y dy + q_u du + q_v dv,\\
dr &= r_x dx + r_y dy + r_u du + r_v dv,\\
ds &= s_x dx + s_y dy + s_u du + s_v dv.
\end{aligned}
\end{equation*}
By (\ref{eq:89}),
\begin{align*}
d\gamma &= dp\wedge du + dq\wedge dy\\
        &= p_x dx\wedge du + p_y dy\wedge du + p_v dv\wedge du + q_x dx\wedge dy + q_u du\wedge dy + q_v dv\wedge dy\\
        &= \frac{1}{-x -2by}[p(1 - 2a)dx\wedge du + (q(1 - 2a) - s)dx\wedge dy\\
        &\ \ \  + r dy\wedge dv - 4ab p dy\wedge du].
\end{align*}
Then the functions $p$ and $q$ must satisfy
\begin{equation*}
\begin{aligned}
p_y -q_u &= \frac{-4ab p}{-x -2by},\\
p_x &= \frac{1 - 2a}{-x -2by} p,\\
p_v &= 0,\\
q_v &= -\frac{r}{-x -2by},\\
q_x &= \frac{q(1 - 2a) - s}{-x -2by}.
\end{aligned}
\end{equation*}
Thus there exists a function $C_1(u,y)$ such that $p = (x + 2by)^{2a - 1}C_1(u, y)$. After redefining $u$ and $q$, we can assume $C_1(u,y) = 1$. Then
\[p = (x + 2by)^{2a - 1}.\]
By (\ref{eq:89}),
\begin{align*}
d\delta &= dr\wedge dv + ds\wedge dx\\
        &=  r_x dx\wedge dv + r_y dy\wedge dv + r_u du\wedge dv + s_y dy\wedge dx + s_u du\wedge dx + s_v dv\wedge dx\\
        &= \frac{1}{-x -2by}\Big[2bp dx\wedge du + (2bq + (4ab - 2b)s)dx\wedge dy\\
        &\ \ \  + (-4ab + 2b)rdy\wedge dv - 2ar dx\wedge dv \Big].
\end{align*}
Then the functions $r$ and $s$ must satisfy
\begin{equation}\label{eq:90}
\begin{aligned}
r_x - s_v &= -\frac{2ar}{-x -2by},\\
r_y &= \frac{-4ab + 2b}{-x -2by} r,\\
r_u &= 0,\\
s_y &= \frac{-2bq + (-4ab + 2b)s}{-x -2by},\\
s_u &= -\frac{2bp}{-x -2by}.
\end{aligned}
\end{equation}
So there exists a function $C_2(x,v)$ such that $r = (x + 2by)^{2a - 1}C_2(x,v)$. After redefining $v$ and $s$, we can assume $C_2(x,v) = 1$. Then
\[r = (x + 2by)^{2a - 1}.\]
Substituting this equation into (\ref{eq:90}), the equations are as follows:
\begin{equation*}
\begin{aligned}
q_u &= -2b(x + 2by)^{2a - 2},\\
q_v &= (x + 2by)^{2a - 2},\\
q_x &= \frac{(2a - 1) q + s}{x + 2by},\\
s_v &= -(x + 2by)^{2a - 2},\\
s_u &= 2b (x + 2by)^{2a - 2},\\
s_y &= \frac{2b q + (4ab - 2b)s}{x + 2by}.
\end{aligned}
\end{equation*}
This yields
\begin{equation}\label{eq:91}
\begin{aligned}
\theta_2 &= \gamma + i\delta\\
         &= (x + 2by)^{2a - 1}(du + i dv) + i s\left(dx -i\frac{q}{s}dy\right).\\
\end{aligned}
\end{equation}
To get a local holomorphic coordinate system, let $q = - s$. Then
\begin{equation*}
\begin{aligned}
q_u &= -2b(x + 2by)^{2a - 2},\\
q_v &= (x + 2by)^{2a - 2},\\
q_x &= \frac{(2a - 2) q}{x + 2by},\\
q_y &= -\frac{(-4ab + 4b)q}{x + 2by}.
\end{aligned}
\end{equation*}
From these equations, we get
\[q = (1 - 2bu + v)(x + 2by)^{2a - 2}\]
Substituting this equation into (\ref{eq:91}) yields
\begin{align*}
\theta_2 &= \gamma + i\delta\\
         &= (p du + q dy) + i(r dv + sdx)\\
         &= ((x + 2by)^{2a - 1} du + (1 - 2bu + v)(x + 2by)^{2a - 2} dy) \\
         &\ \ \ \ + i((x + 2by)^{2a - 1} dv - (1 - 2bu + v)(x + 2by)^{2a - 2} dx)\\
         &= (x + 2by)^{2a - 1} d(u + iv) -i(1 - 2bu + v)(x + 2by)^{2a - 2} d(x + iy).
\end{align*}
Define $w = u + iv$. We get the local coordinate representation of $\theta_2$:
\[\theta_2 =(-A(z))^{2a - 1}\left[dw  +\left[i +\left(\frac{1}{2} -bi\right)w -\left(\frac{1}{2} + bi\right)\bar{w}\right]\frac{dz}{A(z)}\right]. \]
Recall that
\[\theta = \omega_1 + \left(-\frac{1}{2} +ib\right)\omega_2,\]
\[\theta_2 = \omega_2 + \left[\frac{1}{\left(\frac{1}{2} + a\right) \left(1 + 4b^2\right)} + i\frac{2b}{\left(\frac{1}{2} - a\right) \left(1 + 4b^2\right)}\right]\theta.\]
Thus in the local holomorphic coordinate system $(z,\ w)$, the coframing can be written as
\begin{align*}
\omega_1 &=\left\{ 1 + \left(-\frac{1}{2} +ib\right)\left[ \frac{1}{\left(\frac{1}{2} + a\right) (1 + 4b^2)} + i\frac{2b}{\left(\frac{1}{2} - a\right) (1 + 4b^2)}\right]\right\}\theta - \left(-\frac{1}{2} +ib\right)\theta_2\\
\omega_2 &= \theta_2 -  \left[\frac{1}{\left(\frac{1}{2} + a\right) (1 + 4b^2)} + i\frac{2b}{\left(\frac{1}{2} - a\right) (1 + 4b^2)}\right]\theta.
\end{align*}

\subsubsection{ $a =\frac{1}{2},\ b =0$}
In this case, we can choose $(a, b)$ such that 
\[\lim_{\substack{a \to\frac{1}{2} \\ b \to 0}}\frac{2b}{\Big(\frac{1}{2} - a\Big) (1 + 4b^2)} = 0\]
Then all the formula in the case $a \neq\tfrac{1}{2}$ applies to this case $a =\tfrac{1}{2},\ b =0$. The local coordinate representation of the coframing is
\begin{align*}
\omega_1 &=\left(\frac{1}{2} +ib\right)\theta - \left(-\frac{1}{2} +ib\right)\theta_2\\
\omega_2 &= \theta_2 - \theta.
\end{align*}

\subsubsection{ $a =\frac{1}{2},\ b \neq 0$}
The structure equation is
\begin{equation}\label{eq:92}
d\omega_2 = \omega_1\wedge\bar{\omega}_1 + \left(-\frac{1}{2} +ib\right)\omega_1\wedge\omega_2 + \left(-\frac{1}{2} +ib\right)\omega_2\wedge\bar{\omega}_1.
\end{equation}
Substituting $\theta = \omega_1 +(-\frac{1}{2} +ib)\omega_2$ into (\ref{eq:92}) yields
\begin{align*}
d\omega_2 &= \Big[\theta - \Big(-\frac{1}{2} +ib\Big)\omega_2 \Big]\wedge\Big[\bar{\theta} - \Big(-\frac{1}{2} - ib\Big)\bar{\omega}_2 \Big]\\
         &\ \ \ \  + \Big(\frac{1}{2} +ib\Big)\theta\wedge\omega_2 + \Big(-\frac{1}{2} +ib\Big)\omega_2\wedge\Big[\bar{\theta} - \Big(-\frac{1}{2} - ib\Big)\bar{\omega}_2 \Big] \\
          &= \theta\wedge\bar{\theta} + \Big(\frac{1}{2} +ib\Big)\theta\wedge\bar{\omega}_2 + \Big(-\frac{1}{2} +ib\Big)\theta\wedge\omega_2.
\end{align*}
So $\langle\omega_2, \theta\rangle$ is a Frobenius system.
Write $\omega_2 = \gamma + i\delta$. Then
\[d(\gamma + i\delta) = \frac{-2i}{A(z)^2}dx\wedge dy + \frac{i}{A(z)}(dx + i dy)\wedge(-\delta + 2b\gamma).\]
Hence
\begin{equation*}
\begin{aligned}
d\gamma &= -\frac{1}{A(z)}dy\wedge(-\delta + 2b\gamma)\\
d\delta &= -\frac{2}{A(z)}dx\wedge dy + \frac{1}{A(z)}dx\wedge(-\delta + 2b\gamma)\, .
\end{aligned}
\end{equation*}
So $\langle\gamma,\ dy\rangle$ and $\langle\delta,\ dx\rangle$ are two Frobenius systems. There exist functions $p,\ q,\ r,\ s$ and $u,\ v$ such that
\begin{align*}
\gamma &= p du + q dy\\
\delta &= r dv + sdx.
\end{align*}
From the structure equation,
\begin{align*}
d\gamma &= dp\wedge du + dq\wedge dy\\
        &= p_x dx\wedge du + p_y dy\wedge du + p_v dv\wedge du + q_x dx\wedge dy + q_u du\wedge dy + q_v dv\wedge dy\\
        &= -\frac{1}{A(z)} dy\wedge(-rdv - sdx + 2bpdu).
\end{align*}
So
\begin{equation}\label{eq:93}
\begin{aligned}
p_y -q_u &= \frac{2b p}{x + 2by},\\
p_x &= 0,\\
p_v &= 0,\\
q_v &= \frac{r}{x + 2by},\\
q_x &= \frac{s}{x +2by}.
\end{aligned}
\end{equation}
Since there are ambiguities for choosing $p$ and $u$, by modifying $u$ we can arrange $p = 1$.

From the structure equation,
\begin{align*}
d\delta &= dr\wedge dv + ds\wedge dx\\
        &=  r_x dx\wedge dv + r_y dy\wedge dv + r_u du\wedge dv + s_y dy\wedge dx + s_u du\wedge dx + s_v dv\wedge dx\\
        &=-\frac{2}{A(z)^2} dx\wedge dy + \frac{1}{A(z)} dx\wedge(-r dv + 2bp du + 2bq dy).
\end{align*}
Then the functions $r$ and $s$ must satisfy
\begin{equation}\label{eq:94}
\begin{aligned}
r_x - s_v &= -\frac{r}{-x -2by},\\
r_y &= 0,\\
r_u &= 0,\\
s_y &= \frac{2}{(x + 2by)^2} + \frac{2b}{x + 2by}q,\\
s_u &= -\frac{2bp}{-x -2by}.
\end{aligned}
\end{equation}
Since there is ambiguity for choosing $r$ and $v$, by modifying $v$ we can arrange $r = 1$.

Thus  (\ref{eq:93}) and (\ref{eq:94}) reduces to

\begin{equation*}
\begin{aligned}
q_u &= -\frac{2b}{x + 2by},\\
q_v &= \frac{1}{x + 2by},\\
q_x &= \frac{s}{x +2by},\\
s_v &= -\frac{1}{x + 2by},\\
s_y &= \frac{2}{(x + 2by)^2} + \frac{2b}{x + 2by}q,\\
s_u &= \frac{2b}{x +2by}.
\end{aligned}
\end{equation*}
Thus there exist functions $C_1(x, y)$ and $C_2(x, y)$ such that
\[q = \frac{1}{A(z)}(2b u - v) + C_1(x, y)\]
and
\[s = -\frac{1}{A(z)}(2b u - v) + C_2(x, y).\]
The functions $C_1(x, y)$ and $C_2(x, y)$ satisfy
\begin{equation*}
\begin{aligned}
C_{1x} &= \frac{1}{x + 2b y}C_2,\\
C_{2y} &= \frac{2b}{x + 2b y} C_1 + \frac{2}{(x + 2by)^2}.
\end{aligned}
\end{equation*}
Choose $C_2(x, y) = - C_1(x, y)$ such that $z$ is a local holomorphic coordinate. Then
\begin{equation*}
\begin{aligned}
C_{1x} &= -\frac{1}{x + 2b y}C_1,\\
C_{1y} &= -\frac{2b}{x + 2b y} C_1 - \frac{2}{(x + 2by)^2}.
\end{aligned}
\end{equation*}
Then
\[C_1(x, y) = -\frac{\ln(x + 2by)}{b(x + 2by)}.\]
So
\begin{align*}
\omega_2 &= \gamma + i\delta\\
         &= (p du + q dy) +i( r dv + sdx)\\
         &= d(u + iv)+ \left( \frac{1}{A(z)}(2b u - v) -\frac{\ln(x + 2by)}{b(x + 2by)}\right) d(y- ix).
\end{align*}
Let $w = u + iv$. Then
\[\omega_2 = dw -i\frac{1}{A(z)}\left[\Big(b + \frac{i}{2}\Big)w + \Big(b - \frac{i}{2}\Big)\bar{w} + \frac{1}{b}\ln(-A(z)) \right]dz.\] 
Recall that
\[\theta = \omega_1 + \left(-\frac{1}{2} +ib\right)\omega_2.\]
So we can write $\omega_1$ as
\[\omega_1 = \frac{dz}{A(z)} - \left(-\frac{1}{2} +ib\right)\omega_2.\]

\subsubsection{$a = -\frac{1}{2}$}
The structure equation is
\begin{align*}
d\omega_2 &= \Big(-\frac{1}{2} + 2 b^2 -2ib\Big)\bar{\omega}_2\wedge\omega_2 + \omega_1\wedge\bar{\omega}_1 + \Big(\frac{1}{2} -ib\Big)\omega_1\wedge\omega_2 + \Big(-\frac{3}{2} - ib\Big)\omega_2\wedge\bar{\omega}_1\\
&= \theta\wedge\bar{\theta} +  \Big(\frac{1}{2} +ib\Big) \theta\wedge\bar{\omega}_2 -2 \Big(\frac{1}{2} +ib\Big) \omega_2\wedge\bar{\theta} +  \Big(\frac{1}{2} -ib\Big)\theta\wedge\omega_2.
\end{align*}
Let $\omega_2 = \gamma + i\delta$ be the real and imaginary part decomposition. Then
\begin{equation*}
\begin{aligned}
d\gamma &= \frac{2}{A(z)}dx\wedge \gamma + \frac{1}{A(z)}dy\wedge\delta + \frac{2b}{A(z)} dy\wedge \gamma,\\
d\delta &= -\frac{2}{A(z)^2}dx\wedge dy + \frac{1}{A(z)}dx\wedge\delta + \frac{2b}{A(z)}dx\wedge\gamma + \frac{4b}{A(z)}dy\wedge\delta.
\end{aligned}
\end{equation*}
So $\langle\gamma,\ dy\rangle$ and $\langle\delta,\ dx\rangle$ are two Frobenius systems. So there exist functions $p,\ q,\ r,\ s$ and $u,\ v$ such that
\begin{equation*}
\begin{aligned}
\gamma &= p du + q dy,\\
\delta &= r dv + sdx.
\end{aligned}
\end{equation*}
From the structure equation,
\begin{align*}
d\gamma &= dp\wedge du + dq\wedge dy\\
        &= p_x dx\wedge du + p_y dy\wedge du + p_v dv\wedge du + q_x dx\wedge dy + q_u du\wedge dy + q_v dv\wedge dy\\
        &=\frac{1}{A(z)}[2dx\wedge(pdu + qdy) + dy\wedge(rdv + sdx) + 2bdy\wedge(pdu + qdy)].\\
\end{align*}
This yields
\begin{equation}\label{eq:95}
\begin{aligned}
p_y -q_u &= \frac{2b p}{A(z)},\\
p_x &= \frac{2 p}{A(z)},\\
p_v &= 0,\\
q_v &= -\frac{r}{A(z)},\\
q_x &= \frac{2q - s}{A(z)}.
\end{aligned}
\end{equation}
Since there is ambiguity for choosing $p$ and $u$, by modifying $u$ we arrange $p = \frac{1}{A(z)^2}$.

From the structure equation,
\begin{align*}
d\delta &= dr\wedge dv + ds\wedge dx\\
        &=  r_x dx\wedge dv + r_y dy\wedge dv + r_u du\wedge dv + s_y dy\wedge dx + s_u du\wedge dx + s_v dv\wedge dx\\
        &=-\frac{2}{A(z)^2} dx\wedge dy\\
        & + \frac{1}{A(z)} [dx\wedge(rdv + sdx)+ 2b dx\wedge(pdu + qdy) + 4b dy\wedge(rdv + sdx)].
\end{align*}
The functions $r$ and $s$ must satisfy
\begin{equation}\label{eq:96}
\begin{aligned}
r_x - s_v &= \frac{r}{A(z)},\\
r_y &=  \frac{4br}{A(z)},\\
r_u &= 0,\\
s_y &= \frac{2}{A(z)^2} + \frac{-2bq + 4bs}{A(z)},\\
s_u &= -\frac{2bp}{A(z)}.
\end{aligned}
\end{equation}
Since there is ambiguity for choosing $r$ and $v$, by modifying $v$ we arrange $r = \frac{1}{A(z)^2}$.

(\ref{eq:95}) and (\ref{eq:96}) reduce to
\begin{equation*}
\begin{aligned}
q_u &= -\frac{2b}{(x + 2by)^3},\\
q_v &= \frac{1}{(x + 2by)^3},\\
q_x &= \frac{2q-s}{A(z)},\\
s_v &= -\frac{1}{(x + 2by)^3},\\
s_y &= \frac{2}{(x + 2by)^2} + \frac{-2bq + 4bs}{-(x + 2by)},\\
s_u &= \frac{2b}{(x +2by)^3}.
\end{aligned}
\end{equation*}
Then there are functions $C_1(x, y)$ and $C_2(x, y)$ such that
\begin{equation}\label{eq:96}
q = \frac{1}{A(z)^3}(2b u - v) + C_1(x, y)
\end{equation}
and
\begin{equation}\label{eq:97}
s = -\frac{1}{A(z)^3}(2b u - v) + C_2(x, y).
\end{equation}
The functions $C_1(x, y)$ and $C_2(x, y)$ satisfy
\begin{equation*}
\begin{aligned}
C_{1x} &= \frac{C_2- 2C_1}{x + 2b y},\\
C_{2y} &= \frac{2bC_1 - 4b C_2}{x + 2b y}  + \frac{2}{(x + 2by)^2}.
\end{aligned}
\end{equation*}
Choose $C_2(x, y) = - C_1(x, y)$ such that $z$ is a local holomorphic coordinate. Then
\begin{equation*}
\begin{aligned}
C_{1x} &= -\frac{3}{x + 2b y}C_1,\\
C_{1y} &= -\frac{6b}{x + 2b y} C_1 - \frac{2}{(x + 2by)^2}.
\end{aligned}
\end{equation*}
We can solve $C_1(x, y)$
\[C_1(x, y) = -\frac{2y(x + by)}{(x+ 2by)^3}.\]
By (\ref{eq:96}), we get
\[q = \frac{1}{A(z)^3}(2b u - v) -2\frac{y(x + by)}{(x+ 2by)^3}.\]
By (\ref{eq:97}), we get
\[s = -\frac{1}{A(z)^3}(2b u - v) +2\frac{y(x + by)}{(x+ 2by)^3}.\]
So
\begin{align*}
\omega_2 &= \gamma + i\delta\\
         &= (p du + q dy) +i( r dv + sdx)\\
         &= \frac{1}{A(z)^2} d(u + iv)+ \Big(\frac{1}{A(z)^3}(2b u - v) -2\frac{y(x + by)}{(x+ 2by)^3}\Big) d(y- ix).
\end{align*}
Let $w = u + iv$. Then
\[\omega_2 =\frac{1}{A(z)^2} dw -i\frac{1}{A(z)^3}\Big[\Big(b + \frac{i}{2}\Big)w + \Big(b - \frac{i}{2}\Big)\bar{w}  -\frac{i}{2}(z - \bar{z})((1-ib)z + (1 + ib)\bar{z}) \Big]dz.\] 
Recall that
\[\theta = \omega_1 +\left(-\frac{1}{2} +ib\right)\omega_2,\]
so we can write $\omega_1$ such that
\[\omega_1 = \frac{dz}{A(z)} - (-\frac{1}{2} +ib)\omega_2\]

\subsubsection{Compact case of type $C5$}
From the structure equation, 
\[d(\omega_1\wedge\bar{\omega}_2\wedge\omega_2) = (1 - 2a)(1 + 2b i)\omega_1\wedge\bar{\omega}_1\wedge\omega_2\wedge\bar{\omega}_2\neq 0.\]

\noindent If $a \neq \frac{1}{2}$, there is  no compact quotient of type $C5$. In the following analysis, we assume $a = \frac{1}{2}$. 

Recall that
\begin{equation}\label{eq:77}
\begin{aligned}
d\theta &= \Big(\frac{1}{2} +ib\Big)\bar{\theta}\wedge\theta,\\
d\omega_2 &=  \theta\wedge\bar{\theta} + \left(\frac{1}{2} +ib\right)\theta\wedge\bar{\omega}_2 + \left(-\frac{1}{2} +ib\right)\theta\wedge\omega_2.
\end{aligned}
\end{equation}

Let $\theta = \alpha + i\beta$ and $\omega_2 = \gamma + i\delta$ be the real and imaginary part decompositions. By (\ref{eq:77}), 
\begin{equation*}
\begin{aligned}
d\alpha &= -2b\alpha\wedge\beta,\\
d\beta &= \alpha\wedge\beta,\\
d\gamma &= \beta\wedge\delta - 2b\beta\wedge\gamma,\\
d\delta &= -2\alpha\wedge\beta - \alpha\wedge\delta + 2b\alpha\wedge\gamma.
\end{aligned}
\end{equation*}
Let $e_1, e_2, e_3, e_4$ be left-invariant vector fields dual  to the left-invariant forms $\alpha, \beta, \gamma, \delta$, respectively. The nontrivial brackets are
\begin{align*}
[e_1, e_2] &= -2be_1 + e_2 -2e_4,\\
[e_1, e_3] &= 2be_4,\\
[e_1, e_4] &= -e_4,\\
[e_2, e_3] &= -2be_3,\\
[e_2, e_4] &= e_3.
\end{align*}
Make a basis change: $\tilde e_3 = e_3 + 2be_4, \tilde e_2 = e_2 -2be_1$. Then
\begin{align*}
[e_1, \tilde e_3] &= 0,\\
[e_2, \tilde e_3] &= 0.\\
\end{align*}
After dropping the tildes, the nontrivial brackets are
\begin{align*}
[e_1, e_2] &= e_2 - 2e_4,\\
[e_1, e_4] &= -e_4,\\
[e_2, e_4] &= e_3.
\end{align*}
Define $\tilde e_2 = e_2 - e_4$. Then
\begin{align*}
[e_1, \tilde e_2] &= \tilde e_2,\\
[e_1, e_4] &= -e_4,\\
[\tilde e_2, e_4] &= e_3.
\end{align*}
Then define $X_1 = -e_3, X_2 = e_4, X_3 = \tilde e_2, X_4 = e_1$. The nontrivial brackets are
\begin{align*}
[X_2, X_3] &= X_1,\\
[X_2, X_4] &= X_2,\\
[X_3, X_4] &= -X_3.
\end{align*}
By the classification result of solvmanifolds \cite{MR3480018}, there exists a co-compact lattice $\Gamma$ such that $G/\Gamma$ is compact and supports a homogeneous complex Engel structure.

\subsection{Homogeneous Case C6}
In this case, the symmetry groups of the coframing are different for different parameters $a$ and $b$. We will consider the following 2 cases:
\begin{enumerate}
\item $a = -\frac{\pi}{2} + 2k\pi,\ k\in \mathbb{Z}$.

In this case, the constants $(p_1, p_2, q_1, q_2, r_1, r_2)$ are 
 $(\frac{1}{2} - ib,\ 0,\ 2bi,\ \frac{1}{2}ib(2b + i)^2,\ -b(-2b + i),\ -\frac{1}{4}b(-2b + i)(2b + i)^2)$. This is a special case of homogeneous case C3, with $a= b^2$. There is no compact quotient that can support a homogeneous complex Engel structure unless $b = 0$. Under this condition, this is a special case of case $C1$.

 \item $a \neq -\frac{\pi}{2} + 2k\pi,\ k\in \mathbb{Z}$.

By the structure equation, we get
\begin{align*}
d(\bar{\omega}_1\wedge\omega_2\wedge\bar\omega_2) &= -2\left(\left(b + \tfrac{1}{2}i\right)\cos{a} + \left(-\tfrac{1}{2} + bi\right)\left( \sin{a} + 1\right)\right)\omega_1\wedge\bar{\omega}_1\wedge\omega_2\wedge\bar\omega_2,\\
d(\omega_1\wedge\bar\omega_1\wedge\bar\omega_2) &= -\tfrac{1}{2}\left(-\sin{a} + 2b\cos{a} - 1\right)\left(4b^2 - 1 + 4bi\right)\omega_1\wedge\bar{\omega}_1\wedge\omega_2\wedge\bar\omega_2.
\end{align*}
It is easy to verify that $d(\bar{\omega}_1\wedge\omega_2\wedge\bar\omega_2) = d(\omega_1\wedge\bar\omega_1\wedge\bar\omega_2) = 0$ if and only if $a = -\frac{\pi}{2} + 2k\pi,\ k\in \mathbb{Z}$. But this is contradictory to our assumption. Thus the volume form is exact in this case. By Stokes' Theorem, there does not exist compact quotient that supports a homogeneous complex Engel structure. 

\end{enumerate}

\newpage

\end{document}